\documentclass[11pt]{article}

\usepackage{graphicx}
\usepackage{amsmath,amssymb,amsfonts,amsthm}
\usepackage{mathrsfs}
\usepackage{xcolor}
\usepackage{mathtools}
\usepackage{booktabs}
\usepackage{algorithm}
\usepackage{algorithmicx}
\usepackage{algpseudocode}
\usepackage{hyperref}
\hypersetup{hidelinks}
\usepackage{geometry}
\usepackage{multirow}

\usepackage{float}

\newcommand{\la}{\langle}
\newcommand{\ra}{\rangle}

\newcommand{\Rbb}{\mathbb{R}}

\newcommand{\aff}{{\rm aff}}

\newcommand{\dww}{\Delta \ww}
\newcommand{\dyy}{\Delta y}

\newcommand{\mE}{\mathcal{E}}

\newcommand{\mM}{\mathcal{M}}

\newcommand{\mP}{\mathcal{P}}
\newcommand{\mQ}{\mathcal{Q}}
\newcommand{\mT}{\mathcal{T}}

\newcommand{\dist}{{\rm dist}}

\newcommand{\one}{\mathbf{1}}

\newcommand{\tIm}{{\rm Im}}
\newcommand{\tNu}{{\rm Null}}

\newcommand{\supp}{{\rm supp}}
\DeclareMathOperator*{\argmin}{arg\,min}

\newcommand{\nuk}{k}
\newcommand{\ww}{w}

\usepackage{tikz}
\usetikzlibrary{arrows.meta}
\usetikzlibrary{shapes.geometric, arrows}
\tikzset{global scale/.style={
    scale=#1,
    every node/.append style={scale=#1}
  }
}
\tikzstyle{blank} = [diamond, aspect = 3, text centered]
\tikzstyle{global} = [rectangle, rounded corners, minimum width = 2cm, minimum height=1cm, text centered, draw = black, fill = blue!40]
\tikzstyle{point} = [rectangle, rounded corners, minimum width = 2cm, minimum height=1cm, text centered, draw = black, fill = green!40]
\tikzstyle{ypoint} = [rectangle, rounded corners, minimum width = 2cm, minimum height=1cm, text centered, draw = black, fill = yellow!20]
\tikzstyle{arrow} = [thick, ->, >=latex, draw = black]

\tikzstyle{arrowback} = [thick, ->, >=latex, draw = blue]
\tikzstyle{equal} = [thick, <->, >=latex, draw = black]

\theoremstyle{plain}
\newtheorem{theorem}{Theorem}
\newtheorem{lemma}{Lemma}
\newtheorem{proposition}{Proposition}
\theoremstyle{definition}
\newtheorem{definition}{Definition}
\newtheorem{example}{Example}
\theoremstyle{remark}
\newtheorem{remark}{Remark}

\begin{document}

\title{Semismooth Newton methods for degenerate polyhedral projection}
\author{
Chao Ding\thanks{State Key Laboratory of Mathematical Sciences, Academy of Mathematics and Systems Science, Chinese Academy of Sciences, Beijing 100190, P.R. China; School of Mathematical Sciences, University of Chinese Academy of Sciences, Beijing 100049, P.R. China; Institute of Applied Mathematics, Academy of Mathematics and Systems Science, Chinese Academy of Sciences, Beijing 100190, P.R. China. Email: \texttt{dingchao@amss.ac.cn}.}
\and
Fuxiaoyue Feng\thanks{Institute of Applied Mathematics, Academy of Mathematics and Systems Science, Chinese Academy of Sciences, Beijing, P.R. China; School of Mathematical Sciences, University of Chinese Academy of Sciences, Beijing 100049, P.R. China. Email: \texttt{fengfuxiaoyue@amss.ac.cn}.}
\and
Xudong Li\thanks{School of Data Science, Fudan University, Shanghai 200433, P.R. China. Email: \texttt{lixudong@fudan.edu.cn}.}
}

\date{This version: \today}

\maketitle

	\begin{abstract}
		
		In this paper, we study dual semismooth Newton (SSN) methods for degenerate
		polyhedral projection problems, where generalized Jacobians of the dual
		residual may remain singular even arbitrarily close to the solution set.
		Rather than regularizing these singular systems, we exploit the nonuniqueness
		of the dual representation. We introduce a primal--dual lifted
		projection-equivalent set that always possesses extreme points without
		additional structural assumptions on the polyhedron, and show that its
		extreme-point geometry identifies dual representatives at which nonsingular
		generalized Jacobians of the dual residual can be constructed. This geometry is further
		linked to a full-column-rank condition and a generalized weak strict Robinson constraint
		qualification, showing that the regularity required by the Newton step can be
		recovered rather than imposed \emph{a priori}. We also establish displacement
		bounds that connect representative selection throughout the algorithm with
		the local Newton mechanism. Building on this variational framework, we
		develop an inexact dual SSN method with local superlinear convergence and a
		globalized version combining monotone representative selection with a
		Wolfe line search. The resulting method is globally convergent and eventually
		recovers the fast local rate. Numerical experiments on regularized optimal
		transport, battery-scheduling feasibility restoration, and occupation-measure
		projection demonstrate its robustness in highly degenerate settings.

		\medskip

		\noindent\textbf{Keywords:} Semismooth Newton methods, polyhedral projection, degeneracy, nonsingular generalized Jacobian.
	\end{abstract}

\section{Introduction}\label{section:intro}

Given $c\in \Rbb^n$, $b\in\Rbb^{m_E}$, and $A\in\Rbb^{m_E\times n}$, we consider the polyhedral projection problem
\begin{equation}\label{prob:K-lin-proj}
	\begin{array}{cl}
		\min & \displaystyle \frac{1}{2} \| x-c\|^2 \\ [5pt]
		\mbox{s.t.} & A x = b, \\ [2pt]
		& x\in K,
	\end{array}
\end{equation}
where $K = \{x\in \Rbb^n\mid Gx \ge g\}$ is a nonempty polyhedron with $G\in \Rbb^{m_I \times n}$ and $g\in \Rbb^{m_I}$. We assume that the feasible
set of~\eqref{prob:K-lin-proj} is nonempty. As a strongly convex quadratic program, problem~\eqref{prob:K-lin-proj} admits a unique optimal solution, denoted by $x^*$.  Redundant equality constraints can be
removed, so we take $A$ to have full row rank. We also assume that $K$ is simple in the sense that the metric projection onto $K$, denoted by $\Pi_K$, and an associated multiplier can be computed efficiently. Typical examples of $K$ include the nonnegative orthant, box constraints, and the simplex. 

From an algorithmic perspective, problem~\eqref{prob:K-lin-proj} serves as a basic computational primitive in a variety of optimization algorithms, including augmented Lagrangian methods, sequential quadratic programming, projected gradient methods, and splitting algorithms, in which strongly convex quadratic subproblems with linear side constraints frequently arise~\cite{NocedalWright2006,Bertsekas2016Nonlinear,li2018efficiently,LiSunToh2020BirkhoffJacobian}.
Beyond its algorithmic role, problem~\eqref{prob:K-lin-proj} also appears in a broad class of applications. Complicated coupling relations, such as conservation, balance, assignment, or other side constraints, are represented by the linear system \(Ax=b\), while simple feasibility requirements, including nonnegativity, box-type restrictions, generalized box constraints, simplex constraints, and Cartesian products thereof, are absorbed into the set \(K\). We next describe three representative examples.

The first example is quadratically regularized optimal transport:
\begin{equation}\label{prob:app-ot}
\min_{X\in\Rbb^{M \times N}}
\left\{
\langle C,X\rangle + \frac{\rho}{2}\|X\|_F^2
\;\middle|\;
X\one_N = \omega,\;\;
X^\top \one_M = \nu,\;\;
X\ge0
\right\},
\end{equation}
where $C\in\mathbb{R}^{M\times N}$ is a cost matrix, $\omega\in\mathbb{R}^M_+$ and $\nu\in\mathbb{R}^N_+$ are prescribed vectors satisfying $\mathbf{1}_M^\top \omega = \mathbf{1}_N^\top \nu$, and $\rho>0$ is a regularization parameter. After vectorization, this problem fits exactly into~\eqref{prob:K-lin-proj} with $K=\Rbb^{MN}_+$. 
Optimal transport and its regularized variants have become fundamental tools in machine learning, computer vision, and data science~\cite{Cuturi2013,PeyreCuturi2019,CourtyEtAl2017}. In particular, quadratic regularization is well suited for obtaining transport plans that are both computationally tractable and structurally informative, especially in settings where sparsity of the transport plan is desirable~\cite{BlondelEtAl2018,LorenzMannsMeyer2021,nutz2025quadratically}. Model~\eqref{prob:app-ot} also covers the nearest doubly stochastic matrix problem~\cite{FogelEtAl2013,LiSunToh2020BirkhoffJacobian}, where the set of permutation matrices is relaxed to the Birkhoff polytope. A second example comes from separable resource allocation and scheduling with
cumulative or nested constraints. A typical model takes the form
\begin{equation}\label{eq:gen-box}
\min_{x\in\Rbb^n}
\left\{
\frac12\|x-c\|^2
\;\middle|\;
A x = b,\;
l \le Bx \le u
\right\},
\end{equation}
where $-\infty \le l < u \le +\infty$.
Here, $Ax=b$ imposes global balance requirements, while $B$, assumed to have full row rank, encodes cumulative or nested linear aggregates. Such structures arise, for example, in production-sales planning with time-dependent inventory bounds and related models with investment ranges~\cite{Tamir1980,ShiouraShakhlevichStrusevich2016,VidalGribelJaillet2019}, as well as in battery scheduling for decentralized energy management in smart grids~\cite{SchootUiterkampHurinkGerards2021}. A third important family of applications arises when $K$ is chosen as the simplex
\[
\Delta_n:=\{x\in\mathbb{R}^n \mid \mathbf{1}_n^\top x=1, \ x\ge 0 \},
\]
or, more generally, as a Cartesian product of simplices. In this case, problem~\eqref{prob:K-lin-proj} covers Euclidean projection and quadratically regularized subproblems for estimating probability or abundance vectors. 
Such models appear in topic models~\cite{BleiNgJordan2003}, hyperspectral
unmixing~\cite{KeshavaMustard2002,BioucasDiasEtAl2012}, and convex multi-class
labeling~\cite{LellmannEtAl2009}. 

Problem~\eqref{prob:K-lin-proj} can be addressed by a broad range of numerical methods. These include first-order approaches, such as dual gradient-type methods~\cite{HagerZhang2016Projection} and splitting methods~\cite{gabay1983chapter,glowinski1975approximation}, as well as second-order approaches, such as interior-point methods~\cite{wright1997primal,nesterov1994interior}, active-set methods~\cite{NocedalWright2006,goldfarb1983numerically}, dual quasi-Newton methods~\cite{Malick2004DualSDLS}, and dual semismooth Newton (SSN) methods~\cite{BaiChuSun2007DualIQEP,LiSunToh2020BirkhoffJacobian,WangShenZhangYang2023DGASS}. Numerical comparisons reported in~\cite{HagerZhang2016Projection,LiSunToh2020BirkhoffJacobian,WangShenZhangYang2023DGASS} indicate that second-order methods can generally achieve faster convergence and higher accuracy than first-order alternatives within comparable computational budgets. Among these approaches, dual SSN methods are particularly attractive due to their excellent practical performance and local superlinear or quadratic convergence guarantees. The fast local convergence theory of second-order methods, however, typically relies on method-dependent regularity assumptions. Depending on the particular algorithm and formulation, these assumptions may take the form of the linear independence constraint qualification (LICQ), strict complementarity, strong regularity, or the nonsingularity of relevant KKT matrices or generalized Jacobians~\cite{qi1993nonsmooth,Qi1993convergence,wright1997primal,Dontchev1998,BonnansShapiro2000,NocedalWright2006,LiSunToh2020BirkhoffJacobian,WangShenZhangYang2023DGASS}. When such regularity conditions fail, Lagrange multipliers may become nonunique, and the linear systems arising in Newton-type methods may become severely ill-conditioned or singular, leading to significant degradation of numerical performance. Such degeneracy is difficult to avoid in the applications described above. The coupling constraints \(Ax=b\) may interact with many active inequalities defining \(K\), especially when the solution lies on a low-dimensional face of the feasible set. For example, in quadratically regularized optimal transport with a small regularization parameter, the solution may be close to a sparse optimal solution of the underlying transportation linear program; in generalized-box models, multiple nested or cumulative bounds may become simultaneously active; and in simplex or product-simplex models, many components may vanish. In these situations, the active inequality gradients together with the equality-constraint gradients may become linearly dependent, causing LICQ to fail.

Motivated by these challenges, we focus on dual SSN methods in degenerate polyhedral projection problems. Specifically, we consider the following dual formulation of problem~\eqref{prob:K-lin-proj}:
\begin{equation}\label{prob:dual-K-lin-proj}
	\min_{y\in\Rbb^{m_E}}
	\varphi(y)
	:=
	-\langle b,y\rangle
	+
	\inf_{z\in\Rbb^n}
	\left\{
	\delta_K^*(z)
	+
	\frac{1}{2}\|z-(A^\top y+c)\|^2
	\right\},
\end{equation}
where \(\delta_K\) denotes the indicator function of \(K\), and
\(
\delta_K^*(z):=\sup_{x\in K}\langle z,x\rangle
\)
denotes its Fenchel conjugate.
Standard convex analysis \cite[Theorem 31.5]{rockafellar1997convex} shows that \(\varphi\) is convex with a globally Lipschitz continuous gradient:
\begin{equation}\label{eq:grad-dual}
\nabla\varphi(y)
=
A\Pi_K(A^\top y+c)-b.
\end{equation}
Since \(K\) is polyhedral, its metric projector \(\Pi_K\) is piecewise affine. Consequently, \(\nabla\varphi\) is also piecewise affine and hence strongly semismooth~\cite{Qi1993convergence,qi1993nonsmooth}. Therefore, dual SSN methods provide a natural high-order approach for solving the dual optimality equation
\[
\nabla\varphi(y)=0.
\]
The remaining difficulty is that, in degenerate regimes, the generalized Jacobians involved in the SSN steps may become singular even arbitrarily close to the solution set, so that the classical local superlinear or quadratic convergence theory is no longer directly applicable. Common numerical remedies, such as regularization or random perturbation, may improve the solvability of individual Newton systems, but they do not provide a structural resolution of this degeneracy.

The central observation of this paper is that the singularity of the generalized Jacobians used in a dual SSN method need not be an intrinsic defect of the primal projection. Indeed, the same projection
\begin{equation}\label{eq:primal-proj}
    \Pi_K(A^\top y+c)
\end{equation}
may admit multiple dual representatives \(y\). Any two such representatives generate the same residual in~\eqref{eq:grad-dual}, while the associated Han--Sun generalized Jacobians \cite{han1997newton,LiSunToh2020BirkhoffJacobian} may have different nonsingularity properties. This observation suggests a new strategy, i.e., instead of modifying the Newton system through regularization or perturbation, one may replace a degenerate dual representative by a suitable nondegenerate one without changing the primal projection or the dual residual. To turn this observation into a rigorous algorithmic framework, three fundamental questions must be addressed. First, for every given projected point in the form of \eqref{eq:primal-proj}, does there exist an associated dual representative with a nonsingular generalized Jacobian? Second, can such a representative be identified while quantitatively controlling the possible resulting displacement, so that the local Newton mechanism is preserved?
Third, can such a selection be integrated into a globally convergent framework while retaining the fast local convergence behavior of semismooth Newton methods?

We answer the first question by introducing the primal--dual lifted projection-equivalent set $\mathcal Q(y,w)$, whose elements are pairs
of dual representatives $y$ and associated projection multipliers $w$. We show that this lifted set always has extreme points, and at the $y$-component of every extreme pair,
a nonsingular generalized Jacobian of $\nabla\varphi$ can be constructed.
We further connect this
extreme-point geometry with a full-column-rank condition and a generalized weak strict
Robinson constraint qualification (W-SRCQ) introduced
in~\cite{feng2024quadratically}. Thus, the nonsingularity needed by the Newton step can be recovered
from the geometry of the lifted equivalence class rather than imposed as an \emph{a priori} assumption. To answer the second question, we establish quantitative displacement bounds
for the selected extreme-point representatives. Importantly, the selection
can be performed at any iterate and does not require proximity to the
solution set, allowing the dual representative to be modified throughout
the global phase. Once the iterates approach the solution set, the
displacement bounds ensure that the selected representatives remain
quantitatively close to the corresponding extreme-point representatives of
the solution class. This provides the key link between the global
representative-selection mechanism and the local Newton analysis, ensuring
that the modification does not destroy the fast local convergence rate. Building on these results, we develop a dual semismooth Newton framework that
answers the third question. At each iteration, the framework identifies an
extreme-point representative through a monotone procedure, constructs a
Newton step using the resulting nonsingular  generalized Jacobian,
and employs a Wolfe line search for globalization. The resulting method is
globally convergent and eventually recovers the fast local superlinear
convergence rate. Numerical experiments on quadratically regularized optimal transport,
battery-scheduling feasibility restoration, and occupation-measure projection
demonstrate the robustness of the proposed approach in highly degenerate
settings.

A related projection-equivalence and vertex-selection viewpoint appears in
Hu et al.~\cite{hu2024semismooth}, but their analysis is tailored to the
highly structured nearest doubly stochastic matrix problem and exploits its
special orthant and bipartite-graph structure. In contrast, our framework
applies to the general polyhedral projection problem
\eqref{prob:K-lin-proj}. The two frameworks differ at three essential
levels. Geometrically, the natural purely dual projection-equivalent set for
a general polyhedron may contain lines and need not have extreme points,
which necessitates the primal--dual lift \(\mathcal Q(y,w)\).
Theoretically, our lifted framework provides a general characterization
linking extreme-point geometry, a full-column-rank condition, nonsingular
Han--Sun generalized Jacobians, and the W-SRCQ. It also
yields displacement bounds without requiring the current iterate to be
close to the optimal solution set, based on general polyhedral error-bound and
support-reduction arguments rather than graph-specific constructions.
Algorithmically, whereas the convergence theory
in~\cite{hu2024semismooth} is local and cycling outside the local regime is
handled empirically by randomization, our monotone representative-selection procedure,
combined with a Wolfe line search, yields a provably globally convergent
method while preserving the fast local rate.

The remainder of the paper is organized as follows. Section~\ref{sec:prelim} reviews the Han--Sun generalized Jacobian for polyhedral projections and the basic properties of the dual function. Section~\ref{sec:theory} develops the geometry of projection-equivalent sets, establishes the nonsingularity characterization, and derives the associated displacement bounds. Section~\ref{sec:alg} presents the local and globalized semismooth Newton methods and analyzes their convergence properties. Section~\ref{sec:Numerical} reports the numerical results, and Section~\ref{sec:conclusion} concludes the paper.

\paragraph{Notation.}
For a positive integer \(q\), let \([q]:=\{1,\ldots,q\}\).
For an index set \(J\subseteq[m_I]\), we define \(J^C:=[m_I]\setminus J\). For a matrix \(G\) and a vector \(v\),
\(G_J\) denotes the submatrix of \(G\) formed by the rows indexed by
\(J\), and \(v_J\) denotes the subvector of a vector \(v\) indexed by
\(J\). We use
$
\supp(v):=\{i\mid v_i\neq0\}.
$
For a nonempty closed set \(C\), define
\[
\dist(x,C):=\inf_{z\in C}\|x-z\|,
\qquad
\Pi_C(x):=\argmin_{z\in C}\|x-z\|.
\]
If \(C\) is in addition convex, then $\Pi_C(\cdot)$ is single-valued and Lipschitz continuous; otherwise, it may be set-valued.
We write
\(
\mathbb B(x,r):=\{z  \mid \|z-x\|<r\}
\)
for the open Euclidean ball centered at \(x \) with radius \(r\).
The symbols \(\mathcal E(C)\), \(\aff(C)\), and
\(\mathcal T_C(x)\) denote, respectively, the set of extreme points of
\(C\), the affine hull of \(C\), and the Bouligand tangent cone to \(C\)
at \(x\). Moreover, for a vector \(w\), we define its orthogonal complement by
\(
w^\perp:=\{d\mid \langle d,w\rangle=0\},
\)
and use \(\operatorname{Im}(L)\) and \(\operatorname{Null}(L)\) to denote  the
image and null space of a linear mapping \(L\), respectively. For a matrix \(L\), we use \(\sigma_{\min}(L)\) to denote its
smallest singular value and, when \(L\) is symmetric,
\(\lambda_{\min}(L)\) and \(\lambda_{\max}(L)\) to denote its
smallest and largest eigenvalues, respectively.
The space of real \(q\times q\) symmetric matrices is denoted by
\(\mathbb S^q\). The vector of all ones in \(\mathbb R^q\) is denoted
by \(\mathbf 1_q\), whereas \(e_i\) denotes the \(i\)-th standard basis
vector.
Unless otherwise specified,
\(\|\cdot\|\) denotes the Euclidean norm for vectors and the induced
spectral norm for matrices.

\section{Preliminaries}\label{sec:prelim}

In this section, we collect several preliminary results on the projector $\Pi_K$ and the dual function $\varphi$ defined in \eqref{prob:dual-K-lin-proj}. We first review the Han--Sun generalized Jacobian of $\Pi_K$ and the associated multiplier structure for polyhedral projection. We then record several basic variational properties of the dual function, which will be used later in the local and global convergence analysis.

Given $s\in\Rbb^n$, recall that $K=\{x\in\Rbb^n\mid Gx\ge g\}$. Since $\Pi_K(s)$ is the Euclidean projection of $s$ onto the polyhedron $K$, we know that there exists $\ww\in\Rbb^{m_I}$ satisfying the following Karush–Kuhn–Tucker (KKT) conditions:
\begin{equation}\label{eq:kkt-proj-K}
\left\{
\begin{aligned}
& \Pi_K(s)-s+G^\top \ww=0,\\
& G\Pi_K(s)\ge g,\quad \ww\le 0,\\
& \ww^\top(G\Pi_K(s)-g)=0.
\end{aligned}
\right.
\end{equation}
Denote the set of multipliers associated with $s$ by
\begin{equation}\label{eq:M}
\mM(s):=\left\{\ww\in\Rbb^{m_I}\mid (s,\ww)\ \mbox{satisfies \eqref{eq:kkt-proj-K}}\right\}.
\end{equation}
It follows directly from \eqref{eq:kkt-proj-K} that $\mM(s)$ is a nonempty polyhedron. Moreover, since every $\ww\in\mM(s)$ satisfies $\ww\le 0$, the set $\mM(s)$ contains no lines. Consequently, its extreme point set ${\cal E}(\mM(s))$ is nonempty \cite[Corollary 18.5.3]{rockafellar1997convex}. To this end, define
\[
I(s):=\{\,i\in[m_I]\mid G_i\Pi_K(s)=g_i\,\},
\]
the active index set at $\Pi_K(s)$, and the following family of index sets:
\begin{equation}\label{eq:D-collection}
{\cal D}(s)
:=
\left\{
J\subseteq [m_I]\ \middle|\
\exists\,\ww\in\mM(s)\ \mbox{such that } {\rm supp}(\ww)\subseteq J\subseteq I(s),\ \ G_J\ \mbox{has full row rank}
\right\}.
\end{equation}
As noted in \cite{han1997newton}, the nonemptiness of ${\cal E}({\cal M}(s))$ implies the nonemptiness of ${\cal D}(s)$. Indeed, for any $\ww\in{\cal E}(\mM(s))$, from the definition of extreme points, one has ${\rm supp}(\ww)\in{\cal D}(s)$.

Although the Bouligand Jacobian $\partial_B \Pi_K$ and the Clarke Jacobian 
$\partial_C \Pi_K$ provide useful generalized differential information for 
the projection mapping $\Pi_K$, their direct characterization and computation 
can be difficult in practice for a general polyhedron $K$. Following \cite{han1997newton}, we therefore use 
the Han--Sun generalized Jacobian as a computable substitute. Specifically, the 
Han--Sun generalized Jacobian of $\Pi_K$ at $s$ is defined by
\begin{equation}\label{eq:HS-Jac-s}
\partial_{\rm HS}\Pi_K(s)
:=
\left\{
U\in\Rbb^{n\times n}\ \middle|\
U=I_n-G_J^\top(G_JG_J^\top)^{-1}G_J,\ \ J\in{\cal D}(s)
\right\}
\end{equation}
with the convention
$
G_J^\top(G_JG_J^\top)^{-1}G_J:=0
$ if $J=\emptyset$.
Note that for each $J\in{\cal D}(s)$, the matrix
\[
U=I_n-G_J^\top(G_JG_J^\top)^{-1}G_J
\]
is the orthogonal projector onto ${\rm Ker}(G_J)$, and hence is symmetric 
positive semidefinite. The following lemma, taken from 
\cite[Lemma 2.1]{han1997newton}, justifies the use of 
$\partial_{\rm HS}\Pi_K(s)$ in place of the Bouligand and Clarke generalized 
Jacobians.

\begin{lemma}\label{lemma:semismooth_HS}
For any given $s\in\Rbb^n$, there exists a neighborhood $V$ of $s$ such that, for all $x\in V$,
\[
{\cal D}(x)\subseteq {\cal D}(s), \quad \partial_{\rm HS}\Pi_K(x)\subseteq \partial_{\rm HS}\Pi_K(s),
\]
and
\begin{equation}\label{eq:semisooth_HS}
\Pi_K(x)-\Pi_K(s)-U(x-s)=0\quad \forall\,U\in\partial_{\rm HS}\Pi_K(x).
\end{equation}
\end{lemma}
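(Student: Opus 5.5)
The plan is to reduce everything to a local combinatorial statement about active sets and multiplier supports of the projection, using only the KKT system \eqref{eq:kkt-proj-K} and polyhedrality. Fix $s$ and set $\bar x:=\Pi_K(s)$. Since $\Pi_K$ is continuous and $G_i\bar x>g_i$ for $i\notin I(s)$, there is a neighborhood $V$ of $s$ on which $I(x)\subseteq I(s)$ for all $x\in V$. This alone does not give ${\cal D}(x)\subseteq{\cal D}(s)$. We must also show that for $x$ near $s$, any $J\in{\cal D}(x)$ admits a multiplier $\tilde w\in\mM(s)$ with $\supp(\tilde w)\subseteq J$.

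For that step, take $J\in{\cal D}(x)$ with $G_J$ of full row rank and a witness $w\in\mM(x)$ with $\supp(w)\subseteq J\subseteq I(x)$. The KKT system reads $\Pi_K(x)=x-G_J^\top w_J$ with $G_J\Pi_K(x)=g_J$. Hence $w_J=(G_JG_J^\top)^{-1}(G_Jx-g_J)$ and
\[
\Pi_K(x)=x-G_J^\top(G_JG_J^\top)^{-1}(G_Jx-g_J).
\]
Define the affine map $w^J(y):=(G_JG_J^\top)^{-1}(G_Jy-g_J)$, extended by zero outside $J$. Only finitely many $J\subseteq I(s)$ exist. Shrinking $V$, I will argue by contradiction: suppose some $J$ is in ${\cal D}(x^k)$ for a sequence $x^k\to s$ but $J\notin{\cal D}(s)$. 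Pass to the limit, using continuity of $w^J$ and of $\Pi_K$:
\begin{itemize}
\item $w^J(s)\le 0$;
\item $\bar x=s-G_J^\top w^J_J(s)$;
\item $G_J\bar x=g_J$, so $J\subseteq I(s)$.
\end{itemize}
Complementarity holds because the support lies in $J\subseteq I(s)$. Thus $w^J(s)\in\mM(s)$ with support in $J$, which gives $J\in{\cal D}(s)$, a contradiction. Since the index sets are finite, one neighborhood works for all $J$, and ${\cal D}(x)\subseteq{\cal D}(s)$ follows. The inclusion $\partial_{\rm HS}\Pi_K(x)\subseteq\partial_{\rm HS}\Pi_K(s)$ is then immediate from definition \eqref{eq:HS-Jac-s}.

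For the identity \eqref{eq:semisooth_HS}, take $U=I_n-G_J^\top(G_JG_J^\top)^{-1}G_J$ with $J\in{\cal D}(x)\subseteq{\cal D}(s)$. The formula above holds at $x$ with this $J$. By the limiting argument, which here holds exactly since $J\in{\cal D}(s)$, the same representation holds at $s$:
\[
\Pi_K(s)=s-G_J^\top(G_JG_J^\top)^{-1}(G_Js-g_J).
\]
This uses that the multiplier with support in $J$ is determined uniquely by full row rank of $G_J$. Subtracting the two representations gives $\Pi_K(x)-\Pi_K(s)=U(x-s)$, because the constant $g_J$ terms cancel.

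The main obstacle is the limiting step: a given $J$ may belong to ${\cal D}(x)$ only for special $x$, so I must carefully treat each of the finitely many candidate $J$. The clean route is to characterize $J\in{\cal D}(y)$ via closed conditions on $y$:
\[
w^J(y)\le 0 \quad\text{and}\quad \Pi_K(y)=y-G_J^\top w^J_J(y).
\]
Then $\{y: J\in{\cal D}(y)\}$ is closed. For each $J\notin{\cal D}(s)$, $s$ lies outside this closed set, so it has a neighborhood avoiding it. Intersecting these finitely many neighborhoods with the one from active-set stability yields $V$.
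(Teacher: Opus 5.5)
Your proof is correct. The paper gives no argument of its own for this lemma: it imports it verbatim from Han and Sun \cite{han1997newton} (their Lemma~2.1). Your write-up is therefore a self-contained replacement for a citation, not an alternative to an in-paper proof.

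The observation you isolate is the right one. When $G_J$ has full row rank, membership $J\in{\cal D}(y)$ forces the witness multiplier to equal the affine function $w^J(y)=(G_JG_J^\top)^{-1}(G_Jy-g_J)$. Hence $\{y\mid J\in{\cal D}(y)\}=\{y\mid w^J(y)\le 0,\ \Pi_K(y)=y-G_J^\top w^J_J(y)\}$ is closed. The relation $G_J\Pi_K(y)=g_J$, and with it $J\subseteq I(y)$, follows automatically from these two conditions. This single fact yields ${\cal D}(x)\subseteq{\cal D}(s)$ through finiteness of the index sets. It also yields the exact identity \eqref{eq:semisooth_HS}, because the same affine representation of $\Pi_K$ holds at both $x$ and $s$ and the $g_J$ terms cancel.

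Two minor remarks. First, the preliminary step $I(x)\subseteq I(s)$ is redundant: the closedness argument already gives $J\subseteq I(s)$ for every $J\in{\cal D}(x)$ near $s$. Second, the case $J=\emptyset$ is covered by the paper's convention, since the closed condition then reduces to $\Pi_K(y)=y$, that is, $y\in K$.
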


The next lemma gives an explicit primal--dual characterization of all points sharing the same projection onto $K$. This representation will later serve as the basic description of projection-equivalent classes discussed in Section \ref{sec:theory}.
    
\begin{lemma}\label{lemma:representation}
For any given $s\in\Rbb^n$, let $\ww\in\mM(s)$. Then
\[
\{\,s'\in\Rbb^n\mid \Pi_K(s')=\Pi_K(s)\,\}
=
\left\{
s'\in\Rbb^n\ \middle|\
\exists\,\ww'\in\Rbb^{m_I}\ \mbox{such that } s'-s=G^\top(\ww'-\ww),\ \ \ww'\le 0,\ \ \ww'_{I(s)^C}=0
\right\},
\]
where $I(s)^C:=[m_I]\backslash I(s)$.
\end{lemma}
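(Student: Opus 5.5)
We will prove the set equality by double inclusion, using only the KKT characterization \eqref{eq:kkt-proj-K} of the projection. Set $\bar x:=\Pi_K(s)$, so that $\bar x-s+G^\top\ww=0$ with $\ww\le0$, $G\bar x\ge g$ and $\ww^\top(G\bar x-g)=0$. Since $\ww\le0$ and $G\bar x-g\ge0$, complementarity forces $\ww_i=0$ for every $i\notin I(s)$, i.e., $\ww_{I(s)^C}=0$. We will use this observation in both directions.

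For ``$\subseteq$'': take $s'$ with $\Pi_K(s')=\bar x$. We choose any $\ww'\in\mM(s')$, which is nonempty by the discussion after \eqref{eq:M}. Then $\bar x-s'+G^\top\ww'=0$, and subtracting the KKT equation for $s$ gives $s'-s=G^\top(\ww'-\ww)$. Moreover $\ww'\le0$. Because $\Pi_K(s')=\bar x$, the active set at $s'$ coincides with $I(s)$, so the same complementarity argument yields $\ww'_{I(s)^C}=0$.

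For ``$\supseteq$'': take $s'$ and $\ww'$ with $s'-s=G^\top(\ww'-\ww)$, $\ww'\le0$ and $\ww'_{I(s)^C}=0$. We claim that the pair $(\bar x,\ww')$ satisfies the KKT system for projecting $s'$. Stationarity follows from $\bar x-s'+G^\top\ww'=\bar x-s+G^\top\ww=0$. Primal feasibility $G\bar x\ge g$ and dual feasibility $\ww'\le0$ hold by construction. For complementarity, $\ww'_i(G_i\bar x-g_i)=0$ for each $i$: on $I(s)$ the second factor vanishes, and off $I(s)$ we have $\ww'_i=0$. Since the projection problem is a strongly convex QP with polyhedral constraints, the KKT conditions are sufficient for optimality. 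Hence $\Pi_K(s')=\bar x=\Pi_K(s)$.

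No step is a real obstacle. The only point that needs care is the sufficiency of the KKT conditions in the reverse direction, which holds by convexity: for any $x\in K$, $\langle s'-\bar x,x-\bar x\rangle=\langle G^\top\ww',x-\bar x\rangle=\langle\ww',Gx-g\rangle-\langle\ww',G\bar x-g\rangle=\langle\ww',Gx-g\rangle\le0$. This is exactly the variational characterization of the projection. The other point to handle correctly is that the active set is determined by $\Pi_K(s')$ itself, which is what makes the condition $\ww'_{I(s)^C}=0$ correct in the forward direction.
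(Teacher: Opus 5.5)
Your proposal is correct and follows essentially the same double-inclusion KKT argument as the paper. For ``$\subseteq$'' you subtract the stationarity equations and use complementarity at $\Pi_K(s')=\Pi_K(s)$; for ``$\supseteq$'' you check that $(\Pi_K(s),\ww')$ satisfies the KKT system for $s'$, and your explicit verification of KKT sufficiency via $\langle s'-\bar x,x-\bar x\rangle\le 0$ spells out a step the paper leaves implicit.
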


\begin{proof}
Let $x:=\Pi_K(s)$. Since $\ww\in\mM(s)$, the pair $(x,\ww)$ satisfies \eqref{eq:kkt-proj-K}, that is,
\[
x-s+G^\top\ww=0,\quad Gx\ge g,\quad \ww\le 0,\quad \mbox{and} \quad \ww^\top(Gx-g)=0.
\]

We first prove the inclusion ``$\subseteq$''. Let $s'\in\Rbb^n$ satisfy $\Pi_K(s')=x$. Then there exists $\ww'\in\Rbb^{m_I}$ such that $(x,\ww')$ satisfies the KKT system for the projection of $s'$ onto $K$, namely,
\[
x-s'+G^\top\ww'=0,\quad Gx\ge g,\quad \ww'\le 0,\quad \mbox{and} \quad (\ww')^\top(Gx-g)=0.
\]
Since the active set at $x$ is exactly $I(s)$, the complementarity relation implies $\ww'_{I(s)^C}=0$. Subtracting the two stationarity equations yields
$s'-s=G^\top(\ww'-\ww)$. Hence the right-hand side contains every $s'$ with $\Pi_K(s')=\Pi_K(s)$.

Conversely, let $s'\in\Rbb^n$ satisfy
\[
s'-s=G^\top(\ww'-\ww),\quad \ww'\le 0\quad \mbox{and} \quad \ww'_{I(s)^C}=0
\]
for some $\ww'\in\Rbb^{m_I}$. Then
\[
x-s'+G^\top\ww'
=
x-s+G^\top\ww
=
0.
\]
Moreover, since $x=\Pi_K(s)$, we have $Gx\ge g$. For $i\notin I(s)$, one has $G_ix>g_i$ and hence $(\ww')_i=0$ by assumption; for $i\in I(s)$, one has $G_ix=g_i$. Therefore,
\[
(\ww')^\top(Gx-g)=0.
\]
Thus, $(x,w')$ satisfies the KKT system for the projection of $s'$
onto $K$, and hence $\Pi_K(s')=x=\Pi_K(s)$.
\hfill $\Box$
\end{proof}

We next record several basic variational properties of the dual function $\varphi$ given in \eqref{prob:dual-K-lin-proj}. 
From \cite[Theorem 31.5]{rockafellar1997convex}, we see that for any $y\in \Rbb^{m_E}$,
\begin{equation}\label{eq:varphiy}
\begin{aligned}
    \varphi(y) ={}& -\langle b,y\rangle+\inf_z\left\{\delta_K^*(z)+\frac12\|z-(A^\top y+c)\|^2\right\} \\
={}& -\langle b,y\rangle+\frac12\|A^\top y+c\|^2-\frac12\|A^\top y+c-\Pi_K(A^\top y + c)\|^2,
\end{aligned}
\end{equation}
and $\varphi$ is convex, continuously differentiable with $\nabla \varphi$ given in \eqref{eq:grad-dual}.
Since $K$ is polyhedral, the projector $\Pi_K$ is piecewise affine and globally Lipschitz continuous \cite[Propositions 4.1.4 and 4.2.2]{facchinei2003finite}. Consequently, $\varphi$ is a convex piecewise quadratic function, and $\nabla\varphi$ is piecewise affine and globally Lipschitz continuous. More variational properties corresponding to $\varphi$ and $\nabla \varphi$ are summarized below.

\begin{lemma}\label{lemma:error-bound}
The function $\varphi$ attains its finite minimum $\varphi^* := \min_{y\in\Rbb^{m_E}}\varphi(y) < +\infty$, with the corresponding nonempty optimal set \[\mathcal{P}^* := \argmin_{y\in\Rbb^{m_E}}\varphi(y) = \{y \in \Rbb^{m_E} \mid \nabla \varphi(y) = 0\}.\] 
For any $r_0 > 0$, there is a constant $C_0 > 0$ such that
\begin{equation}\label{eq:quad-growth}
\varphi(y) - \varphi^* \ge C_0\,\dist^2(y,\mathcal{P}^*) \quad \text{whenever } \varphi(y) \le \varphi^* + r_0.
\end{equation}
Additionally, there exist  positive constants $C_1, C$, and $r$ such that 
\begin{equation}\label{eq:quad-bound}
\varphi(y) - \varphi^* \le C_1 \,\dist^2(y,\mathcal{P}^*) \quad \forall\,y\in\Rbb^{m_E},
\end{equation}
and 
\begin{equation}\label{eq:error-bound}
\dist(y,\mathcal{P}^*) \le C \,\|\nabla\varphi(y)\| \quad \text{whenever } \|\nabla\varphi(y)\| \le r.
\end{equation}
\end{lemma}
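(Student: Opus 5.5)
The plan is to treat $\varphi$ as a convex piecewise quadratic function and get all three claims from its polyhedral structure, together with the existence of Lagrange multipliers for the primal problem.

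\emph{Attainment of the minimum.} The primal problem \eqref{prob:K-lin-proj} is a strongly convex QP with nonempty polyhedral feasible set and affine constraints $Ax=b$, $Gx\ge g$. Hence it has the unique solution $x^*$, and KKT multipliers $(y^*,w^*)$ exist with no constraint qualification needed, since all constraints are linear. The multiplier $y^*$ for $Ax=b$ then satisfies $x^*=\Pi_K(A^\top y^*+c)$, with the sign convention matching \eqref{eq:grad-dual}, so $\nabla\varphi(y^*)=Ax^*-b=0$. Because $\varphi$ is convex and $C^1$, $\mathcal P^*=\{y\mid\nabla\varphi(y)=0\}\neq\emptyset$ and $\varphi^*$ is finite. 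Strong duality also gives $\varphi^*=-(\text{primal optimal value})$, up to a constant.

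\emph{Structure of $\mathcal P^*$.} For any $y\in\mathcal P^*$, the point $\Pi_K(A^\top y+c)$ is primal feasible and satisfies the KKT conditions, so it equals $x^*$. Lemma~\ref{lemma:representation}, applied with $s^*=A^\top y^*+c$, then shows
\[
\mathcal P^*=\{y\mid \exists\,w'\le0,\ w'_{I(s^*)^C}=0,\ A^\top(y-y^*)=G^\top(w'-w^*)\},
\]
which is a polyhedron, being the projection of a polyhedron.

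\emph{The error bound \eqref{eq:error-bound}.} Since $\nabla\varphi$ is piecewise affine, Robinson's result on polyhedral multifunctions applies: $(\nabla\varphi)^{-1}$ is a polyhedral multifunction and is therefore locally upper Lipschitz at $0$. Concretely, there exist $r,C$ with $(\nabla\varphi)^{-1}(v)\subseteq(\nabla\varphi)^{-1}(0)+C\|v\|\mathbb B$ whenever $\|v\|\le r$, which is exactly \eqref{eq:error-bound}.

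\emph{The upper bound \eqref{eq:quad-bound}.} Let $L$ be the Lipschitz constant of $\nabla\varphi$ and take $\bar y=\Pi_{\mathcal P^*}(y)$, which exists because $\mathcal P^*$ is closed and convex. The descent lemma gives
\[
\varphi(y)-\varphi(\bar y)\le\langle\nabla\varphi(\bar y),y-\bar y\rangle+\tfrac L2\|y-\bar y\|^2=\tfrac L2\,\dist^2(y,\mathcal P^*),
\]
so \eqref{eq:quad-bound} holds with $C_1=L/2$.

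\emph{The quadratic growth \eqref{eq:quad-growth}.} This is the main obstacle, because it must hold on the whole sublevel set, not only near $\mathcal P^*$. I would proceed in three steps.
\begin{itemize}
\item[(a)] First get local growth near $\mathcal P^*$. By a Luo--Tseng type argument, the local error bound plus convexity and the $C^{1,1}$ property imply quadratic growth in the region $\|\nabla\varphi\|\le r$. Alternatively, use the known equivalence, for convex functions, between metric subregularity of $\partial\varphi$ at $0$ and quadratic growth.
\item[(b)] Next handle points where $\|\nabla\varphi(y)\|>r$ but $\varphi(y)\le\varphi^*+r_0$. Because $\varphi$ is convex piecewise quadratic, Li's global error bound for convex piecewise quadratics gives $\dist(y,\mathcal P^*)\le\kappa\big((\varphi(y)-\varphi^*)+(\varphi(y)-\varphi^*)^{1/2}\big)$. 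On the set where $\varphi-\varphi^*\le r_0$ this becomes $\dist\le\kappa'(\varphi-\varphi^*)^{1/2}$, which is the claim.
\item[(c)] If I wanted to avoid citing Li's bound, I would argue directly. Outside the region $\|\nabla\varphi\|\le r$, convexity gives $\varphi(y)-\varphi^*\ge$ a positive constant times $\dist(y,\mathcal P^*)$, obtained by examining $\varphi$ along the segment to $\bar y$ with its slope bounded below. Then $\dist\le c(\varphi-\varphi^*)\le c\sqrt{r_0}\,(\varphi-\varphi^*)^{1/2}$.
\end{itemize}
Making the linear growth in step (c) rigorous is the delicate part. I would carry it out with the piecewise structure: only finitely many quadratic pieces occur, and each has polyhedral domain.
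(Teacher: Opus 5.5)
Your proposal is correct and follows the paper's proof. The paper gets attainment from linear-constraint duality, \eqref{eq:error-bound} from Robinson's result on polyhedral multifunctions, \eqref{eq:quad-bound} from the descent lemma, and \eqref{eq:quad-growth} from Li's global error bound for convex piecewise quadratics; your step (b) is exactly the paper's citation of \cite[Theorem 2.7]{li1995error}, so steps (a) and (c) are unnecessary. If you did want the self-contained route, (c) needs only convexity, not the piecewise structure. Quadratic growth on a small sublevel set, combined with the nondecreasing secant slope of $\varphi$ along $[\Pi_{\mathcal P^*}(y),y]$, already gives linear growth beyond that sublevel set.
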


\begin{proof}
    The finiteness of $\varphi^*$ and the nonemptiness of $\mathcal{P}^*$ follow from the duality results \cite[Corollaries 28.2.2 and 28.4.1]{rockafellar1997convex} for the projection problem \eqref{prob:K-lin-proj}.
    
    Since $\varphi$ is convex piecewise quadratic, the estimate \eqref{eq:quad-growth} follows from \cite[Theorem 2.7]{li1995error}. The quadratic upper bound \eqref{eq:quad-bound}  is due to the global Lipschitz continuity of $\nabla\varphi$ \cite[Lemma 1.2.3]{nesterov2013introductory}. Finally, since $\nabla\varphi$ is a piecewise affine map and hence a polyhedral multifunction, the local error bound \eqref{eq:error-bound} follows from  \cite[Equation (5)]{robinson1981some}. \hfill $\Box$
\end{proof}

\section{Variational properties of the projection-equivalent set}\label{sec:theory}

This section develops the variational theory underlying our semismooth
Newton framework without assuming generalized-Jacobian nonsingularity
at the solution.
The central objective is to understand, for the dual residual map
\begin{equation}\label{eq:KKT-proj-K-linear}
\nabla \varphi(y) = -b + A \Pi_{K}(A^\top  y + c), \quad y \in \Rbb^{m_E},
\end{equation}
how degeneracy can be characterized and resolved through the geometry of projection-equivalent classes. To this end, we introduce a primal--dual lifted projection-equivalent set and establish its connection with nonsingular generalized Jacobians of $\nabla\varphi$. We then derive error estimates for the associated correction step, which will later provide the key bridge from the variational theory developed here to the local and global convergence analysis in Section~\ref{sec:alg}.

\subsection{Equivalent conditions for nonsingular generalized Jacobians}\label{sec:sub-equi-condi}

Recall that throughout this paper, the matrix $A$ is assumed to have full row rank. We begin by introducing the central geometric object of this paper. For any given $y\in \mathbb{R}^{m_E}$ and $\ww\in \mM(A^\top y+c)$, define the primal--dual lifted projection-equivalent set
\begin{equation}\label{eq:mQ-y-ww}
\mQ(y,\ww)
:=
\left\{
(y',\ww') \in \Rbb^{m_E} \times \Rbb^{m_I}
\;\middle|\;
A^\top  (y' - y) = G^\top (\ww' - \ww), \ \ww' \le 0, \ \ww'_{I^C} = 0
\right\},
\end{equation}
where
\[
I = I(A^\top y + c) = \{ i \in [m_I] \mid G_i \Pi_K (A^\top  y + c) = g_i \}.
\]
The associated projection-equivalent set in the dual space is defined by
\begin{equation}\label{eq:mP}
\mP(y)
:=
\left\{
y' \in\Rbb^{m_E} \mid \Pi_K(A^\top  y' + c) = \Pi_K(A^\top y+c)
\right\}.
\end{equation}

By Lemma~\ref{lemma:representation}, the set $\mQ(y,\ww)$ gives a
primal--dual characterization of $\mP(y)$. The following proposition
collects the basic properties of these two sets that will be used
throughout the sequel.

\begin{proposition}\label{prop:mQ-property}
For any $y\in \Rbb^{m_E}$ and $\ww \in \mM(A^\top y + c)$, the following statements hold.
\begin{itemize}
    \item[{\rm (i)}] The sets $\mQ(y,\ww)$ and $\mP(y)$ are polyhedra, and
    \begin{equation}\label{eq:mP-Q}
        \mP(y)
        =
        \left\{
        y'\in \Rbb^{m_E}
        \;\middle|\;
        \exists\, \ww' \in \Rbb^{m_I}\ \mbox{such that } (y',\ww')\in \mQ(y,\ww)
        \right\}
    \end{equation}
    and
    \begin{equation}\label{eq:M-Q}
        \mM(A^\top y+c)
        =
        \left\{
        \ww' \in \Rbb^{m_I}
        \;\middle|\;
        (y,\ww')\in \mQ(y,\ww)
        \right\}.
    \end{equation}

    \item[{\rm (ii)}] The set $\mQ(y,\ww)$ and its extreme point set ${\cal E}(\mQ(y,\ww))$ are nonempty. Moreover,
    \begin{equation}\label{eq:Q-mP}
	{\cal E}(\mP(y)) \subseteq \left\{
     	\widehat y \in \Rbb^{m_E}
        \;\middle|\;
        \exists\, \widehat \ww \in \Rbb^{m_I}\ \mbox{such that } (\widehat y, \widehat \ww) \in {\cal E}(\mQ(y,\ww))
	\right\}.
    \end{equation}

    \item[{\rm (iii)}] The set-valued mappings $\mQ(\cdot,\cdot)$ and $\mP(\cdot)$ induce equivalence relations in the sense that
    \[
        \mQ(y,\ww)=\mQ(y',\ww') \quad \forall\, (y',\ww')\in \mQ(y,\ww)
    \]
    and
    \[
        \mP(y)=\mP(y') \quad \forall\, y'\in \mP(y).
    \]
\end{itemize}
\end{proposition}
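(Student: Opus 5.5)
I will prove the three items in order, relying mainly on Lemma~\ref{lemma:representation} applied with $s:=A^\top y+c$ and on the fact that $A^\top$ is injective, since $A$ has full row rank.

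\emph{Item (i).} $\mQ(y,\ww)$ is defined by finitely many linear equalities and inequalities in $(y',\ww')$, so it is a polyhedron. For \eqref{eq:mP-Q}, note that $A^\top y'+c-s=A^\top(y'-y)$. Lemma~\ref{lemma:representation} then gives $\Pi_K(A^\top y'+c)=\Pi_K(s)$ if and only if there is $\ww'\le0$ with $\ww'_{I^C}=0$ and $A^\top(y'-y)=G^\top(\ww'-\ww)$. This is exactly $(y',\ww')\in\mQ(y,\ww)$. Hence $\mP(y)$ is the image of $\mQ(y,\ww)$ under the linear projection $(y',\ww')\mapsto y'$, and therefore a polyhedron, since linear images of polyhedra are polyhedra.

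For \eqref{eq:M-Q}, set $y'=y$. The condition becomes $G^\top\ww'=G^\top\ww$, $\ww'\le0$, $\ww'_{I^C}=0$. If $\ww'$ satisfies this, then $x-s+G^\top\ww'=x-s+G^\top\ww=0$, and complementarity holds because $\ww'$ vanishes off the active set; so $\ww'\in\mM(s)$. Conversely, any $\ww'\in\mM(s)$ satisfies the same stationarity equation, so $G^\top\ww'=G^\top\ww$, and complementarity forces $\ww'_{I^C}=0$.

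\emph{Item (ii).} The set is nonempty because $(y,\ww)\in\mQ(y,\ww)$. For extreme points, I will show that $\mQ(y,\ww)$ contains no line and then invoke \cite[Corollary 18.5.3]{rockafellar1997convex}. Suppose $(d_y,d_\ww)$ is a lineality direction. Then $\pm d_\ww$ must keep $\ww'\le0$ along the whole line, so $d_\ww=0$. It follows that $A^\top d_y=G^\top d_\ww=0$, and injectivity of $A^\top$ gives $d_y=0$. This is the key point where the lift is needed, since $\mP(y)$ alone may contain lines.

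For \eqref{eq:Q-mP}, take $\widehat y\in\mathcal E(\mP(y))$. The fiber $F:=\{\ww'\mid(\widehat y,\ww')\in\mQ(y,\ww)\}$ is a nonempty polyhedron contained in $\{\ww'\le 0\}$, so it contains no line and has an extreme point $\widehat\ww$. I then claim $(\widehat y,\widehat\ww)$ is extreme in $\mQ$. Suppose $(\widehat y,\widehat\ww)=\tfrac12(p_1+p_2)$ with $p_1,p_2\in\mQ$. Projecting to the first component expresses $\widehat y$ as a midpoint of two points of $\mP(y)$. Extremality of $\widehat y$ forces both $y$-components to equal $\widehat y$. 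Both $\ww$-components then lie in $F$, and extremality of $\widehat\ww$ in $F$ forces them to coincide with $\widehat\ww$.

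\emph{Item (iii).} Let $(y',\ww')\in\mQ(y,\ww)$. By (i), $y'\in\mP(y)$ and $\Pi_K(A^\top y'+c)=x$, so $I(A^\top y'+c)=I$. I will also check that $\ww'\in\mM(A^\top y'+c)$. Stationarity holds because $x-(A^\top y'+c)+G^\top\ww'=x-s-A^\top(y'-y)+G^\top\ww'=x-s+G^\top\ww=0$, and complementarity holds from the sign and support conditions. Thus $\mQ(y',\ww')$ is well defined with the same index set $I$.

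Its defining equation $A^\top(y''-y')=G^\top(\ww''-\ww')$ is equivalent to $A^\top(y''-y)=G^\top(\ww''-\ww)$. This follows by adding the identity $A^\top(y'-y)=G^\top(\ww'-\ww)$. So the two sets coincide. Finally, $\mP(y)=\mP(y')$ is immediate from the definition once $\Pi_K(A^\top y'+c)=\Pi_K(A^\top y+c)$.

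The main obstacle is the extreme-point transfer in (ii); the midpoint argument above handles it cleanly.
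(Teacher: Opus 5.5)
Your proposal is correct and follows essentially the same route as the paper: Lemma~\ref{lemma:representation} gives (i), the sign constraint together with injectivity of $A^\top$ rules out lines, the fiber-extreme-point argument gives \eqref{eq:Q-mP}, and (iii) follows from invariance of the active set and of $A^\top y-G^\top\ww$. Your explicit verification of \eqref{eq:M-Q} and of $\ww'\in\mM(A^\top y'+c)$, which ensures that $\mQ(y',\ww')$ is well defined, fills in details the paper leaves implicit.
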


\begin{proof} \textbf{(i)} The characterization \eqref{eq:mP-Q} follows directly from Lemma~\ref{lemma:representation}, applied to $s=A^\top y+c$. The relation \eqref{eq:M-Q} follows immediately from the definition of $\mQ(y,\ww)$ with $y$ fixed. Since $\mQ(y,\ww)$ is defined by linear equalities and inequalities, it is a polyhedron. Then \eqref{eq:mP-Q} implies that $\mP(y)$ is also polyhedral \cite[Lemma~2.4]{bertsimas1997introduction}.

\textbf{(ii)}  The nonemptiness of $\mQ(y,\ww)$ is immediate from $(y,\ww)\in \mQ(y,\ww)$. We next show that $\mQ(y,\ww)$ contains no lines. Let $(\Delta y,\Delta \ww)$ and $-(\Delta y,\Delta \ww)$ both belong to the recession cone of $\mQ(y,\ww)$. Then
\begin{equation}\label{eq:rankQyw}
A^\top\Delta y-G^\top\Delta \ww=0,\quad \Delta \ww\le 0\quad \mbox{and} \quad -\Delta \ww\le 0.
\end{equation}
Hence $\Delta \ww=0$, and since $A$ has full row rank, it follows that $\Delta y=0$. That is, $\mQ(y,\ww)$ contains no lines, and ${\cal E}(\mQ(y,\ww))\neq\emptyset$ by \cite[Theorem~2.6]{bertsimas1997introduction}.

Let $\widehat y\in {\cal E}(\mP(y))$. By \eqref{eq:mP-Q}, the set
\[
P_{\widehat y}
:=
\left\{
\widetilde\ww\in\Rbb^{m_I}
\;\middle|\;
(\widehat y,\widetilde\ww)\in\mQ(y,\ww)
\right\}
\]
is a nonempty polyhedron. Moreover, $P_{\widehat y}\subseteq\Rbb_-^{m_I}$, so it
contains no lines and therefore has
an extreme point, say $\widehat\ww\in{\cal E}(P_{\widehat y})$. We claim that
\(
(\widehat y,\widehat\ww)\in{\cal E}(\mQ(y,\ww)).
\)
Suppose otherwise. Then there exist distinct
$(y_1,\ww_1),(y_2,\ww_2)\in\mQ(y,\ww)$ and $\theta\in(0,1)$ such that
\[
(\widehat y,\widehat\ww)
=
\theta(y_1,\ww_1)+(1-\theta)(y_2,\ww_2).
\]
By \eqref{eq:mP-Q}, $y_1,y_2\in\mP(y)$. Since
$\widehat y\in{\cal E}(\mP(y))$, we have
$y_1=y_2=\widehat y$. Thus, $\ww_1,\ww_2\in P_{\widehat y}$ and
\[
\widehat\ww=\theta\ww_1+(1-\theta)\ww_2,
\]
contradicting $\widehat\ww\in{\cal E}(P_{\widehat y})$. Hence
\eqref{eq:Q-mP} holds.

\textbf{(iii)} The equality $\mP(y)=\mP(y')$ for all $y'\in\mP(y)$ follows immediately from the definition of $\mP(y)$. Now let $(y',\ww')\in \mQ(y,\ww)$. By \eqref{eq:mP-Q}, we have $y'\in\mP(y)$, and hence
\[
I(A^\top y'+c)=I(A^\top y+c).
\]
Moreover, from the defining equality in \eqref{eq:mQ-y-ww},
\[
A^\top y-G^\top \ww = A^\top y' - G^\top \ww'.
\]
These two facts imply directly that $\mQ(y,\ww)=\mQ(y',\ww')$. \hfill $\Box$
\end{proof}

Proposition~\ref{prop:mQ-property} shows that the lifted set $\mQ(y,\ww)$ always possesses extreme points, whereas the purely dual set $\mP(y)$ may fail to have any extreme point. Motivated by this observation, we introduce the set of $y$-components of
extreme points of $\mQ(y,\ww)$: for any $y\in \Rbb^{m_E}$ and any
$\ww\in \mM(A^\top y+c)$,
\begin{equation}\label{eq:def-hatmP}
        \widehat{\mP}(y)
        :=
        \left\{
        \widehat y\in \Rbb^{m_E}
        \;\middle|\;
        \exists\,\widehat \ww\in \Rbb^{m_I}\ \mbox{such that }
        (\widehat y,\widehat \ww)\in {\cal E}(\mQ(y,\ww))
        \right\}.
\end{equation}
Although the definition of \(\widehat{\mathcal P}(y)\) is written using
a multiplier \(w\in\mathcal M(A^\top y+c)\), the resulting set is
independent of the choice of \(w\). Indeed, for any
\(w_1,w_2\in\mathcal M(A^\top y+c)\), one has
\((y,w_2)\in\mathcal Q(y,w_1)\), and hence
\[
\mathcal Q(y,w_1)=\mathcal Q(y,w_2)
\]
by Proposition~\ref{prop:mQ-property}(iii).
Moreover, if $y'\in\mP(y)$, then $\widehat{\mP}(y')=\widehat{\mP}(y)$.

Next, we establish the connections between the extreme points of the lifted set and nonsingular generalized Jacobians of $\nabla\varphi$. For any $y\in\Rbb^{m_E}$, define the computable generalized Jacobian of $\nabla \varphi$:
\begin{equation}\label{eq:gen-jac}
    \partial^2 \varphi(y)
    :=
    \left\{
    W \in \Rbb^{m_E\times m_E}
    \;\middle|\;
    W = A U A^\top,\ \
    U \in \partial_{\rm HS}\Pi_K(A^\top y + c)
    \right\}.
\end{equation}
Here $\partial_{\mathrm{HS}}\Pi_K$ denotes the Han--Sun generalized Jacobian introduced in Section~\ref{sec:prelim}. Moreover, since $\nabla\varphi(y)=A\Pi_K(A^\top y+c)-b$, one can check from Lemma~\ref{lemma:semismooth_HS} that, for any given $\overline y\in\Rbb^{m_E}$, there exists a neighborhood $N$ of $\overline y$ such that, for all $y\in N$,
\[
\partial^2\varphi(y)\subseteq \partial^2\varphi(\overline y)
\]
and
\begin{equation}\label{eq:semismooth-varphi}
    \nabla \varphi(y)-\nabla \varphi(\overline y)-W(y-\overline y)=0
    \qquad \forall\,W\in\partial^2\varphi(y).
\end{equation}
In particular, $\nabla\varphi$ is strongly semismooth with respect to $\partial^2\varphi$ in the sense of \cite[Definition 1]{li2018efficiently}.

The following theorem is the main result of this subsection. It characterizes when a lifted pair $(y,w)$ is an extreme point of
$\mathcal Q(y,w)$, equivalently, when the generalized Jacobian of $\nabla\varphi$ is nonsingular.
In addition, it shows that the extreme-point geometry, nonsingularity, a suitable full-rank condition, and the W-SRCQ-type condition introduced in \cite{feng2024quadratically} are equivalent characterizations of the same underlying regularity property.

\begin{theorem}\label{thm:nons-gen-jac}
For any given $y\in \Rbb^{m_E}$ and $\ww \in \mM(A^\top y + c)$, let $J = {\rm supp}(\ww)$. Then the following four statements are equivalent:
\begin{itemize}
	\item[{\rm (i)}] $(y,\ww)$ is an extreme point of $\mQ(y,\ww)$, i.e., $(y,\ww)\in {\cal E}(\mQ(y,\ww))$;

	\item[{\rm (ii)}] the matrix $\begin{bmatrix}  A^\top & G_J^\top  \end{bmatrix} \in \Rbb^{n\times (m_E + |J|)}$ has full column rank;

	\item[{\rm (iii)}] the matrix $G_J$ has full row rank and
    \begin{equation}\label{eq:def-AUAT}
        AUA^\top  \in \partial^2\varphi(y) \ \mbox{is nonsingular},
    \end{equation}
	where $U = I_{n} - G_J^\top (G_J G_J^\top )^{-1} G_J \in \partial_{\rm HS}\Pi_K(A^\top  y + c)$;

    \item[{\rm (iv)}] with $x=\Pi_K(A^\top y+c)$, the following condition holds:
    \begin{align}\label{eq:w-SRCQ-Rbb+}
        \begin{bmatrix}
            A \\ G
        \end{bmatrix}\Rbb^n+
        \begin{bmatrix}
            \{0\} \\
            \aff(\mT_{\Rbb^{m_I}_+}(Gx-g)\cap {\ww}^{\perp})
        \end{bmatrix}=
        \begin{bmatrix}
            \Rbb^{m_E} \\
            \Rbb^{m_I}
        \end{bmatrix}.
    \end{align}
\end{itemize}
\end{theorem}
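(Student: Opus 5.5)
I will prove the cycle of equivalences (i)$\Leftrightarrow$(ii)$\Leftrightarrow$(iii) and (ii)$\Leftrightarrow$(iv). Every step reduces to linear algebra on the polyhedron $\mQ(y,\ww)$, and throughout I use that $J=\supp(\ww)\subseteq I$ because $\ww\in\mM(A^\top y+c)$.

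For (i)$\Leftrightarrow$(ii), I use the standard characterization of extreme points of a polyhedron. A point is extreme if and only if no nonzero direction $(\Delta y,\Delta \ww)$ allows moving in both directions $\pm t(\Delta y,\Delta\ww)$, for small $t>0$, while staying in the set. At $(y,\ww)$, the inequality constraints $\ww'_i\le 0$ with $i\in I$ are inactive exactly for $i\in J$, since $\ww_i<0$ there. The constraints $\ww'_i=0$ are active for $i\in I\setminus J$ and for $i\in I^C$. Hence a two-sided feasible direction must satisfy $\Delta\ww_{J^C}=0$ and $A^\top\Delta y=G_J^\top\Delta\ww_J$. Conversely, any such direction is two-sided feasible for small $t$. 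Therefore $(y,\ww)$ is extreme if and only if $\begin{bmatrix}A^\top & G_J^\top\end{bmatrix}$ has trivial kernel, after the sign change $\Delta\ww_J\mapsto-\Delta\ww_J$. This is exactly (ii).

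For (ii)$\Leftrightarrow$(iii), first note that (ii) implies that $G_J^\top$ has full column rank, i.e., $G_J$ has full row rank. Combined with $\supp(\ww)\subseteq J\subseteq I$, this gives $J\in{\cal D}(A^\top y+c)$, so $U\in\partial_{\rm HS}\Pi_K(A^\top y+c)$. Since $U$ is the orthogonal projector onto $\tNu(G_J)$, we have
\[
\la d,AUA^\top d\ra=\|UA^\top d\|^2 .
\]
So $AUA^\top$ is singular if and only if there is $d\neq0$ with $A^\top d\in\tIm(G_J^\top)$. Because $A^\top$ is injective, this says precisely that some nonzero $(d,-v)$ lies in the kernel of $\begin{bmatrix}A^\top& G_J^\top\end{bmatrix}$. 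Conversely, if $G_J$ has full row rank, a nonzero kernel vector of $\begin{bmatrix}A^\top& G_J^\top\end{bmatrix}$ must have a nonzero $y$-part, and this part gives a singular direction of $AUA^\top$. This proves the equivalence.

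For (ii)$\Leftrightarrow$(iv), I compute the set $\aff(\mT_{\Rbb^{m_I}_+}(Gx-g)\cap\ww^\perp)$. The tangent cone is $\{h\mid h_i\ge0,\ i\in I\}$. Intersecting with $\ww^\perp$ and using $\ww_J<0$, $\ww_{J^C}=0$, $h_J\ge 0$ forces $h_J=0$. The result is the cone $\{h\mid h_J=0,\ h_{I\setminus J}\ge0\}$, whose affine hull is the subspace $L:=\{h\mid h_J=0\}$. Condition \eqref{eq:w-SRCQ-Rbb+} then says that the subspace $\{(Az,Gz+h)\mid z\in\Rbb^n,\ h\in L\}$ is all of $\Rbb^{m_E+m_I}$. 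Taking orthogonal complements, this holds if and only if the only pair $(\mu,\lambda)$ with $A^\top\mu+G^\top\lambda=0$ and $\lambda\perp L$ is the zero pair. The condition $\lambda\perp L$ means $\supp(\lambda)\subseteq J$, so the requirement becomes $A^\top\mu+G_J^\top\lambda_J=0\Rightarrow(\mu,\lambda_J)=0$, which is (ii).

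The main obstacle I expect is the careful identification of the two-sided feasible directions in (i)$\Leftrightarrow$(ii), in particular the bookkeeping of which indices are active among $I$, $I\setminus J$ and $I^C$. The affine-hull computation in (iv) requires similar care, including the degenerate case $J=\emptyset$ under the paper's convention.
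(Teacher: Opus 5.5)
Your proposal is correct and follows essentially the same route as the paper: (i)$\Leftrightarrow$(ii) via the absence of nonzero two-sided directions satisfying $A^\top\Delta y=G_J^\top\Delta w_J$ and $\Delta w_{J^C}=0$; (ii)$\Leftrightarrow$(iii) via $\la d,AUA^\top d\ra=\|UA^\top d\|^2$ together with $\tNu(U)=\tIm(G_J^\top)$; and (ii)$\Leftrightarrow$(iv) via $\aff(\mT_{\Rbb^{m_I}_+}(Gx-g)\cap w^\perp)=\{h\mid h_J=0\}$. The differences are cosmetic: you explicitly check that $J\in\mathcal D(A^\top y+c)$, so that $U$ is a legitimate Han--Sun element, and you phrase the last step through orthogonal complements, whereas the paper states it directly as surjectivity of $\begin{bmatrix}A\\ G_J\end{bmatrix}$.
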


\begin{proof}
{\bf (i)$\iff$(ii)}: By the definition of $\mQ(y,\ww)$ in \eqref{eq:mQ-y-ww}, the point $(y,\ww)$ is an extreme point of $\mQ(y,\ww)$ if and only if the implication
\begin{equation}\label{eq:Q-extreme}
\left\{
\begin{array}{l}
A^\top  \Delta y - G^\top  \Delta \ww = 0,\\[3pt]
\Delta \ww_{J^C} = 0
\end{array}
\right.
\quad\Longrightarrow\quad
(\Delta y,\Delta \ww)=0
\end{equation}
holds. Since $\Delta \ww_{J^C}=0$, condition \eqref{eq:Q-extreme} is equivalent to
\[
A^\top \Delta y = G_J^\top  \Delta \ww_J \quad\Longrightarrow\quad (\Delta y,\Delta \ww_J)=0.
\]
Because $A$ has full row rank, this holds if and only if
\[
\tIm(A^\top )\cap \tIm(G_J^\top )=\{0\}
\]
and $G_J$ has full row rank, which is equivalent to $\begin{bmatrix}  A^\top & G_J^\top  \end{bmatrix}$ having full column rank.

{\bf (ii)$\iff$(iii)}: the nonsingularity of $AUA^\top $ is equivalent to
    \[
    A U A^\top  d = 0 \quad \Longrightarrow \quad d = 0.
    \]
    Since $U$ is symmetric positive semidefinite, $AUA^\top  d=0$ implies
    $d^\top  AUA^\top  d=0$ and hence $(A^\top  d)^\top  U (A^\top  d)=0$, which yields $UA^\top  d=0$.
    Conversely, $UA^\top  d=0$ obviously implies $AUA^\top  d=0$.
    Therefore, the above condition is equivalent to
    \[
    U A^\top  d = 0 \quad \Longrightarrow \quad d = 0.
    \]
    Since $\tNu(U)=\tIm(G_J^\top )$ and $A$ has full row rank, the above relation holds if and only if
    \begin{equation}\label{eq:nons-AUA}
    	\tIm(A^\top )\cap \tIm(G_J^\top )=\{0\},
    \end{equation}
    which, together with $G_J$ being full row rank, is exactly (ii).

{\bf (ii)$\iff$(iv)}: Using the complementarity relations
\[
Gx-g\in\Rbb^{m_I}_+,\quad \ww\le 0\quad \mbox{and} \quad \ww^\top (Gx-g)=0,
\]
one obtains
\[
\aff\!\big(\mT_{\Rbb^{m_I}_+}(Gx-g)\cap \ww^\perp\big)
=
\{d\in\Rbb^{m_I}\mid d_J=0\}.
\]
Hence \eqref{eq:w-SRCQ-Rbb+} is equivalent to
\[
\begin{bmatrix}
A\\G_J
\end{bmatrix}\Rbb^n
=
\begin{bmatrix}
\Rbb^{m_E}\\ \Rbb^{|J|}
\end{bmatrix},
\]
which in turn is equivalent to $\begin{bmatrix}  A^\top & G_J^\top  \end{bmatrix}$ having full column rank. \hfill $\Box$
\end{proof}

The above characterization also admits the following two remarks, which clarify respectively the trivial dual-feasible case and the role of the associated weak regularity condition.

\begin{remark}
    If $J=\emptyset$, then $A^\top y+c\in K$ and hence $\Pi_K(A^\top y+c)=A^\top y+c$. In this case, we choose the Han--Sun generalized Jacobian $U=I_n$. Therefore, $AUA^\top=AA^\top$ is an element of $\partial^2\varphi(y)$, and it is nonsingular since $A$ has full row rank. 
\end{remark}

\begin{remark}\label{remark:w-srcq}
Condition \eqref{eq:w-SRCQ-Rbb+} can be interpreted as a generalized weak strict Robinson constraint qualification (W-SRCQ), introduced in \cite[Remark 13]{feng2024quadratically}, for the reformulation of \eqref{prob:K-lin-proj}
\begin{equation}\label{prob:1-re}
    \begin{array}{cl}
        \min & \displaystyle \frac{1}{2} \| x-c\|^2 \\[10pt]
        \mbox{s.t.} & \begin{bmatrix}
            A \\ G
        \end{bmatrix} x -\begin{bmatrix}
            b\\ g
        \end{bmatrix}\in \begin{bmatrix}
            \{0\}\\ \Rbb^{m_I}_+
        \end{bmatrix}.
    \end{array}
\end{equation}
In \cite{feng2024quadratically}, it is shown that the W-SRCQ is closely related to the existence of a nonsingular generalized Jacobian of the KKT residual mapping. 
Here, we further show that, for the polyhedral projection problem, every lifted projection-equivalence class contains an extreme pair at which the corresponding W-SRCQ-type full-rank condition holds. 
\end{remark}

\subsection{A simplification under the linear independence property}

The results developed so far do not require additional structural assumptions on the polyhedron $K$. Under the following linear independence property, however, the projection-equivalent set admits a simpler description in the dual space, and the extreme point set ${\cal E}(\mP(y))$ becomes nonempty.

\begin{definition}\label{def:LIP}
A nonempty polyhedron $K = \{x\in \Rbb^n \mid Gx \ge g\}$ is said to have the \emph{Linear Independence Property (LIP)} if, for every $x \in K$,
\begin{equation}\label{eq:LIP}
G_I \ \mbox{has full row rank}, \quad I = \{ i\in [m_I] \mid G_i x = g_i \}.
\end{equation}
\end{definition}

The LIP holds, for instance, when $K$ is the nonnegative orthant, a box, or a full dimensional generalized box in \eqref{eq:gen-box}. Thus, this assumption is fully consistent with our simplicity requirement on $K$.
Under the LIP, the multiplier associated with a projection point becomes unique, and the set $\mP(y)$ admits an explicit description.

\begin{proposition}\label{prop:LIP-explicit-P}
Suppose that $K = \{x\in \Rbb^n \mid Gx \ge g\}$ has the LIP. For any $y\in \Rbb^{m_E}$, the set ${\cal M}(A^\top y + c)$ is a singleton. Moreover,
\begin{equation}\label{eq:LIP-explicit-P}
\mP(y)
=
\left\{
y'\in \Rbb^{m_E}
\;\middle|\;
\begin{aligned}
& A^\top (y - y') = G_I^\top (G_IG_I^\top )^{-1}G_IA^\top (y - y'),\\
& (G_I G_I^\top )^{-1}G_I(A^\top y'+ c - \Pi_K(A^\top y + c) ) \le 0
\end{aligned}
\right\},
\end{equation}
where
\[
I = \{i\in [m_I] \mid G_i \Pi_K(A^\top y + c) = g_i \}.
\]
\end{proposition}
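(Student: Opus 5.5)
First I will establish uniqueness of the multiplier. Let $s=A^\top y+c$ and $x=\Pi_K(s)$. Any $\ww\in\mM(s)$ satisfies $\ww_{I^C}=0$ by complementarity, so the stationarity equation in \eqref{eq:kkt-proj-K} reads $s-x=G_I^\top \ww_I$. The LIP at $x\in K$ says $G_I$ has full row rank, so $G_I^\top$ is injective. This forces
\[
\ww_I=(G_IG_I^\top)^{-1}G_I(s-x), \qquad \ww_{I^C}=0.
\]
Hence $\mM(s)$ is the singleton $\{\ww\}$.

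Next I will derive the description \eqref{eq:LIP-explicit-P} from \eqref{eq:mP-Q} in Proposition~\ref{prop:mQ-property}(i). There, $y'\in\mP(y)$ if and only if some $\ww'$ satisfies
\[
A^\top(y'-y)=G^\top(\ww'-\ww),\qquad \ww'\le0,\qquad \ww'_{I^C}=0.
\]
Since both $\ww$ and $\ww'$ vanish off $I$, the equation becomes $A^\top(y'-y)=G_I^\top(\ww'_I-\ww_I)$, so $A^\top(y'-y)\in\tIm(G_I^\top)$. I will use two facts.

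First, $P_I:=G_I^\top(G_IG_I^\top)^{-1}G_I$ is the orthogonal projector onto $\tIm(G_I^\top)$. So a vector $v$ lies in $\tIm(G_I^\top)$ if and only if $v=P_Iv$, which is exactly the first line of \eqref{eq:LIP-explicit-P} with $v=A^\top(y-y')$.

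Second, injectivity of $G_I^\top$ means $\ww'_I$ is uniquely determined once that inclusion holds:
\[
\ww'_I=\ww_I+(G_IG_I^\top)^{-1}G_IA^\top(y'-y).
\]
Substituting the formula for $\ww_I$ gives
\[
\ww'_I=(G_IG_I^\top)^{-1}G_I\bigl(A^\top y+c-x+A^\top(y'-y)\bigr)=(G_IG_I^\top)^{-1}G_I(A^\top y'+c-x).
\]
The condition $\ww'\le0$ is therefore exactly the second line of \eqref{eq:LIP-explicit-P}; the components off $I$ are zero by construction.

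For the reverse inclusion, suppose $y'$ satisfies both lines. The first line gives $A^\top(y'-y)=G_I^\top\delta$ with $\delta=(G_IG_I^\top)^{-1}G_IA^\top(y'-y)$. I then set $\ww'_I=\ww_I+\delta$ and $\ww'_{I^C}=0$. The second line gives $\ww'\le0$, so $(y',\ww')\in\mQ(y,\ww)$, and \eqref{eq:mP-Q} yields $y'\in\mP(y)$.

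The only delicate point is keeping the index bookkeeping consistent. Specifically, I must note that $I$ is the active set of $x$ itself, so the LIP applies to exactly this $G_I$. The case $I=\emptyset$ should be checked separately under the zero-matrix convention: then $P_I=0$ and the first line reads $A^\top(y-y')=0$, i.e.\ $y'=y$ since $A$ has full row rank. This is consistent with Lemma~\ref{lemma:representation}, because with $I=\emptyset$ it forces $s'=s$.
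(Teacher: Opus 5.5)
Your proof is correct and takes the same route as the paper. You get uniqueness of the multiplier from the full row rank of $G_I$ in the stationarity equation, then characterize $\mP(y)$ through Lemma~\ref{lemma:representation} (via \eqref{eq:mP-Q}), eliminating $\ww'$ with the projector $G_I^\top(G_IG_I^\top)^{-1}G_I$ and the injectivity of $G_I^\top$; you simply write out the elimination and the $I=\emptyset$ check that the paper's two-line proof leaves implicit.
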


\begin{proof}
Let $x=\Pi_K(A^\top y+c)$ and $I=\{i\in[m_I]\mid G_i x=g_i\}$. By the LIP, $G_I$ has full row rank. The KKT system \eqref{eq:kkt-proj-K} then determines the active multiplier uniquely through
\[
x-(A^\top y+c)+G_I^\top \ww_I=0,
\quad \ww_I\le 0,
\quad \ww_{I^C}=0.
\]
Hence ${\cal M}(A^\top y+c)$ is a singleton. The representation \eqref{eq:LIP-explicit-P} then follows by applying Lemma~\ref{lemma:representation} with the unique multiplier and eliminating it through the full-row-rank relation of $G_I$. \hfill $\Box$
\end{proof}

Under the LIP, we can also describe the extreme points of $\mP(y)$ explicitly.

\begin{proposition}\label{prop:Q-mP-inv}
Suppose that $K = \{x\in \Rbb^n \mid Gx \ge g\}$ has the LIP. For any $y\in \Rbb^{m_E}$, one has
\begin{equation}\label{eq:Q-mP-inv}
	\emptyset\neq{\cal E}(\mP(y)) =\widehat{\mP}(y),
\end{equation}
where $\widehat{\mP}$ is defined in \eqref{eq:def-hatmP}.
\end{proposition}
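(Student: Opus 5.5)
Under the LIP the multiplier is a function of the dual variable, so $\mQ(y,\ww)$ is essentially the graph of an affine map over $\mP(y)$. The plan is to prove this graph structure, read off the equality of extreme-point sets from it, and then get nonemptiness from Proposition~\ref{prop:mQ-property}(ii).

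\emph{Step 1 (graph structure).} Fix $y$, and let $\ww$ be the unique element of $\mM(A^\top y+c)$ given by Proposition~\ref{prop:LIP-explicit-P}. Let $I=I(A^\top y+c)$. For any $(y',\ww')\in\mQ(y,\ww)$ we have $\ww'_{I^C}=\ww_{I^C}=0$, so the defining equation of $\mQ$ becomes $A^\top(y'-y)=G_I^\top(\ww'_I-\ww_I)$. By the LIP, $G_I$ has full row rank, so $G_I^\top$ is injective. Hence
\[
\ww'_I=\ww_I+(G_IG_I^\top)^{-1}G_IA^\top(y'-y).
\]
Thus $\ww'=T(y')$ for the affine map $T$ defined by this formula on the $I$-components and by zero on $I^C$. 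In words, $\mQ(y,\ww)=\{(y',T(y'))\mid y'\in\mP(y)\}$, where we use \eqref{eq:mP-Q} to identify the projection of $\mQ(y,\ww)$ onto the $y$-coordinates with $\mP(y)$.

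\emph{Step 2 (extreme points correspond).} The map $y'\mapsto(y',T(y'))$ is an affine bijection from $\mP(y)$ onto $\mQ(y,\ww)$; its inverse is the coordinate projection. Affine bijections between convex sets preserve convex combinations in both directions, so they map extreme points onto extreme points. Therefore $\widehat y\in{\cal E}(\mP(y))$ if and only if $(\widehat y,T(\widehat y))\in{\cal E}(\mQ(y,\ww))$. Since $T(\widehat y)$ is the only possible $\ww$-partner of $\widehat y$ in $\mQ(y,\ww)$, this is exactly the statement $\widehat y\in\widehat{\mP}(y)$. The inclusion ${\cal E}(\mP(y))\subseteq\widehat{\mP}(y)$ is already \eqref{eq:Q-mP}, so the new content is the reverse inclusion. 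For that direction: if $\widehat y=\theta y_1+(1-\theta)y_2$ with $y_1,y_2\in\mP(y)$, then the lifted points $(y_i,T(y_i))$ lie in $\mQ(y,\ww)$. By affinity of $T$ they average to $(\widehat y,T(\widehat y))$, so extremality in $\mQ(y,\ww)$ forces $y_1=y_2$.

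\emph{Step 3 (nonemptiness).} Proposition~\ref{prop:mQ-property}(ii) gives ${\cal E}(\mQ(y,\ww))\neq\emptyset$, so $\widehat{\mP}(y)\neq\emptyset$, and by Step 2 also ${\cal E}(\mP(y))\neq\emptyset$.

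The main point requiring care is Step 1. One must justify that every element of $\mQ(y,\ww)$ has multiplier supported in $I$; this is built into the definition of $\mQ$. One must also check that injectivity of $G_I^\top$ is exactly what makes $\ww'$ uniquely and affinely determined by $y'$, which is where the LIP enters. Once the graph description is in place, the rest is the routine fact that extreme points are preserved under affine isomorphisms.
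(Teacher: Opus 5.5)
Your proof is correct and follows essentially the same route as the paper. In both, the LIP makes $G_I^\top$ injective, so the multiplier component of any point of $\mQ(y,\ww)$ is affinely determined by its $y$-component; this lets a convex combination in $\mP(y)$ be lifted to one in $\mQ(y,\ww)$, giving the reverse inclusion, while the forward inclusion and nonemptiness come from Proposition~\ref{prop:mQ-property}(ii). The only difference is packaging: you phrase it as an affine bijection between $\mP(y)$ and its graph $\mQ(y,\ww)$, whereas the paper argues by contradiction directly at $(\widehat y,\widehat\ww)$, using $\mQ(y,\ww)=\mQ(\widehat y,\widehat\ww)$.
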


\begin{proof}
Proposition~\ref{prop:LIP-explicit-P} implies the uniqueness of the multiplier $\{w\}={\cal M}(A^\top y+c)$.
Recall from Proposition~\ref{prop:mQ-property} that
\[
{\cal E}(\mP(y)) \subseteq \widehat{\mP}(y)=\left\{
     	\widehat y \in \Rbb^{m_E} \;\middle|\; \exists\, \widehat \ww \in \Rbb^{m_I}\ \mbox{such that } (\widehat y, \widehat \ww) \in {\cal E}(\mQ(y,\ww))
	\right\} \neq \emptyset.
\]
Hence, it remains to prove the reverse inclusion. Fix any $\widehat y\in \widehat{\mP}(y)$. By definition, there exists $\widehat \ww\in \Rbb^{m_I}$ such that $(\widehat y,\widehat \ww)\in {\cal E}(\mQ(y,\ww))$. We next show that $\widehat y\in {\cal E}(\mP(y))$. Suppose on the contrary that $\widehat y\notin {\cal E}(\mP(y))$. Then there exist $y^1,y^2\in \mP(y) = \mP(\widehat y)$, both different from $\widehat y$, and a scalar $\lambda\in(0,1)$ such that
\[
\widehat y=\lambda y^1+(1-\lambda) y^2.
\]
By \eqref{eq:mP-Q}, there exist $\ww^1,\ww^2\in\Rbb^{m_I}$ such that
\[
(\,y^1,\ww^1\,),\ (\;y^2,\ww^2\;)\in \mQ(y,\ww) = \mQ(\widehat y,\widehat \ww).
\]
Let $I=\{i\in[m_I]\mid G_i\Pi_K(A^\top \widehat y+c)=g_i\}$. Then, we have from \eqref{eq:mQ-y-ww} that
\[
A^\top(y^1-\widehat y)=G^\top(\ww^1-\widehat \ww),\quad
A^\top(y^2-\widehat y)=G^\top(\ww^2-\widehat \ww)
\]
with $\widehat\ww_{I^C} = \ww^1_{I^C}=\ww^2_{I^C}=0$. Then, it holds that
\[
G_I^\top(\lambda\ww_I^1+(1-\lambda)\ww_I^2- \widehat \ww_I)=0.
\]
Since the LIP implies that $G_I$ has full row rank, it follows that
\[
\widehat \ww_I=\lambda\ww_I^1+(1-\lambda)\ww_I^2.
\]
Hence
\[
(\widehat y,\widehat \ww)=\lambda(y^1,\ww^1)+(1-\lambda)(y^2,\ww^2),
\]
which contradicts the assumption that $(\widehat y,\widehat \ww)$ is an extreme point of $\mQ(y,\ww)$. Therefore, $\widehat y\in {\cal E}(\mP(y))$. \hfill $\Box$
\end{proof}

\begin{remark}
    Proposition~\ref{prop:Q-mP-inv} shows that, under the LIP, it suffices to focus on the dual projection-equivalent set ${\cal P}(y)$ and its extreme points, since
    $
    \mathcal E(\mathcal P(y))=\widehat{\mathcal P}(y)\neq\emptyset .
    $
    This is the case, for example, for structured problems such as the nearest doubly stochastic matrix problem studied in~\cite{hu2024semismooth}, where $K=\mathbb{R}_+^{m^2}$ satisfies the LIP under the formulation~\eqref{prob:K-lin-proj}. When the LIP fails, however, the situation changes substantially. The multiplier associated with a projection point may no longer be unique, and ${\cal E}(\mathcal P(y))$ may even be empty. In such cases, searching for an extreme point of $\mathcal P(y)$ is not feasible in general. Passing instead to the primal--dual lifted set $\mathcal Q(y,w)$ avoids the need for the LIP. 
    This explains why the primal--dual lifting is needed in the general polyhedral setting.
\end{remark}

\subsection{Error bound analysis with respect to the projection-equivalent set}\label{sec:sub-error}

Theorem~\ref{thm:nons-gen-jac} shows that a nonsingular generalized Jacobian can always be obtained at a suitable extreme point of the lifted projection-equivalent set. To exploit this fact in the subsequent Newton analysis, it remains to understand how far such an extreme point may lie from the current iterate and, more importantly, how this displacement scales relative to the distance to the target set. The purpose of this subsection is to establish precisely such estimates.

\begin{lemma}\label{lemma:dist-hat-y-opt}
There exist constants $C,C'>0$, depending only on $A$ and $G$,
such that the following statements hold. Fix any $\overline y\in\Rbb^{m_E}$ and any
$\overline\ww\in\mM(A^\top\overline y+c)$. Then, for every
$y\in\Rbb^{m_E}$ satisfying
\(
I(A^\top y+c)
\subseteq
I(A^\top\overline y+c),
\)
every $\ww\in\mM(A^\top y+c)$, and every
\(
(\widehat y,\widehat\ww)
\in{\cal E}\bigl(\mQ(y,\ww)\bigr),
\)
the following assertions hold.

\begin{enumerate}
\item[{\rm (i)}]
For every $y^*\in\mP(\overline y)$,
\[
\dist\bigl(\widehat y,\mP(\overline y)\bigr)
\le
\dist\bigl(
    (\widehat y,\widehat\ww),
    \mQ(\overline y,\overline\ww)
\bigr)
\le
C\|y-y^*\|.
\]
Consequently,
\begin{equation}\label{eq:dist_in1}
\dist\bigl(\widehat y,\mP(\overline y)\bigr)
\le \dist\bigl(
    (\widehat y,\widehat\ww),
    \mQ(\overline y,\overline\ww)
\bigr) \le 
C\,\dist\bigl(y,\mP(\overline y)\bigr).
\end{equation}

\item[{\rm (ii)}]
There exists a point
\[
(y^{\rm e},\ww^{\rm e})
\in
{\cal E}\bigl(\mQ(\overline y,\overline\ww)\bigr)
\]
such that
\begin{equation}\label{eq:dist_in2}
\|\widehat y-y^{\rm e}\|
\le
C'\,\dist\bigl(y,\mP(\overline y)\bigr).
\end{equation}
\end{enumerate}
\end{lemma}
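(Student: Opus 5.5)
The plan is to use polyhedral (Hoffman) error bounds with constants depending only on $A$ and $G$. Write $\overline I:=I(A^\top\overline y+c)$ and $I:=I(A^\top y+c)\subseteq\overline I$. Set $\overline s:=A^\top\overline y-G^\top\overline\ww$, which equals $\Pi_K(A^\top\overline y+c)-c$ by the KKT system \eqref{eq:kkt-proj-K}. Then $\mQ(\overline y,\overline\ww)=\{(y',\ww')\mid A^\top y'-G^\top\ww'=\overline s,\ \ww'\le0,\ \ww'_{\overline I^C}=0\}$. Similarly, $\mQ(y,\ww)=\{(y',\ww')\mid A^\top y'-G^\top\ww'=s,\ \ww'\le0,\ \ww'_{I^C}=0\}$ with $s:=\Pi_K(A^\top y+c)-c$.

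\textbf{Step 1.} The pair $(\widehat y,\widehat\ww)$ lies in $\mQ(y,\ww)$, so it satisfies all inequality constraints of $\mQ(\overline y,\overline\ww)$, since $I\subseteq\overline I$. Only the equality is violated, with residual $\|A^\top\widehat y-G^\top\widehat\ww-\overline s\|=\|s-\overline s\|$. For $y^*\in\mP(\overline y)$ we have $\Pi_K(A^\top y^*+c)=\overline s+c$, and nonexpansiveness of $\Pi_K$ gives $\|s-\overline s\|\le\|A\|\,\|y-y^*\|$. Hoffman's lemma applied to the system in $(y',\ww')$ bounds the distance to $\mQ(\overline y,\overline\ww)$ by $\kappa\|s-\overline s\|$. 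The system is determined by $A$, $G$ and the index set $\overline I$, whose right-hand side $\overline s$ only shifts. Since there are finitely many $\overline I\subseteq[m_I]$, the maximum $\kappa$ over them depends only on $A,G$. Projecting onto the $y$-component gives the first inequality, by \eqref{eq:mP-Q}. Taking the infimum over $y^*$ yields \eqref{eq:dist_in1} with $C=\kappa\|A\|$.

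\textbf{Step 2 (part (ii)).} This is the main obstacle. A nearby point of $\mQ(\overline y,\overline\ww)$ need not be an extreme point, so I would use a support-reduction argument. Let $J:=\supp(\widehat\ww)$. By Theorem~\ref{thm:nons-gen-jac}, $[A^\top\ G_J^\top]$ has full column rank. Consider the face $F:=\{(y',\ww')\in\mQ(\overline y,\overline\ww)\mid \ww'_{J^C}=0\}$. If $F$ is nonempty, it is a singleton $\{(y^{\rm e},\ww^{\rm e})\}$, because the equality $A^\top y'-G_J^\top\ww'_J=\overline s$ has at most one solution. A point that is the only element of a face is an extreme point. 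Hoffman applied to $F$ (again with finitely many matrix patterns) then gives $\|(\widehat y,\widehat\ww)-(y^{\rm e},\ww^{\rm e})\|\le\kappa'\|s-\overline s\|$.

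\textbf{Step 3.} It remains to show that $F\neq\emptyset$. I would try the following. Take the nearest point $(\tilde y,\tilde\ww)\in\mQ(\overline y,\overline\ww)$ from Step 1. If $\supp(\tilde\ww)\not\subseteq J$, perform Carathéodory-type support reduction from $(\tilde y,\tilde\ww)$ inside $\mQ(\overline y,\overline\ww)$, restricted to $\supp\subseteq J\cup\supp(\tilde\ww)$. This moves along null directions of $[A^\top\ -G^\top]$ on that support until a vertex is reached. Each move is bounded proportionally to the small components $\tilde\ww_{J^C}$, which satisfy $\|\tilde\ww_{J^C}\|\le\|\tilde\ww-\widehat\ww\|=O(\|s-\overline s\|)$. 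A cleaner route, if that fails, is to argue with the perturbed system directly. Every extreme point of $\mQ(y,\ww)$ is a basic solution: it is the unique solution of $A^\top y'-G_{B}^\top\ww'_{B}=s$ for a basis $B\supseteq J$ with $[A^\top\ G_B^\top]$ of full column rank, obtained by adding rows to make it square within its span. Take the basic solution with the same $B$ and right-hand side $\overline s$. Its feasibility (the sign conditions on $B$) may fail, and this is exactly where the difficulty sits. The remedy I would try is a two-phase estimate. First reach a vertex of $\mQ(\overline y,\overline\ww)$ within $\kappa''\|s-\overline s\|$ of $(\tilde y,\tilde\ww)$ using the reduction, with constants from finitely many submatrices of $[A^\top\ -G^\top]$. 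Then combine with Step 1 and set $C'=C+\kappa''\|A\|$, taking the infimum over $y^*$.
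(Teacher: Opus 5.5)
Your Step 1 is correct and is essentially the paper's proof of (i). You apply Hoffman's bound to the system defining $\mathcal Q(\overline y,\overline w)$, take the maximum over the finitely many index patterns, and note that only the equality residual $\|s-\overline s\|\le\|A\|\,\|y-y^*\|$ survives because $I\subseteq\overline I$.

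\textbf{Part (ii) has a genuine gap.} Your main route needs the face $F=\{(y',w')\in\mathcal Q(\overline y,\overline w)\mid w'_{J^C}=0\}$ to be nonempty. This is false in general, even for $y$ arbitrarily close to $\overline y$. Take $n=2$, $m_E=1$, $A=[1,\,-1]$, $K=\mathbb{R}^2_+$ (so $G=I_2$, $g=0$), $c=(1,1)^\top$, and $0<\epsilon<1$.
\begin{itemize}
\item For $y=1-\epsilon$, the point $A^\top y+c=(2-\epsilon,\epsilon)^\top$ is interior to $K$. Hence $I=\emptyset$, $w=0$, $\mathcal Q(y,w)=\{(y,0)\}$, and $J=\emptyset$.
\item For $\overline y=1+\epsilon$, you get $\overline I=\{2\}$, $\overline w=(0,-\epsilon)^\top$, and $\mathcal Q(\overline y,\overline w)=\{(1+\epsilon,(0,-\epsilon)^\top)\}$.
\end{itemize}
So $F$, which requires $w'=0$, is empty, while $\dist(y,\mathcal P(\overline y))=2\epsilon$. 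The underlying reason is that $\overline s$ need not lie in $\operatorname{Im}[A^\top\ G_J^\top]$ at all. The extreme point required by (ii) must in general use indices outside $J$, so no Hoffman bound on $F$ can work.

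Your fallback in Step 3 is the right idea and is what the paper does: support reduction inside $\mathcal Q(\overline y,\overline w)$, starting from the nearest point $(\widetilde y,\widetilde w)$. However, the essential estimate is only asserted. In a generic support reduction, a ratio-test step has the size of the current multiplier entries, not of $\|s-\overline s\|$. Appealing to ``constants from finitely many submatrices'' does not change this.

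What makes the steps small is the extremality of $(\widehat y,\widehat w)$:
\begin{itemize}
\item Since $[A^\top\ G_J^\top]$ has full column rank, the map $L_J(\Delta y,\Delta w)=(A^\top\Delta y-G^\top\Delta w,\ \Delta w_{J^C})$ is injective.
\item Hence every unit direction with $A^\top\Delta y=G^\top\Delta w$ satisfies $\|\Delta w_{J^C}\|\ge C_2$, where $C_2$ is the minimum of $\sigma_{\min}(L_{J'})$ over injective patterns $J'$.
\item The direction is supported on $\supp(\widetilde w^s)$, so some $j_0\in\supp(\widetilde w^s)\cap J^C$ has $|\Delta w_{j_0}|\ge C_2/\sqrt{m_I}$.
\item The two-sided ratio test then gives $\tau^s\le|\widetilde w^s_{j_0}|/|\Delta w_{j_0}|\le(\sqrt{m_I}/C_2)\,\|\widetilde w^s-\widehat w\|$, because $\widehat w_{j_0}=0$.
\end{itemize}
Each step therefore multiplies the distance to $(\widehat y,\widehat w)$ by at most $1+\sqrt{m_I}/C_2$. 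After at most $m_I$ steps you obtain $C'=C(1+\sqrt{m_I}/C_2)^{m_I}$.

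Also, the reduction must use directions supported on $\supp(\widetilde w^s)$ only, not on $J\cup\supp(\widetilde w^s)$ as you wrote. A zero entry cannot be moved in both directions, and the support must strictly shrink for termination.

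Without this injectivity argument, your ``two-phase estimate'' is a restatement of the claim rather than a proof of it.
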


\begin{proof}
We first note that, for any $(y',\ww')\in \mQ(\overline y,\overline \ww)$, one has $y'\in \mP(\overline y)$ by \eqref{eq:mP-Q}. Hence
\[
{\rm dist}(\widehat y, \mP(\overline y))
\le
{\rm dist}((\widehat y,\widehat \ww), \mQ(\overline y,\overline \ww)).
\]
We next derive the upper bound. For each $I\subseteq[m_I]$ and
$r\in\Rbb^n$, define
\[
\mQ_I(r)
:=
\left\{
(v,q)\in\Rbb^{m_E}\times\Rbb^{m_I}
\;\middle|\;
A^\top v-G^\top q=r,\quad
q\le0,\quad
q_{I^C}=0
\right\}.
\]
By Hoffman's error bound
\cite[Lemma~3.2.3]{facchinei2003finite}, for every
$I\subseteq[m_I]$, there exists a constant $H_I>0$, depending only
on the coefficient matrices and not on the right-hand side $r$, such
that, whenever $\mQ_I(r)\neq\emptyset$,
\begin{equation}\label{eq:Hoffman-QI}
\begin{aligned}
\dist\bigl((v,q),\mQ_I(r)\bigr)
\le H_I
\Big(
    &\|A^\top v-G^\top q-r\|
    +\|q_{I^C}\|
    +\|\max(q,0)\|
\Big)
\end{aligned}
\end{equation}
for every $(v,q)\in\Rbb^{m_E}\times\Rbb^{m_I}$.
Since there are only finitely many subsets of $[m_I]$, we may set
\[
C_1:=\max_{I\subseteq[m_I]}H_I<\infty.
\]
Thus, $C_1$ depends only on $A$ and $G$.
Now, let
\[
I_y:=I(A^\top y+c),
\qquad
\overline I:=I(A^\top\overline y+c),
\qquad
\overline r:=A^\top\overline y-G^\top\overline\ww.
\]
Then
\(
\mQ(\overline y,\overline\ww)
=
\mQ_{\overline I}(\overline r),
\)
and this set is nonempty because it contains
$(\overline y,\overline\ww)$.
Since $(\widehat y,\widehat\ww)\in\mQ(y,\ww)$, we have
\[
\widehat\ww\le0,
\qquad
\widehat\ww_{I_y^C}=0.
\]
Moreover, $I_y\subseteq\overline I$ implies
$\overline I^C\subseteq I_y^C$, and hence
\(
\widehat\ww_{\overline I^C}=0.
\)
Hence, applying \eqref{eq:Hoffman-QI} with $I = \overline I$, $r = \overline r$, $(v,q) = (\widehat y,\widehat\ww)$, and using $H_I \le C_1$, we obtain, for every $y^*\in\mP(\overline y)$ and every $\ww^*\in\mM(A^\top y^* + c)$, that
\begin{equation}\label{eq:disthaty_hata_Q_bary_barl}
\begin{aligned}
{\rm dist}((\widehat y,\widehat \ww), \mQ(\overline y,\overline \ww)) 
&\le C_1 \| A^\top (\widehat y - \overline y) - G^\top (\widehat \ww - \overline \ww) \| \\
&= C_1 \|A^\top  (y - y^*) - G^\top (\ww - \ww^*)\| \\
&= C_1 \| \Pi_{K}(A^\top  y + c) - \Pi_K(A^\top  y^* + c) \| \\
&\le C_1\| A^\top (y - y^*)\| \le C \|y - y^*\|,
\end{aligned}
\end{equation}
where $C = C_1 \|A\|$.
Here, the equalities follow from the defining relation of $\mQ$ and
the KKT conditions \eqref{eq:kkt-proj-K}.
Since \eqref{eq:disthaty_hata_Q_bary_barl} holds for every $y^*\in\mP(\overline y)$, taking the
infimum over $y^*$ proves \eqref{eq:dist_in1}.

It remains to prove \eqref{eq:dist_in2}. Let $(\widetilde y, \widetilde \ww)$ be the projection of $(\widehat y, \widehat \ww)$ onto $\mQ(\overline y, \overline \ww)$. If $(\widetilde y, \widetilde \ww)\in {\cal E}(\mQ(\overline y, \overline \ww))$, then \eqref{eq:dist_in2} follows immediately from \eqref{eq:disthaty_hata_Q_bary_barl}. Otherwise, we construct an extreme point of $\mQ(\overline y, \overline \ww)$ by a finite support-reduction procedure.

Set $(\widetilde y^0, \widetilde \ww^0)=(\widetilde y,\widetilde \ww)$. Since $(\widetilde y^0,\widetilde \ww^0)\notin {\cal E}(\mQ(\overline y,\overline \ww))$, there exists a nonzero direction $(\Delta y^0,\Delta \ww^0)$ with $\|(\Delta y^0,\Delta \ww^0)\|=1$ such that
\begin{equation}\label{eq:non-extreme-qbarybaralpha}
\left\{
\begin{array}{l}
A^\top  \Delta y^0 - G^\top  \Delta \ww^0 = 0,\\[3pt]
\Delta \ww^0_{{\rm supp}(\widetilde \ww^0)^C} = 0.
\end{array}
\right.
\end{equation}
Since $A$ has full row rank, \eqref{eq:non-extreme-qbarybaralpha} implies $\Delta \ww^0\neq 0$. Define
\[
\tau^0 = \min_{i\in {\rm supp}(\widetilde \ww^0), \, \Delta \ww^0_i \neq 0} \frac{|\widetilde \ww_i^0|}{|\Delta \ww_i^0|}
\]
and choose
\[
i^0 \in {\rm argmin}_{i\in {\rm supp}(\widetilde \ww^0), \, \Delta \ww^0_i \neq 0} \frac{|\widetilde \ww_i^0|}{|\Delta \ww_i^0|}.
\]
Then set
\begin{equation}\label{eq:update_tilde_y1_a1}
\left\{
\begin{aligned}
& \widetilde y^1 = \widetilde y^0 + {\rm sign}(\Delta \ww_{i^0}^0) \tau^0 \Delta y^0, \\
& \widetilde \ww^1 = \widetilde \ww^0 + {\rm sign}(\Delta \ww_{i^0}^0) \tau^0 \Delta \ww^0.
\end{aligned}
\right.
\end{equation}
By construction, $(\widetilde y^1,\widetilde \ww^1)\in \mQ(\overline y,\overline \ww)$ and
\[
{\rm supp}(\widetilde \ww^1)\subsetneq {\rm supp}(\widetilde \ww^0).
\]
In particular, we have $|{\rm supp}(\widetilde \ww^1)^C| \ge |{\rm supp}(\widetilde \ww^0)^C|+ 1$.

We now bound $\tau^0$. For each $J\subseteq[m_I]$, define the linear mapping
\[
L_J(\Delta y,\Delta\ww)
:=
\begin{bmatrix}
A^\top\Delta y-G^\top\Delta\ww\\
\Delta\ww_{J^C}
\end{bmatrix},
\]
and let
\[
\mathscr J_{\rm inj}
:=
\left\{
J\subseteq[m_I]
\;\middle|\;
L_J \text{ is injective}
\right\}.
\]
Note that $\mathscr J_{\rm inj}$ is nonempty because $L_{\emptyset}$ is injective when $A$ has full row rank.
Since there are only finitely many subsets of $[m_I]$, the collection
$\mathscr J_{\rm inj}$ is finite. Define
\[
C_2
:=
\min_{J\in\mathscr J_{\rm inj}}
\sigma_{\min}(L_J)>0.
\]
Note that $C_2$ depends only on $A$ and $G$.
Let $\widehat J:=\supp(\widehat\ww)$. Since
$(\widehat y,\widehat\ww)$ is an extreme point of $\mQ(y,\ww)$,
the extreme-point characterization \eqref{eq:Q-extreme} implies that
$L_{\widehat J}$ is injective. Hence
$\widehat J\in\mathscr J_{\rm inj}$. Using
$\|(\Delta y^0,\Delta\ww^0)\|=1$, we obtain
\[
\left\|
\begin{bmatrix}
A^\top\Delta y^0-G^\top\Delta\ww^0\\
\Delta\ww^0_{\widehat J^C}
\end{bmatrix}
\right\|
\ge C_2.
\]
Since the first block vanishes by
\eqref{eq:non-extreme-qbarybaralpha}, it follows that
\(
\|\Delta\ww^0_{\widehat J^C}\|\ge C_2.
\)
Moreover,
$\Delta\ww^0_{\supp(\widetilde\ww^0)^C}=0$, so there exists
\[
j_0\in
\supp(\widetilde\ww^0)\cap\widehat J^C
\]
such that
\(
|\Delta\ww^0_{j_0}|
\ge
{C_2}/{\sqrt{m_I}}.
\)
Therefore,
\begin{equation}\label{eq:upper_bound_tau0}
\tau^0 \le \frac{|\widetilde \ww_{j_0}^0|}{|\Delta \ww_{j_0}^0|}
\le \frac{\sqrt{m_I}}{C_2} |\widetilde \ww_{j_0}^0|.
\end{equation}
Since $j_0\in \widehat J^C = {\rm supp}(\widehat \ww)^C$, one has $\widehat \ww_{j_0}=0$ and \(
|\widetilde \ww_{j_0}^0|
\le
\|\widetilde \ww^0-\widehat \ww\|\), which, together with \eqref{eq:upper_bound_tau0}, implies 
\begin{equation*}
\begin{aligned}
\|(\widehat y, \widehat \ww) - (\widetilde y^1, \widetilde \ww^1)\|
\le{}& \|(\widehat y, \widehat \ww) - (\widetilde y^0, \widetilde \ww^0)\| + \|(\widetilde y^0, \widetilde \ww^0) - (\widetilde y^1, \widetilde \ww^1)\| \\
\le{}& \|(\widehat y, \widehat \ww) - (\widetilde y^0, \widetilde \ww^0)\| + \tau^0 \\
\le{}& \|(\widehat y, \widehat \ww) - (\widetilde y^0, \widetilde \ww^0)\| + \frac{\sqrt{m_I}}{C_2} \|\widetilde \ww^0 - \widehat \ww\| \\
\le{}& \Big(1 + \frac{\sqrt{m_I}}{C_2}\Big) \|(\widehat y, \widehat \ww) - (\widetilde y^0, \widetilde \ww^0)\|.
\end{aligned}
\end{equation*}
Recall that $(\widetilde y^0, \widetilde \ww^0)$ is the projection of $(\widehat y, \widehat \ww)$ onto $\mQ(\overline y, \overline \ww)$, \eqref{eq:dist_in1} ensures that
\[
\|(\widehat y, \widehat \ww) - (\widetilde y^0, \widetilde \ww^0)\|
=
{\rm dist}((\widehat y, \widehat \ww), \mQ(\overline y, \overline \ww))
\le
C\,{\rm dist}(y,\mP(\overline y)).
\]
Hence
\[
\|(\widehat y, \widehat \ww) - (\widetilde y^1, \widetilde \ww^1) \|
\le
C \Big(1+\frac{\sqrt{m_I}}{C_2}\Big){\rm dist}(y,\mP(\overline y)).
\]
If $(\widetilde y^1,\widetilde\ww^1)$ is not an extreme point of
$\mQ(\overline y,\overline\ww)$, we repeat the same
support-reduction construction. At every subsequent step, the same
argument gives
\[
\begin{aligned}
\|(\widehat y,\widehat\ww)
  -(\widetilde y^{s+1},\widetilde\ww^{s+1})\|
\le
\left(1+\frac{\sqrt{m_I}}{C_2}\right)
\|(\widehat y,\widehat\ww)
  -(\widetilde y^s,\widetilde\ww^s)\|.
\end{aligned}
\]
The support of $\widetilde\ww^s$ decreases strictly at each step. If the support becomes empty, Theorem~\ref{thm:nons-gen-jac},
together with the full row rank of $A$, implies that the current point
is extreme. Hence, the procedure terminates after at most $m_I$ steps at some extreme point
\(
(y^{\rm e},\ww^{\rm e})
\in{\cal E}\bigl(\mQ(\overline y,\overline\ww)\bigr)
\)
satisfying
\[
\|\widehat y-y^{\rm e}\|
\le
\|(\widehat y,\widehat\ww)-(y^{\rm e},\ww^{\rm e})\|
\le
C\left(1+\frac{\sqrt{m_I}}{C_2}\right)^{m_I}
\dist\bigl(y,\mP(\overline y)\bigr).
\]
Thus, \eqref{eq:dist_in2} holds with
\(
C'
=
C\left(1+\frac{\sqrt{m_I}}{C_2}\right)^{m_I}.
\)
\hfill$\Box$
\end{proof}

Recall from Lemma \ref{lemma:error-bound} that
\[
\mP^*=\{y^* \in \Rbb^{m_E} \mid \nabla\varphi(y^*)=0\} 
\]
is the optimal solution set of the dual problem \eqref{prob:dual-K-lin-proj}, and let $x^* \in \Rbb^n$ be the unique optimal solution of the primal projection problem \eqref{prob:K-lin-proj}. Then, \eqref{eq:grad-dual} and \eqref{eq:mP} imply that
\begin{equation}\label{eq:mP*}
    \Pi_K(A^\top y^*+c)=x^*
    \quad\mbox{and}\quad
    {\cal P}(y^*) = {\cal P}^*
    \quad  \forall \, y^*\in\mP^*.
\end{equation}
We define the set of extreme-point representatives of the optimal
projection-equivalence class by
\begin{equation}\label{eq:def-hatmP-star}
\widehat{\mP}^{*}
:=
\widehat{\mP}(y^{*})
\quad \text{for any } y^{*}\in\mP^{*}.
\end{equation}
Proposition~\ref{prop:mQ-property}(iii) and
\eqref{eq:def-hatmP} imply that this definition is independent of the choice of \(y^{*}\in\mP^{*}\).
We also denote the optimal active set by
\[
I^* := \left\{ i\in [m_I] \mid G_ix^* = g_i\right\}
= I(A^\top  y^* + c) \quad \forall\, y^*\in {\cal P}^*.
\]
The next lemma shows that the active index set is locally stable around the optimal solution set.

\begin{lemma} \label{lemma:mP*-I-stable}
There exists a positive constant $R_0$ such that $$I(A^\top y+c)\subseteq I^* \quad \forall\, y\in \left\{y \in \Rbb^{m_E} \mid \dist(y, \mP^*) \le R_0 \right\}.
$$
\end{lemma}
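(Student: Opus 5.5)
The plan is a continuity argument combined with the fact that $\Pi_K(A^\top y^*+c)=x^*$ is the same point for every $y^*\in\mP^*$. Inactive constraints at $x^*$ have a uniform positive slack, and nearby projections keep part of that slack. Set
\[
\delta:=\min_{i\notin I^*}\bigl(G_ix^*-g_i\bigr)>0,
\]
with the convention $\delta=+\infty$ (and any $R_0$ works) if $I^*=[m_I]$. The minimum is over finitely many indices, and each term is positive because $x^*\in K$ and $i\notin I^*$.

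Fix $y$ with $\dist(y,\mP^*)\le R_0$. Since $\mP^*$ is a nonempty closed polyhedron (Lemma~\ref{lemma:error-bound} together with Proposition~\ref{prop:mQ-property}(i) via \eqref{eq:mP*}), there is $y^*\in\mP^*$ with $\|y-y^*\|=\dist(y,\mP^*)$. The projection $\Pi_K$ is nonexpansive, so
\[
\|\Pi_K(A^\top y+c)-x^*\|=\|\Pi_K(A^\top y+c)-\Pi_K(A^\top y^*+c)\|\le\|A\|\,\|y-y^*\|\le \|A\|R_0 .
\]
For each $i\notin I^*$ this gives
\[
G_i\Pi_K(A^\top y+c)-g_i\ \ge\ (G_ix^*-g_i)-\|G_i\|\,\|A\|R_0\ \ge\ \delta-\|G\|\,\|A\|R_0 .
\]
Now choose
\[
R_0:=\frac{\delta}{2(\|G\|\|A\|+1)} .
\]
With this choice the slack of every $i\notin I^*$ stays positive, so no such index is active at $\Pi_K(A^\top y+c)$. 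That is exactly $I(A^\top y+c)\subseteq I^*$.

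There is no real obstacle here. The only point needing care is that the bound must be uniform over all of $\mP^*$, which may be unbounded. This is handled because every element of $\mP^*$ yields the same primal point $x^*$, so $\delta$ does not depend on which $y^*$ is chosen. If attainment of the distance were in doubt, I would instead pick $y^*$ with $\|y-y^*\|\le 2\dist(y,\mP^*)$ and halve $R_0$ accordingly.
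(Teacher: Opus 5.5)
Your proof is correct and takes essentially the same route as the paper. Nonexpansiveness of $\Pi_K$ bounds the distance from $\Pi_K(A^\top y+c)$ to the common primal point $x^*$ by $\|A\|\,\dist(y,\mP^*)$, and $R_0$ is then chosen below the minimal inactive slack at $x^*$ divided by $\|G\|\|A\|$. The only cosmetic difference is how $\dist(y,\mP^*)$ enters: the paper does not need the distance to be attained. Since the left-hand side of the bound does not depend on $y^*$, it simply takes the infimum over $y^*\in\mP^*$.
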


\begin{proof}
For any $y\in\Rbb^{m_E}$, $y^*\in\mP^*$, and $i\in [m_I]$, the nonexpansiveness of $\Pi_K$ implies
\[
\|G_i(\Pi_{K}(A^\top  y + c) - x^*)\|
=
\|G_i(\Pi_K(A^\top  y+c) - \Pi_K(A^\top   y^* + c))\|
\le \|G\| \|A\| \|y - y^*\|.
\]
Hence
\[
\|G_i(\Pi_{K}(A^\top  y + c) - x^*)\|
\le \|G\| \|A\| \dist(y,\mP^*).
\]
If $I^* = [m_I]$, the conclusion is immediate. Otherwise, for any $i\in[m_I]\backslash I^*$, one has $G_i x^* - g_i>0$. Let
\[
R := \min_{i\in [m_I]\backslash I^*} (G_i x^* - g_i) >0.
\]
Then
\[
G_i \Pi_K(A^\top y + c) - g_i
\ge
G_i x^* - g_i - \|G\| \|A\| \dist(y,\mP^*)
\ge
R - \|G\| \|A\| \dist(y,\mP^*).
\]
Therefore, if $0 < R_0 < R/(\|G\| \|A\|)$, then for all $y$ satisfying $\dist(y,\mP^*) \le R_0$,
\[
G_i\Pi_K(A^\top y + c) - g_i > 0 \quad \forall\, i\in[m_I]\backslash I^*.
\]
This proves that $I(A^\top y+c)\subseteq I^*$. \hfill $\Box$
\end{proof}

Combining Lemma~\ref{lemma:dist-hat-y-opt} with the local stability of the active set near $\mP^*$, we obtain the following error bound estimate for the correction step in a neighborhood of the optimal solution set.

\begin{proposition}\label{prop:dist-hatyhatP-yP}
There exist constants $ C > 0 $ and $ R_0 > 0 $ such that for all $(y,\ww)$ satisfying $\dist(y, \mathcal{P}^*) \le R_0$, $\ww\in {\cal M}(A^\top y + c)$, and any $(\widehat{y}, \widehat{w}) \in {\cal E}(\mathcal{Q}(y, \ww))$, it holds that
\begin{equation}\label{eq:dist-hatyhatP-yP}
\dist(\widehat{y}, \widehat{\mathcal{P}}^*) \le C \dist(y, \mathcal{P}^*),
\end{equation}
where $\widehat{\mathcal{P}}^*$ is defined as in \eqref{eq:def-hatmP-star}.
\end{proposition}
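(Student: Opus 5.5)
The plan is to combine Lemma~\ref{lemma:mP*-I-stable} with Lemma~\ref{lemma:dist-hat-y-opt}(ii), taking the reference point to be an arbitrary optimal dual solution.

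First, choose $R_0>0$ as in Lemma~\ref{lemma:mP*-I-stable}. Then every $y$ with $\dist(y,\mP^*)\le R_0$ satisfies $I(A^\top y+c)\subseteq I^*$.

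Next, fix any $\overline y\in\mP^*$ and any $\overline\ww\in\mM(A^\top\overline y+c)$. By \eqref{eq:mP*}, $\mP(\overline y)=\mP^*$ and $I(A^\top\overline y+c)=I^*$. The inclusion hypothesis $I(A^\top y+c)\subseteq I(A^\top\overline y+c)$ of Lemma~\ref{lemma:dist-hat-y-opt} therefore holds for all $y$ in the $R_0$-neighborhood. Applying Lemma~\ref{lemma:dist-hat-y-opt}(ii) to any $\ww\in\mM(A^\top y+c)$ and any $(\widehat y,\widehat\ww)\in{\cal E}(\mQ(y,\ww))$ yields a point $(y^{\rm e},\ww^{\rm e})\in{\cal E}(\mQ(\overline y,\overline\ww))$ with
\[
\|\widehat y-y^{\rm e}\|\le C'\,\dist\bigl(y,\mP(\overline y)\bigr)=C'\,\dist(y,\mP^*).
\]

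By \eqref{eq:def-hatmP} and \eqref{eq:def-hatmP-star}, $y^{\rm e}\in\widehat{\mP}(\overline y)=\widehat{\mP}^*$. Hence
\[
\dist(\widehat y,\widehat{\mP}^*)\le\|\widehat y-y^{\rm e}\|\le C'\,\dist(y,\mP^*),
\]
which gives the claim with $C:=C'$. The constant $C'$ depends only on $A$ and $G$, so it is uniform in $y$, $\ww$ and the chosen extreme point.

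The only delicate point is checking that the hypotheses of Lemma~\ref{lemma:dist-hat-y-opt} are satisfied uniformly. This needs three facts. The active-set inclusion comes from Lemma~\ref{lemma:mP*-I-stable}. The identity $\mP(\overline y)=\mP^*$ is \eqref{eq:mP*}. The independence of $\widehat{\mP}^*$ from the choice of $\overline y$ and $\overline\ww$ follows from Proposition~\ref{prop:mQ-property}(iii). Once these are in place, the proof is a direct substitution into Lemma~\ref{lemma:dist-hat-y-opt}.
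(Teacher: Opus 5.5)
Your proposal is correct and follows the paper's proof: fix $y^*\in\mP^*$, use Lemma~\ref{lemma:mP*-I-stable} to get $I(A^\top y+c)\subseteq I^*$, and apply Lemma~\ref{lemma:dist-hat-y-opt} with $\overline y=y^*$ together with $\mP(y^*)=\mP^*$ and $\widehat{\mP}(y^*)=\widehat{\mP}^*$. You are also right to cite part (ii) of that lemma specifically, since that is the part the argument actually uses; the paper cites the lemma without naming the part.
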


\begin{proof}
Take any $y^*\in \mP^*$. By Lemma~\ref{lemma:mP*-I-stable}, if $\dist(y,\mP^*)\le R_0$, then
\[
I(A^\top y+c)\subseteq I(A^\top y^*+c)=I^*.
\]
Hence, by Lemma~\ref{lemma:dist-hat-y-opt}, applied with $\overline y=y^*$ and an arbitrary $\overline \ww\in\mM(A^\top y^*+c)$, we have
\[
\dist(\widehat y,\widehat{\mP}(y^*)) \le C \dist(y,\mP(y^*)).
\]
This is exactly \eqref{eq:dist-hatyhatP-yP} since $\mP(y^*)=\mP^*$ and $\widehat{\mP}(y^*)=\widehat{\mP}^*$ by \eqref{eq:mP*} and \eqref{eq:def-hatmP-star}.
\hfill $\Box$
\end{proof}

\section{Dual semismooth Newton methods with extreme-point correction} \label{sec:alg}

In this section, we exploit the variational theory developed in Section~\ref{sec:theory} to construct dual semismooth Newton methods for problem~\eqref{prob:dual-K-lin-proj}. The key insight is that, although the generalized Jacobian of $\nabla\varphi$ may be singular at the current iterate $y$, Theorem~\ref{thm:nons-gen-jac} guarantees that a nonsingular generalized Jacobian is always available at an extreme point of the lifted projection-equivalent set $\mQ(y,w)$, while Proposition~\ref{prop:dist-hatyhatP-yP} ensures that the associated correction is compatible with the local Newton mechanism. Based on these results, we first develop a local inexact semismooth Newton method with extreme-point correction and establish its local superlinear convergence without imposing nonsingularity or regularity assumptions. We then globalize the method by combining a monotone extreme-point correction with a Wolfe line search, thereby obtaining a globally convergent semismooth Newton framework that eventually recovers the local superlinear rate.

\subsection{Dual semismooth Newton method with extreme-point correction and its local convergence}\label{sec:sub-alg-local}

We first derive a local semismooth Newton framework from the variational characterization in Theorem~\ref{thm:nons-gen-jac}. At each iteration, the current pair $(y^k,w^k)$ is first corrected to an extreme point $(\widehat y^k,\widehat w^k)\in{\cal E}(\mQ(y^k,w^k))$. By Theorem~\ref{thm:nons-gen-jac}, this correction yields a representative at which a nonsingular generalized Jacobian of $\nabla\varphi$ is available. The subsequent Newton step is then computed at the corrected point. In this way, the local semismooth Newton linearization is well-posed by construction rather than by assumption.

\begin{algorithm}[H]
\caption{An inexact dual semismooth Newton method with extreme-point correction for \eqref{prob:dual-K-lin-proj}}
\label{alg:d-SN-poly}
\begin{algorithmic}[1]
\State Initialize $y^0$, $w^0 \in \mM(A^\top y^0+c)$, $\vartheta >0$, and a nonnegative sequence $\{\eta^k\}$.
\For{$k=0,1,\ldots$}
\State Identify an extreme point $(\widehat y^k, \widehat w^k) \in {\cal E}(\mQ(y^k,w^k))$. Let $J^{k} = {\rm supp}(\widehat w^k)$ and
\[
W^k = A\big(I_n - G_{J^k}^\top (G_{J^k}G_{J^k}^\top )^{-1}G_{J^k}\big)A^\top  \in \partial^2 \varphi(\widehat y^k).
\]
\State Find an approximate solution $d^k$ to
\[
W^k d + \nabla \varphi(\widehat y^k)=0
\]
such that
\[
\| W^k d^k + \nabla \varphi(\widehat y^k)\| \le \min(\eta^k, \|\nabla \varphi(\widehat y^k)\|^\vartheta)\| \nabla \varphi(\widehat y^k)\|.
\]
\State Set
\[
y^{k+1}=\widehat y^k+d^k
\quad\mbox{and choose}\quad
w^{k+1} \in \mM(A^\top y^{k+1}+c).
\]
\EndFor
\end{algorithmic}
\end{algorithm}

To implement the correction step in Algorithm~\ref{alg:d-SN-poly}, one needs to compute an extreme point of $\mQ(y^k,w^k)$ from the current feasible pair $(y^k,w^k)\in\mQ(y^k,w^k)$. The following support-reduction procedure provides such a construction, which is a standard technique in linear programming. See, for example, the proof of \cite[Theorem 2.6]{bertsimas1997introduction}.

\begin{algorithm}[H]
\caption{Support-Reduction Procedure for Extreme Point Identification}
\label{alg:find-ex-Q-1}
\begin{algorithmic}[1]
\State Initialize $(y,w) = (\bar y, \bar w) \in \mQ(\bar y,\bar w)$ and set $J=\supp(w)$.
\While{$\begin{bmatrix}  A^\top & G_J^\top  \end{bmatrix}$ does not have full column rank}
\State Find $(\Delta y,\Delta w_J)\neq 0$ such that
\[
A^\top \Delta y- G_J^\top \Delta w_J=0.
\]
\State Find $\tau\neq 0$ such that
\[
w_J+\tau\Delta w_J\le 0
\quad\mbox{and}\quad
\supp(w_J+\tau\Delta w_J)\subsetneq J.
\]
\State Update
\[
y= y+\tau \Delta y,
\quad
w = (w_J+\tau\Delta w_J;\,w_{J^C}).
\]
\State Update $J= \supp(w)$.
\EndWhile
\State Output $(\widehat y,\widehat w)=(y,w)$.
\end{algorithmic}
\end{algorithm}

\begin{proposition}\label{prop:alg-find-ex-Q-1}
For any given $(\bar y, \bar w)$ with $\bar w\in\mM(A^\top \bar y+c)$, Algorithm~\ref{alg:find-ex-Q-1} is well defined and terminates in at most $|\supp(\bar w)|$ iterations at a point $(\widehat y,\widehat w)\in{\cal E}(\mQ(\bar y, \bar w))$, and the total computational cost is bounded by ${\cal O}\!\left(m_I n (m_E + m_I)\min\{m_E + m_I,\, n\}\right)$. 
\end{proposition}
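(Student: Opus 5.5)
The proof has three parts: invariants of the loop, finite termination with an extreme-point output, and the cost count. The driving invariant is that every iterate $(y,w)$ stays in $\mQ(\bar y,\bar w)$. Initially $(\bar y,\bar w)\in\mQ(\bar y,\bar w)$ trivially.

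Suppose the current pair lies in $\mQ(\bar y,\bar w)$ with $J=\supp(w)$, and $[A^\top\ G_J^\top]$ is rank deficient. Then a nonzero $(\Delta y,\Delta w_J)$ with $A^\top\Delta y=G_J^\top\Delta w_J$ exists. Because $A$ has full row rank, $\Delta w_J\neq0$. Since $w_J<0$ componentwise, we choose the sign and magnitude of $\tau$ by a ratio test. Let $\tau_0=\min\{|w_i|/|\Delta w_i| : i\in J,\ \Delta w_i\neq0\}$, attained at some $i_0$, and set $\tau=\operatorname{sign}(\Delta w_{i_0})\tau_0$. Then $w_J+\tau\Delta w_J\le0$ and the $i_0$ component vanishes, so the support strictly shrinks. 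This is the argument already used in the proof of Lemma~\ref{lemma:dist-hat-y-opt}.

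The update keeps $A^\top(y'-\bar y)=G^\top(w'-\bar w)$ because the increment satisfies $A^\top\tau\Delta y-G^\top(\tau\Delta w)=0$, where $\Delta w$ is extended by zero off $J$. It also keeps $w'\le0$, and $w'_{I^C}=0$ since the support only shrinks inside $J\subseteq I$. Here $I=I(A^\top\bar y+c)$, and $J\subseteq I$ by complementarity in \eqref{eq:kkt-proj-K}. So the new pair lies in $\mQ(\bar y,\bar w)$, and each step is well defined.

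Since $|J|$ decreases by at least one per iteration, the loop runs at most $|\supp(\bar w)|$ times. If $J=\emptyset$, the matrix reduces to $A^\top$, which has full column rank, so the loop must stop. At exit, $[A^\top\ G_J^\top]$ has full column rank with $J=\supp(w)$. We have $w\in\mM(A^\top y+c)$ by Proposition~\ref{prop:mQ-property}(i), since $(y,w)\in\mQ(y,w)=\mQ(\bar y,\bar w)$ by part (iii). Theorem~\ref{thm:nons-gen-jac} ((ii)$\Rightarrow$(i)) then gives $(y,w)\in{\cal E}(\mQ(y,w))={\cal E}(\mQ(\bar y,\bar w))$.

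For the cost, each iteration does a rank test and extracts a null vector of the $n\times(m_E+|J|)$ matrix $[A^\top\ -G_J^\top]$. Gaussian elimination or a QR factorization of an $n\times p$ matrix, with $p\le m_E+m_I$, costs $\mathcal O(np\min\{n,p\})$. The ratio test and the updates cost only $\mathcal O(m_I+m_E n)$, which is dominated. Multiplying by at most $m_I$ iterations gives $\mathcal O(m_I n(m_E+m_I)\min\{m_E+m_I,n\})$.

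The main obstacle is mostly bookkeeping: confirming that the null vector found at the same time certifies rank deficiency, and that this factorization cost dominates every other operation. A subtler point is that $\Delta w_J\neq0$ relies on the full row rank of $A$. That is what guarantees the ratio test is nonvacuous, and hence that the support genuinely shrinks.
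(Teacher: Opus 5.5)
Your proposal is correct and follows essentially the same route as the paper. The paper's proof invokes the support-reduction argument from the proof of \cite[Theorem~2.6]{bertsimas1997introduction}: the support of $w$ strictly shrinks on each pass, giving at most $|\supp(\bar w)|\le m_I$ iterations, and this is multiplied by the cost of the null-space computation in line~3. You make the cited details explicit (the signed ratio test, the invariance of $\mQ(\bar y,\bar w)$, and extremality of the output via Theorem~\ref{thm:nons-gen-jac}), which gives a self-contained version of the same argument.
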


\begin{proof}
    The well-definedness and finite termination of Algorithm~2
    follow directly from the support-reduction argument in the proof of
    \cite[Theorem~2.6]{bertsimas1997introduction}.
    Indeed, the support of $w$ decreases by at least one at each iteration,
    and hence the algorithm terminates in at most
    $|\supp(\bar w)|\le m_I$ iterations.
    By standard linear algebra, the cost of solving the linear system in line~3 is ${\cal O}\!\left(n (m_E + m_I)\min\{m_E + m_I,\, n\}\right)$, and the combined cost in line~4 and line~5 is ${\cal O}(m_I)$. Hence the total computational cost is bounded by ${\cal O}\!\left(m_I n (m_E + m_I)\min\{m_E + m_I,\, n\}\right)$. \hfill $\Box$
\end{proof}

The following example, concerning projection onto the Birkhoff polytope, shows that in structured problems the actual cost of identifying an extreme point can be substantially smaller than the worst-case upper bound obtained in Proposition~\ref{prop:alg-find-ex-Q-1}.

\begin{example}\label{ex:ndsm}
Consider the following problem of computing the nearest doubly stochastic matrix:
\begin{equation*}
	 \begin{array}{cl}
		\min\limits_{X = (x_{ij}) \in \Rbb^{m\times m}} & \displaystyle \frac{1}{2} \| X-C\|^2 \\ [8pt]
		\mbox{s.t.} & \sum_{i=1}^{m}x_{ij}=1,\quad j=1,\cdots,m,\\ [2pt]
				&   \sum_{j=1}^{m}x_{ij}=1,\quad i=1,\cdots,m,\\[2pt]
		& X\ge 0,
	\end{array}
\end{equation*}
where $C\in \Rbb^{m\times m}$ is given data. This problem can be reformulated into the structure of \eqref{prob:K-lin-proj} by vectorizing $X$ as $x\in\Rbb^{m^2}$ and $C$ as $c\in\Rbb^{m^2}$, and setting
\begin{equation*}
A=\begin{bmatrix}
\one_m^\top  & 0 & \cdots & 0\\
0 & \one_m^\top  & \cdots & 0\\
\vdots& \vdots & \ddots & 0\\
0 & 0 & \cdots & \one_m^\top \\
I'& I' & \cdots & I'
\end{bmatrix}\in \Rbb^{(2m-1)\times m^2},\quad b=\one_{2m-1}\in\Rbb^{2m-1}, \quad G = I_{m^2}, \quad \mbox{and} \quad g = 0 \in \Rbb^{m^2}.
\end{equation*}
Here, $I'=\begin{bmatrix} I_{m-1} & 0\end{bmatrix}\in \Rbb^{(m-1)\times m}$.

Given any $y \in \mathbb{R}^{2m-1}$, the special structure of $G$ allows us to write the associated multiplier explicitly as
\[
w = A^\top  y + c - \Pi_{\Rbb^{m^2}_+}(A^\top  y + c).
\]
As a consequence, the main step of Algorithm~\ref{alg:find-ex-Q-1}, namely line~3, reduces to finding $\Delta y \in \mathbb{R}^{2m-1}$ such that
\begin{equation*}
(A^\top )_{J^C}\,\Delta y = 0
\quad \mbox{with} \quad
J = \operatorname{supp}(w)
= \{ j \in [m^2] \mid (A^\top  y + c)_j < 0 \}.
\end{equation*}
We note that, up to a sign change in one vertex class,
$(A^\top)_{J^C}$ is the reduced incidence matrix of the bipartite
graph $\mathcal G_{J^C}$. Hence, a nonzero vector in 
${\rm Null}((A^\top)_{J^C})$ can be obtained from the connected components
of $\mathcal G_{J^C}$ in
$\mathcal O(m+|J^C|)=\mathcal O(m^2)$ time. Such a vector vanishes on
every edge whose endpoints belong to the same connected component.
Therefore, each support-reduction step moves at least one edge joining
two distinct components from $J$ to $J^C$, and hence reduces the
number of connected components by at least one. Since
$\mathcal G_{J^C}$ has $2m$ vertices, the algorithm terminates in at
most $2m-1$ steps. As each step costs $\mathcal O(m^2)$, the total
complexity is bounded by $\mathcal O(m^3)$, substantially improving
upon the generic bound $\mathcal O(m^8)$ in
Proposition~\ref{prop:alg-find-ex-Q-1}.
\end{example}

Before proceeding to the convergence analysis, we present a simple example together with a graphical illustration of Algorithm~\ref{alg:d-SN-poly}. This example demonstrates both the necessity and the effectiveness of the extreme-point correction step. Specifically, the Jacobian at the current point is singular, whereas after correcting to an extreme point of the projection-equivalent set, a nonsingular generalized Jacobian becomes available and the exact semismooth Newton step reaches the optimal solution in one iteration.

\begin{example}\label{ex:one-step}
Consider the problem
\[
\begin{array}{cl}
\min & \displaystyle \frac{1}{2} \| x-(0,1,0)^\top \|^2 \\ [5pt]
\mbox{s.t.} & \begin{bmatrix}1 & 1 & 0 \\ 1 & 0 & 1\end{bmatrix} x = \begin{bmatrix}1 \\ 0\end{bmatrix}, \quad x\ge 0.
\end{array}
\]
The unique primal solution is $(0,1,0)^\top $, while the dual optimal solution set is
\[
\mP^*=\{y\in \Rbb^2\mid \nabla\varphi(y)=0\}=\{(0,t)^\top \mid t\le 0\}.
\]
For any $a\in(-0.5,0)$ and $b\in(-0.5,0.5)$, let
\(
y^0=(a,-1+b)^\top ,
\)
which lies in a neighborhood of $y^*=(0,-1)^\top \in\mP^*$. One checks directly that $\nabla\varphi$ is differentiable at $y^0$, but its Jacobian is singular and given by $\begin{bmatrix}
1&0\\
0&0
\end{bmatrix}$. Hence the classical Newton method is not directly covered by its standard local theory. By contrast, our method first corrects $y^0$ to
\(
\widehat y^0=(a,0)^\top ,
\)
at which the generalized Jacobian chosen in Algorithm~\ref{alg:d-SN-poly} is 
\[
W=
\begin{bmatrix}
1&0\\
0&1
\end{bmatrix}
\in \partial^2\varphi(\widehat y^0),
\]
which is nonsingular. The resulting Newton step gives
\[
y^1=\widehat y^0-W^{-1}\nabla\varphi(\widehat y^0)=(0,0)^\top \in \mP^*.
\]
Therefore, the method reaches an optimal dual solution in a single iteration. Figure~\ref{fig:vis-alg1} visualizes the correction--Newton mechanism for the specific initial point $y^0=(-0.25,-0.75)^\top $.
\begin{figure}[H]
\begin{center}
\begin{tikzpicture}[scale=2]
 		\draw[->] (-1.5,0) -- (1,0);
 		\draw[->] (0,-1.5) -- (0,1);

 		\node at (0,0) [below right] {O};

 		\draw[red, thick] (0,0) -- (0,-1.5);

 		\fill (0,-1) circle (1pt);
 		\node at (0,-1) [right]{$y^*=(0,-1)$};

 		\fill (-0.25,-0.75) circle (1pt);
 		\node at (-0.25,-0.75) [left] {$y^0=(-0.25,-0.75)$};

 		\draw[blue, thick] (-0.25,0) -- (-0.25,-1.5);
 		\node at (-0.25,-1.5) [left] {${\mathcal P}(y^0)$};

 		\fill (-0.25,0) circle (1pt);
 		\node at (-0.25,0) [above left] {$\widehat y^0=(-0.25,0)$};

 		\fill (0,0) circle (1pt);
 		\node at (0,0) [above right] {$y^1=(0,0)$};

 		\draw[dashed,yellow,very thick, -{Stealth[length=5pt,width=8pt]}] (-0.25,-0.75) -- node[left, scale=0.6, violet]{Correction}(-0.25,0);
 		\draw[dashed,yellow,very thick,-{Stealth[length=5pt,width=8pt]}] (-0.25,0) --node[above, scale=0.6, violet]{Newton} (0,0);
\end{tikzpicture}
\end{center}
\caption{Illustration of the correction--Newton mechanism in Algorithm~\ref{alg:d-SN-poly}.}
\label{fig:vis-alg1}
\end{figure}
\end{example}

Now, we investigate the local convergence properties of Algorithm~\ref{alg:d-SN-poly}. Motivated by Theorem~\ref{thm:nons-gen-jac}, we collect all index sets $J$ for which $\begin{bmatrix}  A^\top & G_J^\top  \end{bmatrix}$ has full column rank, and denote the collection by
\[
{\cal J} = \{ J \subseteq [m_I] \mid \begin{bmatrix}  A^\top & G_J^\top  \end{bmatrix} \mbox{ has full column rank} \}.
\]
The corresponding nonsingular Jacobian-like matrices are gathered into the finite set
\begin{equation}\label{eq:Jacobian-like-W}
{\cal W}
=
\left\{
W\in {\mathbb S}^{m_E}
\;\middle|\;
W = A\big(I_n - G_J^\top (G_JG_J^\top )^{-1}G_J\big)A^\top ,\ \ J\in {\cal J}
\right\}.
\end{equation}
We then establish the following lemma, which shows the uniform boundedness of the inverses of the ``Jacobian-like'' matrices in ${\cal W}$.

\begin{lemma} \label{lemma:unibounded-W}
    Every matrix in ${\cal W}$ is symmetric positive definite, and ${\cal W}$ is finite. Consequently, there exists a constant $C_{\cal W} > 0$ such that
    \[
        \|W^{-1}\| \le C_{\cal W} \quad \forall\, W\in {\cal W}.
    \]
\end{lemma}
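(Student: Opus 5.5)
The argument has three short steps: positive definiteness of each element, finiteness of the collection, and the resulting uniform bound on the inverses.

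\emph{Positive definiteness.} Fix $J\in\mathcal J$ and write $U_J:=I_n-G_J^\top(G_JG_J^\top)^{-1}G_J$. Since $[A^\top\ G_J^\top]$ has full column rank, $G_J$ has full row rank, so $G_JG_J^\top$ is invertible and $U_J$ is well defined. Moreover, $U_J$ is the orthogonal projector onto $\tNu(G_J)$, hence symmetric positive semidefinite. It follows that $W=AU_JA^\top$ is symmetric, and for any $d$ we have
\[
d^\top Wd=\|U_JA^\top d\|^2 .
\]
Now suppose $d^\top Wd=0$. Then $A^\top d\in\tNu(U_J)=\tIm(G_J^\top)$, i.e.\ $A^\top d=G_J^\top v$ for some $v$. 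Full column rank of $[A^\top\ G_J^\top]$ then forces $d=0$. This is the same computation as in the proof of (ii)$\iff$(iii) of Theorem~\ref{thm:nons-gen-jac}. Hence $W\succ 0$.

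\emph{Finiteness.} The collection $\mathcal J$ consists of subsets of $[m_I]$, so it is finite, with at most $2^{m_I}$ elements. Each $J\in\mathcal J$ determines exactly one matrix $W$, so $\mathcal W$ is finite as well.

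\emph{Uniform bound.} For each $W\in\mathcal W$, positive definiteness gives $\|W^{-1}\|=1/\lambda_{\min}(W)<\infty$. Setting
\[
C_{\mathcal W}:=\max_{W\in\mathcal W}\frac{1}{\lambda_{\min}(W)},
\]
we have a maximum over a finite nonempty set, so $C_{\mathcal W}$ is a finite positive constant with $\|W^{-1}\|\le C_{\mathcal W}$ for all $W\in\mathcal W$. The set $\mathcal W$ is nonempty because $\emptyset\in\mathcal J$ (as $A$ has full row rank), which gives $W=AA^\top\in\mathcal W$.

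No step is a real obstacle. The only points needing care are checking that $G_J$ has full row rank, so that $W$ is well defined, and observing that the constant depends only on $A$ and $G$.
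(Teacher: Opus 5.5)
Your proposal is correct and follows essentially the same route as the paper, which gets positive definiteness from Theorem~\ref{thm:nons-gen-jac}, finiteness from the finitely many index sets $J\subseteq[m_I]$, and the bound from a maximum over a finite set. You also make explicit two points the paper leaves implicit: that $\mathcal W$ is nonempty (via $J=\emptyset$) and that $\|W^{-1}\|=1/\lambda_{\min}(W)$.
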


\begin{proof}
The symmetric positive definiteness of each $W\in {\cal W}$ follows from its definition and Theorem~\ref{thm:nons-gen-jac}. Since there are only finitely many subsets of $\{1,\ldots,m_I\}$, the index collection ${\cal J}$ is finite, and hence so is ${\cal W}$. The uniform bound on $\|W^{-1}\|$ follows immediately. \hfill $\Box$
\end{proof}

The presence of the auxiliary extreme-point sequence $\{\widehat y^k\}$ makes the local analysis more delicate than in the classical semismooth Newton method. The key point is that the error bound introduced by the correction step is controlled by Proposition~\ref{prop:dist-hatyhatP-yP}, and therefore remains asymptotically compatible with the Newton step. We can now establish the local superlinear convergence of Algorithm~\ref{alg:d-SN-poly}.

\begin{theorem}\label{thm:convergence-alg}
	Recall the definition of $\mP^*$ from Lemma \ref{lemma:error-bound},
	and let $\{y^{\nuk}\}$ and  $\{\widehat y^{\nuk}\}$  be the infinite sequences generated by Algorithm~\ref{alg:d-SN-poly}. There exist $\overline\eta >0$ and $\varepsilon >0$ such that if $y^0$ satisfies ${\rm dist}(y^0,\mP^*) \le \varepsilon$ and $\eta^\nuk \le \overline\eta$ for all $\nuk\ge0$, it holds that
	\[
	{\rm dist}(y^{\nuk+1},\mP^*)\le \frac{1}{2} {\rm dist}(y^{\nuk},\mP^*)\quad \forall\, \nuk\ge 0.
	\]
    In fact, $\{{\rm dist}(y^k,\mP^*)\}$ converges at least superlinearly to $0$ in the sense that for all sufficiently large  $k$,
    \[
        {\rm dist}(y^{k+1},\mP^*) = {\cal O}\bigl({\rm dist}^{1+\vartheta}(y^k,\mP^*)\bigr).
    \]
	Moreover, if in addition $\eta^{\nuk_0} = 0$ for some $\nuk_0 \ge 0$, then $y^{\nuk_0+1} \in \mP^*$.
\end{theorem}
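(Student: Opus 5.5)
We propose a one-step contraction estimate, proved for $\dist(y^k,\mP^*)$ small and then closed by induction. Fix $k$ with $\dist(y^k,\mP^*)\le\varepsilon$, where $\varepsilon\le R_0$ is chosen below. By Proposition~\ref{prop:dist-hatyhatP-yP} we have $\dist(\widehat y^k,\widehat\mP^*)\le C\,\dist(y^k,\mP^*)$. Since $\widehat\mP^*\subseteq\mP^*$ and $\widehat\mP^*$ is a finite set (the $y$-components of extreme points of the polyhedron $\mQ(y^*,w^*)$), we may pick $\widehat y^{*}\in\widehat\mP^*$ with $\|\widehat y^k-\widehat y^*\|\le C\,\dist(y^k,\mP^*)$. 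Because $(\widehat y^k,\widehat w^k)$ is extreme in $\mQ(y^k,w^k)$, Theorem~\ref{thm:nons-gen-jac} gives $J^k\in\mathcal J$. Hence $W^k\in\mathcal W$, and Lemma~\ref{lemma:unibounded-W} yields $\|(W^k)^{-1}\|\le C_{\mathcal W}$.

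Next we invoke semismoothness at the corrected point. By Lemma~\ref{lemma:semismooth_HS}, applied at each of the finitely many points of $\widehat\mP^*$, there is a radius $\delta>0$ with the following property: for every $\widehat y^*\in\widehat\mP^*$ and every $y\in\mathbb B(\widehat y^*,\delta)$, identity \eqref{eq:semismooth-varphi} holds with $\overline y=\widehat y^*$ for all $W\in\partial^2\varphi(y)$. Shrinking $\varepsilon$ so that $C\varepsilon<\delta$, and using $W^k\in\partial^2\varphi(\widehat y^k)$ together with $\nabla\varphi(\widehat y^*)=0$, we get $\nabla\varphi(\widehat y^k)=W^k(\widehat y^k-\widehat y^*)$. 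Then
\[
y^{k+1}-\widehat y^*=\widehat y^k-\widehat y^*+d^k=(W^k)^{-1}\bigl(W^kd^k+\nabla\varphi(\widehat y^k)\bigr),
\]
so that
\[
\dist(y^{k+1},\mP^*)\le\|y^{k+1}-\widehat y^*\|\le C_{\mathcal W}\min\bigl(\eta^k,\|\nabla\varphi(\widehat y^k)\|^\vartheta\bigr)\|\nabla\varphi(\widehat y^k)\|.
\]
Since $\nabla\varphi(\widehat y^k)=\nabla\varphi(y^k)$ (the two points lie in the same class $\mP(y^k)$) and $\nabla\varphi$ is $L$-Lipschitz with $\nabla\varphi=0$ on $\mP^*$, we have $\|\nabla\varphi(\widehat y^k)\|\le L\,\dist(y^k,\mP^*)$. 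This gives
\[
\dist(y^{k+1},\mP^*)\le C_{\mathcal W}L\min\bigl(\eta^k,(L\,\dist(y^k,\mP^*))^\vartheta\bigr)\dist(y^k,\mP^*).
\]

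Choose $\overline\eta=1/(2C_{\mathcal W}L)$. The inequality above then yields the factor $1/2$, so $\dist(y^{k+1},\mP^*)\le\varepsilon$ and the induction continues. The second branch of the minimum gives the rate $\mathcal O(\dist^{1+\vartheta})$. If $\eta^{k_0}=0$, the residual in the Newton system vanishes, so $y^{k_0+1}=\widehat y^*\in\mP^*$.

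The main obstacle is the uniformity of the semismooth identity \eqref{eq:semismooth-varphi}. The target point $\widehat y^*$ varies with $k$, and the neighborhood in Lemma~\ref{lemma:semismooth_HS} depends on the base point. The plan relies on finiteness of $\widehat\mP^*$ to make the radius $\delta$ uniform. If that finiteness argument were unavailable, we would instead use that only finitely many pieces of the piecewise-affine map occur, i.e. the finitely many pairs $(I^*,\mathcal D)$, to get a uniform radius.
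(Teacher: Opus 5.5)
Your proposal is correct and follows essentially the same route as the paper: a uniform semismooth radius around the finite set $\widehat{\mathcal P}^*$, the displacement bound of Proposition~\ref{prop:dist-hatyhatP-yP} to place $\widehat y^k$ inside that radius, the uniform bound $\|(W^k)^{-1}\|\le C_{\mathcal W}$, and induction on the contraction. The only minor difference is how you bound the residual: you use $\nabla\varphi(\widehat y^k)=\nabla\varphi(y^k)$ and Lipschitz continuity at $y^k$, so your $\overline\eta=1/(2C_{\mathcal W}L)$ does not involve the displacement constant, whereas the paper uses $\|\nabla\varphi(\widehat y^k)\|\le\|A\|^2\|\widehat y^k-\overline y^k\|\le\|A\|^2C'\dist(y^k,\mathcal P^*)$.
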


\begin{proof}
From definitions \eqref{eq:def-hatmP-star} and \eqref{eq:def-hatmP} and the finiteness of extreme points of a given polyhedron, we know that $\widehat{\mP}^*$ is finite. Then, \eqref{eq:semismooth-varphi} implies that there exists $\varepsilon_1>0$ such that, for every $\widehat y^*\in \widehat{\mP}^*$,
\[
\nabla\varphi(y)-\nabla\varphi(\widehat y^*)-W(y-\widehat y^*)=0
\quad
\forall\, W\in \partial^2\varphi(y)\quad \mbox{and} \quad y\in {\mathbb B}(\widehat y^*,\varepsilon_1).
\]
Now let $y^0$ be sufficiently close to $\mP^*$. By Proposition~\ref{prop:dist-hatyhatP-yP}, there exist constants $C'>0$ and $\varepsilon>0$ such that if ${\rm dist}(y^0,\mP^*)\le\varepsilon$, then
\[
{\rm dist}(\widehat y^0,\widehat{\mP}^*)
\le
C'{\rm dist}(y^0,\mP^*)
\le
\varepsilon_1.
\]
Choose $\overline y^0\in \Pi_{\widehat{\mP}^*}(\widehat y^0)$. Since $W^0\in{\cal W}$ and $W^0$ is nonsingular, the updating rule in Algorithm~\ref{alg:d-SN-poly} gives
\begin{align*}
{\rm dist}(y^1,\mP^*)
&\le \|\widehat y^0+d^0-\overline y^0\| \\
&= \left\|
-(W^0)^{-1}
\Big(
\nabla\varphi(\widehat y^0)-\nabla\varphi(\overline y^0)-W^0(\widehat y^0-\overline y^0)
-
\big(W^0d^0+\nabla\varphi(\widehat y^0)\big)
\Big)
\right\| \\
&\le \|(W^0)^{-1}\|\, \|W^0d^0+\nabla\varphi(\widehat y^0)\| \\
&\le C_{\cal W}\,\overline\eta\,\|\nabla\varphi(\widehat y^0)\|.
\end{align*}
Using the global Lipschitz continuity of $\nabla\varphi$ and Proposition~\ref{prop:dist-hatyhatP-yP}, we further obtain
\[
\|\nabla\varphi(\widehat y^0)\|
\le
\|A\|^2\|\widehat y^0-\overline y^0\|
\le
\|A\|^2 C' {\rm dist}(y^0,\mP^*).
\]
Hence
\[
{\rm dist}(y^1,\mP^*)
\le
C_{\cal W}\overline\eta \|A\|^2 C' {\rm dist}(y^0,\mP^*).
\]
Choosing $\overline\eta$ sufficiently small, say
\[
\overline\eta \le \frac{1}{2C_{\cal W}C'\|A\|^2},
\]
yields
\[
{\rm dist}(y^1,\mP^*)\le \frac12 {\rm dist}(y^0,\mP^*).
\]
Repeating the same argument inductively gives
\[
{\rm dist}(y^{k+1},\mP^*)\le \frac12 {\rm dist}(y^k,\mP^*)
\quad \forall\,k\ge0.
\]
This proves the linear contraction.

For the superlinear estimate, let $\overline y^k\in\Pi_{\widehat{\mP}^*}(\widehat y^k)$. By \eqref{eq:semismooth-varphi},
\[
\nabla\varphi(\widehat y^k)-\nabla\varphi(\overline y^k)-W^k(\widehat y^k-\overline y^k)=0
\]
for all sufficiently large $k$. Hence,
\begin{align*}
{\rm dist}(y^{k+1},\mP^*)
&\le \|\widehat y^k+d^k-\overline y^k\| \\
&=
\left\|
-(W^k)^{-1}
\Big(
\nabla\varphi(\widehat y^k)-\nabla\varphi(\overline y^k)-W^k(\widehat y^k-\overline y^k)
-
\big(W^kd^k+\nabla\varphi(\widehat y^k)\big)
\Big)
\right\| \\
&\le \|(W^k)^{-1}\|\, \|W^kd^k+\nabla\varphi(\widehat y^k)\| \\
&\le C_{\cal W}\,\|\nabla\varphi(\widehat y^k)\|^{1+\vartheta}.
\end{align*}
By the global Lipschitz continuity of $\nabla\varphi$ and Proposition~\ref{prop:dist-hatyhatP-yP},
\[
\|\nabla\varphi(\widehat y^k)\|
=
{\cal O}\bigl({\rm dist}(\widehat y^k,\widehat{\mP}^*)\bigr)
=
{\cal O}\bigl({\rm dist}(y^k,\mP^*)\bigr).
\]
Therefore,
\[
{\rm dist}(y^{k+1},\mP^*)
=
{\cal O}\bigl({\rm dist}^{\,1+\vartheta}(y^k,\mP^*)\bigr).
\]

Finally, if $\eta^{k_0}=0$ for some $k_0\ge0$, then the Newton residual vanishes:
\[
W^{k_0}d^{k_0}+\nabla\varphi(\widehat y^{k_0})=0.
\]
Using again the exact semismooth expansion at the corresponding $\overline y^{k_0}\in\Pi_{\widehat{\mP}^*}(\widehat y^{k_0})$, we obtain
\[
\widehat y^{k_0}+d^{k_0}=\overline y^{k_0}\in \widehat{\mP}^*\subseteq \mP^*,
\]
and hence $y^{k_0+1}\in\mP^*$. \hfill $\Box$
\end{proof}

\subsection{Globalization of Algorithm~\ref{alg:d-SN-poly}}\label{section:Global}

The local analysis above shows that, once the semismooth Newton linearization is performed at an extreme point, the resulting step exhibits the expected fast local convergence behavior. We now turn to globalization of Algorithm~\ref{alg:d-SN-poly}. The remaining task is to handle the potential increase in the objective value induced by the correction step and the lack of global descent of the Newton step, without compromising the eventual local superlinear rate.

An essential ingredient for addressing the former issue is the following monotone extreme-point identification subroutine, Algorithm~\ref{alg:find-ex-Q-nonicreasing}.  For any given pair $(\bar y, \bar w)$, it computes an extreme point $(\widehat y,\widehat w)\in{\cal E}(\mQ(\bar y, \bar w))$ such that
\[
\varphi(\widehat y)\le \varphi(\bar y).
\]
This subroutine can be viewed as a monotone extension of Algorithm~\ref{alg:find-ex-Q-1}. Besides identifying an extreme point of the lifted projection-equivalent set, it also ensures that the correction step does not increase the dual objective value. 

\begin{algorithm}[H]
\caption{Identifying an extreme point $(\widehat y, \widehat w) \in {\cal E}(\mQ(\bar y,\bar \ww))$ with $\varphi(\widehat y) \le \varphi(\bar y)$}
\label{alg:find-ex-Q-nonicreasing}
\begin{algorithmic}[1]
\State Initialize $(y,\ww) = (\bar y, \bar \ww)\in {\cal Q}(\bar y, \bar \ww)$ and set $J=\supp(\ww)$.
\While{$\begin{bmatrix} A^\top & G_J^\top \end{bmatrix}$ does not have full column rank}
\State Find $(\widetilde\dyy,\widetilde\dww_J)\neq 0$ such that
\[
A^\top\widetilde\dyy- G_J^\top\widetilde\dww_J=0.
\]
\If{($\widetilde\dyy^\top \nabla \varphi(y) = 0$ {\bf and } $\widetilde\dww_J\le 0$) {\bf or} $\widetilde\dyy^\top \nabla \varphi(y) > 0$}
    \State \[
(\Delta y, \Delta \ww_J) = -(\widetilde\dyy, \widetilde\dww_J);
\]
\Else
    \State \[
(\Delta y, \Delta \ww_J) = (\widetilde\dyy, \widetilde\dww_J).
\]
\EndIf
\State Find $\tau > 0$ such that
\[
\varphi(y+\tau\dyy)\le \varphi(y),\qquad
\ww_J+\tau\dww_J\le 0,\qquad
\supp(\ww_J+\tau\dww_J)\subsetneq J.
\]
\State Update $y= y+\tau \dyy$ and $\ww= (\ww_J+\tau\dww_J;\ww_{J^C})$.
\State Update $J=\supp(\ww)$.
\EndWhile
\State Output $(\widehat y, \widehat w) = (y,\ww)$.
\end{algorithmic}
\end{algorithm}

The following proposition establishes that Algorithm~\ref{alg:find-ex-Q-nonicreasing} is well defined and gives a polynomial bound on its computational cost.
It reveals the fact that obtaining an extreme point of the projection-equivalent set with nonincreasing dual objective value can be achieved by solving a linear programming problem. Instead of calling a general-purpose linear programming solver, Algorithm~\ref{alg:find-ex-Q-nonicreasing} exploits 
a monotone feasible pivoting procedure used in the proof of \cite[Theorem 2.8]{bertsimas1997introduction} to identify a suitable extreme point by solving a sequence of homogeneous linear systems. 

\begin{proposition}\label{prop:alg-find-ex-Q-monotone}
For any $(\bar y,\bar \ww)$ with $\bar \ww\in\mM(A^\top \bar y+c)$, the minimization problem
\[
\min\left\{\varphi(y') \;\middle|\; (y',\ww')\in \mQ(\bar y,\bar \ww)\right\}
\]
is a linear programming problem with a nonempty optimal solution set. Moreover, Algorithm~\ref{alg:find-ex-Q-nonicreasing} is well defined, terminates in at most $|\supp(\bar \ww)|$ iterations at a point $(\widehat y,\widehat \ww)\in{\cal E}(\mQ(\bar y,\bar \ww))$ satisfying
\[
\varphi(\widehat y)\le \varphi(\bar y),
\]
and the total computational cost is bounded by
$
{\cal O}\!\left(m_I n(m_E + m_I)\min\{m_E + m_I,\, n\}\right).
$
\end{proposition}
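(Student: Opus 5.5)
The plan has three parts: show the restricted problem is a linear program, show each pass of the loop is well defined and strictly shrinks the support, and then count the cost.

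\textbf{The LP structure.} Every $y'$ with $(y',\ww')\in\mQ(\bar y,\bar\ww)$ lies in $\mP(\bar y)$, so $\Pi_K(A^\top y'+c)=\bar x:=\Pi_K(A^\top\bar y+c)$. Substituting into \eqref{eq:varphiy} and expanding the squares gives
\[
\varphi(y')=-\langle b,y'\rangle+\langle A^\top y'+c,\bar x\rangle-\tfrac12\|\bar x\|^2
=\langle A\bar x-b,\,y'\rangle+\mathrm{const}.
\]
So on $\mP(\bar y)$ the function $\varphi$ is affine with constant gradient $\nabla\varphi(\bar y)=A\bar x-b$. Minimizing it over the polyhedron $\mQ(\bar y,\bar\ww)$ is therefore an LP.

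For existence of a minimizer I would use that $\varphi$ is convex and bounded below by $\varphi^*$ (Lemma~\ref{lemma:error-bound}), and that a bounded-below LP over a nonempty polyhedron attains its minimum. Since $\mQ(\bar y,\bar\ww)$ has no lines (Proposition~\ref{prop:mQ-property}(ii)), a minimizer can even be taken at a vertex.

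\textbf{Well-definedness of one pass.} At the current $(y,\ww)\in\mQ(\bar y,\bar\ww)$ the gradient is $\nabla\varphi(y)=\nabla\varphi(\bar y)$, because $y\in\mP(\bar y)$. While the full-column-rank test fails, Theorem~\ref{thm:nons-gen-jac} tells us $(y,\ww)$ is not extreme, so the kernel direction $(\widetilde\dyy,\widetilde\dww_J)\neq0$ of line~3 exists. As in Lemma~\ref{lemma:dist-hat-y-opt}, full row rank of $A$ forces $\widetilde\dww_J\neq0$.

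The sign rule then yields a direction with two properties.
\begin{itemize}
\item[(a)] $\dyy^\top\nabla\varphi(y)\le0$.
\item[(b)] $\dww_J$ has at least one positive entry. If $\dyy^\top\nabla\varphi=0$ and $\widetilde\dww_J\le0$, flipping the sign makes $-\widetilde\dww_J$ a nonzero nonnegative vector. In the other cases, if $\dww_J\le0$ happened anyway, I would move along $\dww_J$ and check whether the step can be unbounded.
\end{itemize}

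Case (b) is the crux. If $\dyy^\top\nabla\varphi<0$ and $\dww_J\le0$, the ray $(y,\ww)+t(\dyy,\dww)$ stays in $\mQ$ for all $t\ge0$ (with $\dww_{J^C}=0$). Along it $\varphi$ is affine and decreases without bound, contradicting $\varphi\ge\varphi^*$. Hence $\dww_J$ always has a positive component.

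Set $\tau=\min\{|\ww_i|/\dww_i : i\in J,\ \dww_i>0\}>0$. This keeps $\ww_J+\tau\dww_J\le0$, zeroes out at least one index, and gives $\varphi(y+\tau\dyy)=\varphi(y)+\tau\nabla\varphi^\top\dyy\le\varphi(y)$. The new pair satisfies the defining relations of $\mQ$, so it stays in $\mQ(\bar y,\bar\ww)$, and the support strictly decreases. When the support is empty, $[A^\top]$ has full column rank, so the loop runs at most $|\supp(\bar\ww)|$ times. On exit, Theorem~\ref{thm:nons-gen-jac} gives extremality of the output, since $\mQ(y,\ww)=\mQ(\bar y,\bar\ww)$ by Proposition~\ref{prop:mQ-property}(iii).

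\textbf{Cost.} This is the same count as in Proposition~\ref{prop:alg-find-ex-Q-1}, plus an $\mathcal O(m_E)$ inner product per pass, which does not change the bound.

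The main obstacle is the exclusion of unbounded rays in (b). The key is the affine structure of $\varphi$ on $\mP(\bar y)$ combined with its lower bound $\varphi^*$.
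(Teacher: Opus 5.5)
Your proposal is correct and follows essentially the same route as the paper. The restriction of $\varphi$ to $\mQ(\bar y,\bar\ww)$ is affine, which gives the LP and its solvability; the sign rule together with the unbounded-ray contradiction against $\varphi\ge\varphi^*$ forces a positive entry in $\Delta\ww_J$, so the ratio-test step strictly shrinks the support without increasing $\varphi$; and extremality at exit follows from Theorem~\ref{thm:nons-gen-jac}, where your explicit use of Proposition~\ref{prop:mQ-property}(iii) to identify $\mQ(y,\ww)$ with $\mQ(\bar y,\bar\ww)$ is a small but welcome point of care.
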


\begin{proof}
	Let
	\[
	x:=\Pi_K(A^\top \bar y+c).
	\]
	By Proposition~\ref{prop:mQ-property}, for every
	$(y',\ww')\in\mQ(\bar y,\bar\ww)$, one has
	\[
	\Pi_K(A^\top y'+c)
	=
	\Pi_K(A^\top \bar y+c)
	=
	x.
	\]
	Hence, it follows from \eqref{eq:varphiy} that
	\[
	\begin{aligned}
		\varphi(y')
		&=
		-\langle b,y'\rangle
		+\frac12\|A^\top y'+c\|^2
		-\frac12\|A^\top y'+c-x\|^2  \\
		&=
		\langle y',Ax-b\rangle
		+\langle c,x\rangle
		-\frac12\|x\|^2.
	\end{aligned}
	\]
	Since
	\[
	Ax-b
	=
	A\Pi_K(A^\top\bar y+c)-b
	=
	\nabla\varphi(\bar y),
	\]
	the restriction of $\varphi$ to $\mQ(\bar y,\bar\ww)$ is affine:
	\[
	\varphi(y')
	=
	\langle\nabla\varphi(\bar y),y'\rangle
	+\langle c,x\rangle
	-\frac12\|x\|^2
	\qquad
	\forall\,(y',\ww')\in\mQ(\bar y,\bar\ww).
	\]
	Consequently,
	\[
	\min\left\{
	\varphi(y')
	\;\middle|\;
	(y',\ww')\in\mQ(\bar y,\bar\ww)
	\right\}
	\]
	is a linear programming problem. Its optimal solution set is nonempty
	because $\mQ(\bar y,\bar\ww)$ is a nonempty polyhedron and the
	objective is bounded below there by the global lower boundedness of
	$\varphi$.
	
	We next analyze Algorithm~\ref{alg:find-ex-Q-nonicreasing}.
	Let $(y^s,\ww^s)$ denote the pair generated after $s$ iterations of
	the while-loop, with
	\[
	(y^0,\ww^0)=(\bar y,\bar\ww),
	\qquad
	J_s:=\supp(\ww^s).
	\]
	We prove by induction that
	\[
	(y^s,\ww^s)\in\mQ(\bar y,\bar\ww)
	\quad\text{and}\quad
	\varphi(y^s)\le \varphi(y^{s-1})
	\]
	for every iteration $s\ge 1$.
	The assertion is immediate at initialization, since
	\(
	(y^0,\ww^0)=(\bar y,\bar\ww)
	\in\mQ(\bar y,\bar\ww).
	\)
	Now suppose, as the induction hypothesis, that
	\[
	(y^s,\ww^s)\in\mQ(\bar y,\bar\ww).
	\]
	By Proposition~\ref{prop:mQ-property}, we  have
	\(
	\mQ(y^s,\ww^s)=\mQ(\bar y,\bar\ww)\) and \(	\Pi_K(A^\top y^s+c)
	=
	\Pi_K(A^\top\bar y+c).
	\)
	Thus,
	\begin{equation}\label{eq:proof-varphi-ys}
	\nabla\varphi(y^s)=\nabla\varphi(\bar y).
	\end{equation}
	Now, suppose that
	\(
	\begin{bmatrix}
		A^\top & G_{J_s}^\top
	\end{bmatrix}
	\)
	does not have full column rank. Then there exists a nonzero pair
	$(\widetilde\Delta y^s,\widetilde\Delta\ww^s_{J_s})$ such that
	\[
	A^\top\widetilde\Delta y^s
	-
	G_{J_s}^\top\widetilde\Delta\ww^s_{J_s}
	=0.
	\]
	Since $A$ has full row rank, necessarily
	$\widetilde\Delta\ww^s_{J_s}\neq0$.
	Lines~4--8 of Algorithm~\ref{alg:find-ex-Q-nonicreasing}
	select $(\widetilde\Delta y^s,\widetilde\Delta\ww^s_{J_s})$ or its negative and obtain
	$(\Delta y^s,\Delta\ww^s_{J_s})$, such that
	\begin{equation}\label{eq:proof-decent-varphiys}
	A^\top\Delta y^s-G_{J_s}^\top\Delta\ww^s_{J_s}=0,
	\qquad
	\langle\nabla\varphi(y^s),\Delta y^s\rangle\le0.
	\end{equation}
	We next show that $\Delta\ww^s_{J_s}$ has at least one positive
	component. If
	\(
	\langle\nabla\varphi(y^s),\Delta y^s\rangle=0,
	\)
	this follows directly from the sign-selection rule in
	Lines~4--8. If instead
	\(
	\langle\nabla\varphi(y^s),\Delta y^s\rangle<0
	\)
	and $\Delta\ww^s_{J_s}\le0$, extend
	$\Delta\ww^s_{J_s}$ to $\Delta\ww^s\in\Rbb^{m_I}$ by setting
	$\Delta\ww^s_{J_s^C}=0$. Then
	\[
	(y^s+t\Delta y^s,\ww^s+t\Delta\ww^s)
	\in\mQ(y^s,\ww^s)
	=\mQ(\bar y,\bar\ww)
	\qquad \forall\,t\ge0.
	\]
	Since $\varphi$ is affine on $\mQ(\bar y,\bar\ww)$ and
	$\nabla\varphi(y^s)=\nabla\varphi(\bar y)$ by \eqref{eq:proof-varphi-ys}, it follows that
	\[
	\varphi(y^s+t\Delta y^s)
	= \varphi(y^s) +
	t\langle\nabla\varphi(y^s),\Delta y^s\rangle
	\to -\infty,
	\]
	contradicting the boundedness from below of $\varphi$.
	Thus, $\Delta\ww^s_{J_s}$ has a positive component.
	
	Therefore, the stepsize
	\[
	\tau_s
	:=
	\min\left\{
	-\frac{\ww^s_i}{(\Delta\ww^s_{J_s})_i}
	\;\middle|\;
	i\in J_s,\;
	(\Delta\ww^s_{J_s})_i>0
	\right\}
	\]
	is well defined and positive. Define
	\[
	y^{s+1}:=y^s+\tau_s\Delta y^s,
	\qquad
	\ww^{s+1}:=\ww^s+\tau_s\Delta\ww^s,
	\]
	where $\Delta\ww^s_{J_s^C}=0$. By construction,
	\[
	\ww^{s+1}\le0,
	\qquad
	\supp(\ww^{s+1})\subsetneq\supp(\ww^s),\qquad \mbox{and} \qquad A^\top(y^{s+1}-y^s)
	=
	G^\top(\ww^{s+1}-\ww^s). 
	\]
	Hence,
	\[
	(y^{s+1},\ww^{s+1})
	\in\mQ(y^s,\ww^s)
	=
	\mQ(\bar y,\bar\ww).
	\]
	Finally, using the affine representation of $\varphi$ on
	$\mQ(\bar y,\bar\ww)$, together with \eqref{eq:proof-varphi-ys} and \eqref{eq:proof-decent-varphiys}, we obtain
	\[
	\begin{aligned}
		\varphi(y^{s+1})-\varphi(y^s)
		=
		\tau_s
		\langle\nabla\varphi(\bar y),\Delta y^s\rangle =
		\tau_s
		\langle\nabla\varphi(y^s),\Delta y^s\rangle
		\le0.
	\end{aligned}
	\]
	Thus, by induction, all iterates remain in
	$\mQ(\bar y,\bar\ww)$, the objective values are nonincreasing, and
	the support of the multiplier decreases strictly at every iteration.
	
	The algorithm therefore terminates after at most
	$|\supp(\bar\ww)|\le m_I$ iterations. At termination, the final pair
	$(\widehat y,\widehat\ww) \in {\cal Q}(\bar y, \bar w)$ satisfies
	\[
	\begin{bmatrix}
		A^\top&G_{\widehat J}^\top
	\end{bmatrix}
	\quad\text{has full column rank},
	\qquad
	\widehat J:=\supp(\widehat\ww), \qquad \mbox{and} \qquad \varphi(\widehat y)\le\varphi(\bar y).
	\]
	By Theorem~\ref{thm:nons-gen-jac}, we also have
	$(\widehat y,\widehat\ww) \in \mE(\mQ(\bar y,\bar w))$. 
	Finally, the main computational cost at each iteration is finding a
	nonzero solution of
	\[
	A^\top\Delta y-G_J^\top\Delta\ww_J=0,
	\]
	which requires at most
	\(
	{\cal O}\!\left(
	n(m_E+m_I)\min\{m_E+m_I,n\}
	\right)
	\)
	operations.  Hence, the total
	computational cost is bounded by
	\[
	{\cal O}\!\left(
	m_I n(m_E+m_I)\min\{m_E+m_I,n\}
	\right).
	\]
	This completes the proof. \hfill $\Box$
\end{proof}

We are now ready to present a globalized semismooth Newton framework for solving problem~\eqref{prob:dual-K-lin-proj}. At each iteration, Algorithm~\ref{alg:find-ex-Q-nonicreasing} is first invoked to compute an extreme point
\[
(\widehat y^k,\widehat w^k)\in{\cal E}(\mQ(y^k,w^k))
\]
such that the dual function value does not increase. An inexact semismooth Newton step is then performed at the corrected point $(\widehat y^k,\widehat w^k)$, followed by a Wolfe line search.

\begin{algorithm}[H]
\caption{A globalized inexact semismooth Newton method with extreme-point correction for solving \eqref{prob:dual-K-lin-proj}}
\label{alg:global}
\begin{algorithmic}[1]
\State{Initialize $y^0$, $\ww^0 \in \mM(A^\top y^0+c)$, $\gamma_1\in (0,\frac{1}{2})$, $\gamma_2\in (\gamma_1,1)$, $\vartheta \in (0,1]$, and $\zeta \in [0,1)$. 
} 
\For{$k = 0,1,\ldots$}
\State{Use Algorithm \ref{alg:find-ex-Q-nonicreasing}  to find an extreme point $(\widehat y^{\nuk},  \widehat\ww^{\nuk}) \in {\cal E}(\mQ(y^{\nuk}, \ww^{\nuk}))$ such that $\varphi(\widehat y^{\nuk})\le \varphi( y^{\nuk})$.}
\State{Let $J^{\nuk} = {\rm supp}(\widehat\ww^\nuk)$ and
\[
W^\nuk = A(I_n - G_{J^\nuk}^\top (G_{J^\nuk}G_{J^\nuk}^\top )^{-1}G_{J^\nuk})A^\top .
\]
If $\zeta > 0$, apply the practical conjugate gradient algorithm \cite{golub2013matrix,zhao2010newton} to find an approximate solution $d^\nuk$ to
	\[W^\nuk d + \nabla \varphi( \widehat y^\nuk)=0 \quad \mbox{such that} \quad
\| W^\nuk d^\nuk + \nabla \varphi(\widehat y^\nuk)\| \le \min\big(\zeta, \|\nabla \varphi(\widehat y^\nuk)\|^\vartheta\big)\| \nabla \varphi(\widehat y^\nuk)\|;
\]
otherwise if $\zeta = 0$, compute $d^k = -(W^k)^{-1}\nabla \varphi(\widehat y^k).$
}
\State{Find a stepsize $\alpha^k > 0$ (with $\alpha^k = 1$ tried first) that satisfies the Wolfe conditions:
\begin{align}
    \varphi(\widehat y^{k} + \alpha^k d^k) &\le \varphi(\widehat y^{k}) + \gamma_1 \alpha^k \langle \nabla \varphi(\widehat y^k), d^k \rangle, \label{eq:Wolfe-1} \\
    \langle \nabla \varphi(\widehat y^{k} + \alpha^k d^k), d^k \rangle &\ge \gamma_2 \langle \nabla \varphi(\widehat y^k), d^k \rangle. \label{eq:Wolfe-2}
\end{align}}
\State{Let $y^{\nuk+1}=\widehat y^\nuk+\alpha^\nuk d^\nuk$ and find $\ww^{\nuk + 1} \in \mM(A^\top  y^{\nuk+1}+c)$.}
\EndFor
\end{algorithmic}
\end{algorithm}

To show that Algorithm~\ref{alg:global} is well defined, we first establish the following lemma, which shows that, at each iteration \(k\), the matrix \(W^k\) is symmetric positive definite and the direction \(d^k\) is a descent direction.
To facilitate the analysis, we first introduce two spectral constants associated with the set
\({\cal W}\) of ``Jacobian-like'' matrices defined in \eqref{eq:Jacobian-like-W}:
\begin{equation}\label{eq:constant}
    c_1 := \max_{W\in{\cal W}} \lambda_{\max}(W) \quad \mbox{and} \quad 
    c_2 := \min_{W\in{\cal W}} \lambda_{\min}(W).
\end{equation}
Note that Lemma~\ref{lemma:unibounded-W} guarantees that both constants are finite and strictly positive; in particular,
$c_1 \ge c_2 > 0.$

\begin{lemma}\label{lemma:bound-alg-dk}
    In Algorithm~\ref{alg:global}, the matrix $W^k$ is symmetric positive definite, and whenever $\nabla \varphi(\widehat y^k) \neq 0$, the direction $d^k$ satisfies
    \begin{equation}\label{eq:decentdk}
        -\frac{1}{c_2} \le \frac{\langle \nabla \varphi(\widehat y^k), d^k \rangle}{\|\nabla \varphi(\widehat y^k)\|^2} \le -\frac{1}{c_1},
    \end{equation}
    and
    \begin{equation}\label{eq:dk-nablavarphi-bound}
        \frac{1 - \zeta}{c_1} \le \frac{\|d^k\|}{\|\nabla \varphi(\widehat y^k)\|} \le \frac{1 + \zeta}{c_2}.
    \end{equation}
    Moreover, it holds that
    \begin{equation}\label{eq:mnablaykdk}
    \frac{c_2^2}{(1+\zeta)^2c_1} \|d^k\|^2\le -\langle \nabla \varphi(\widehat y^k), d^k \rangle\le \frac{c_1^2}{(1-\zeta)^2c_2} \|d^k\|^2.
\end{equation}
\end{lemma}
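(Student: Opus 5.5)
First, $W^k\in\mathcal W$. Algorithm~\ref{alg:find-ex-Q-nonicreasing} returns $(\widehat y^k,\widehat w^k)\in\mathcal E(\mQ(y^k,w^k))$ (Proposition~\ref{prop:alg-find-ex-Q-monotone}). By Proposition~\ref{prop:mQ-property}(iii), $\mQ(y^k,w^k)=\mQ(\widehat y^k,\widehat w^k)$ and $\widehat w^k\in\mM(A^\top\widehat y^k+c)$. Theorem~\ref{thm:nons-gen-jac} then gives that $J^k=\supp(\widehat w^k)$ belongs to $\mathcal J$. Hence $W^k\in\mathcal W$, and it is symmetric positive definite by Lemma~\ref{lemma:unibounded-W}. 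So $c_2 I\preceq W^k\preceq c_1 I$.

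Write $g:=\nabla\varphi(\widehat y^k)\neq0$ and the residual $r:=W^kd^k+g$. In the case $\zeta=0$ we have $r=0$. In the case $\zeta>0$ the inexactness rule gives $\|r\|\le\zeta\|g\|$. For \eqref{eq:dk-nablavarphi-bound} use $d^k=(W^k)^{-1}(r-g)$, so
\[
\|d^k\|\le c_2^{-1}\|r-g\|\le(1+\zeta)\|g\|/c_2,
\]
and $\|W^kd^k\|=\|r-g\|\ge(1-\zeta)\|g\|$ with $\|W^kd^k\|\le c_1\|d^k\|$.

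For \eqref{eq:decentdk} a plain triangle inequality gives only $\langle g,d^k\rangle=-g^\top(W^k)^{-1}g+g^\top(W^k)^{-1}r$, which yields bounds like $-(1\mp\zeta)/c$ with extra $\zeta$ factors, not the stated ones. So I will rely on a property of the practical CG method. Started from $d=0$, the CG iterate $d_j$ is the minimizer of $q(d)=\tfrac12 d^\top W^kd+g^\top d$ over the Krylov space $\mathcal K_j=\mathrm{span}\{g,W^kg,\dots\}$. Its residual is then orthogonal to $\mathcal K_j\ni d_j$, that is, $\langle W^kd^k+g,d^k\rangle=0$. This gives
\[
\langle g,d^k\rangle=-\langle d^k,W^kd^k\rangle,
\]
and the same identity holds trivially in the exact case.

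\textbf{Main obstacle: the bounds \eqref{eq:decentdk}, and I am not certain they follow exactly as stated.} Galerkin orthogonality gives $-\langle g,d^k\rangle=\langle d^k,W^kd^k\rangle\in[c_2\|d^k\|^2,c_1\|d^k\|^2]$. I would first try to bound $\langle d^k,W^kd^k\rangle$ directly by a multiple of $\|g\|^2$. If that fails, I would use the CG property that $d_j$ is the $W^k$-orthogonal projection of $-(W^k)^{-1}g$ onto $\mathcal K_j$, with $g\in\mathcal K_j$. This gives the upper bound
\[
\langle d^k,W^kd^k\rangle\le g^\top(W^k)^{-1}g\le\|g\|^2/c_2,
\]
which is the left inequality of \eqref{eq:decentdk}. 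For the lower bound, minimality of $q$ over $\mathcal K_j$, compared against the point $tg$, gives
\[
\tfrac12\langle d^k,W^kd^k\rangle=-q(d^k)\ge\max_t\bigl(-q(tg)\bigr)=\frac{\|g\|^4}{2g^\top W^kg}\ge\frac{\|g\|^2}{2c_1}.
\]
This yields the right inequality $\langle g,d^k\rangle\le-\|g\|^2/c_1$.

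Finally, \eqref{eq:mnablaykdk} follows by combining \eqref{eq:decentdk} with \eqref{eq:dk-nablavarphi-bound}. From \eqref{eq:dk-nablavarphi-bound}, $\|g\|^2\ge c_2^2\|d^k\|^2/(1+\zeta)^2$, so
\[
-\langle g,d^k\rangle\ge\frac{\|g\|^2}{c_1}\ge\frac{c_2^2}{(1+\zeta)^2c_1}\|d^k\|^2.
\]
Similarly, $\|g\|^2\le c_1^2\|d^k\|^2/(1-\zeta)^2$, so
\[
-\langle g,d^k\rangle\le\frac{\|g\|^2}{c_2}\le\frac{c_1^2}{(1-\zeta)^2c_2}\|d^k\|^2.
\]
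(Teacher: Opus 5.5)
Your proof is correct and has the same structure as the paper's. You show $W^k\in\mathcal W$ from the extreme-point property and Theorem~\ref{thm:nons-gen-jac}, use the residual/triangle-inequality argument for \eqref{eq:dk-nablavarphi-bound}, and combine the two for \eqref{eq:mnablaykdk}. The only difference is that the paper cites \cite[Proposition~3.3]{zhao2010newton} for \eqref{eq:decentdk}, whereas you derive it from the Krylov-subspace minimization property of CG started at $0$: Galerkin orthogonality, the energy-norm comparison with $-(W^k)^{-1}\nabla\varphi(\widehat y^k)$, and the comparison with multiples of $\nabla\varphi(\widehat y^k)$. This derivation is valid; it implicitly uses that at least one CG step is taken, which holds because $\zeta<1$ and $\nabla\varphi(\widehat y^k)\neq0$.
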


\begin{proof}
The positive definiteness of $W^k$ follows from $(\widehat y^k, \widehat w^k) \in {\cal E}(\mQ(y^k, w^k))$ and Theorem~\ref{thm:nons-gen-jac}. Suppose that $\nabla \varphi(\widehat y^k) \neq 0$. If $\zeta >0$, then by \cite[Proposition~3.3]{zhao2010newton}, the practical conjugate gradient algorithm generates a direction $d^k$ satisfying
\[
-\frac{1}{\lambda_{\min}(W^k)} \le \frac{\langle \nabla \varphi(\widehat y^k), d^k \rangle}{\|\nabla \varphi(\widehat y^k)\|^2} \le -\frac{1}{\lambda_{\max}(W^k)}.
\]
The same bound holds trivially in the exact case $\zeta = 0$. Hence \eqref{eq:decentdk} follows from the definition of $c_1$ and $c_2$.

Since
\[
\|W^k d^k + \nabla \varphi(\widehat y^k)\| \le \zeta \|\nabla \varphi(\widehat y^k)\|,
\]
we have
\[
(1 - \zeta)\|\nabla \varphi(\widehat y^k)\| \le \|W^k d^k\| \le (1 + \zeta) \|\nabla \varphi(\widehat y^k)\|.
\]
Combining this with \eqref{eq:constant} yields
\[
c_1 \|d^k\| \ge (1 -\zeta)\|\nabla \varphi(\widehat y^k)\|\quad \mbox{and}
\quad
(1 + \zeta) \|\nabla \varphi(\widehat y^k)\| \ge c_2 \|d^k\|.
\]
Thus \eqref{eq:dk-nablavarphi-bound} follows. Finally, \eqref{eq:mnablaykdk} is an immediate consequence of \eqref{eq:decentdk} and \eqref{eq:dk-nablavarphi-bound}.
\hfill $\Box$
\end{proof}

The bound~\eqref{eq:decentdk}, together with the boundedness of $\varphi$ from below, guarantees the existence of a stepsize satisfying the Wolfe conditions in Algorithm~\ref{alg:global}; see \cite[Lemma~3.1]{NocedalWright2006}. Hence, Algorithm~\ref{alg:global} is well defined. 
To avoid trivial cases, we analyze the convergence properties of Algorithm~\ref{alg:global} under the assumption that $\nabla\varphi(\widehat y^k) \neq 0$ for all $k\ge0$. As shown in the following proof, this assumption is equivalent to $\nabla\varphi(y^k) \neq 0$ for all $k\ge0$.

\begin{proposition}\label{prop:global-convergence}
    Let $ \{y^{\nuk}\} $ and $ \{\widehat{y}^{\nuk}\} $ be the infinite sequences generated by Algorithm \ref{alg:global}. Then, it holds that
    \begin{equation}\label{eq:dist-hatyk-P}
    	\lim_{\nuk\to \infty} \nabla \varphi(\widehat y^\nuk) = 0, \quad \lim_{\nuk\to \infty} {\rm dist}(\widehat y^\nuk, {\cal P}^*) = 0,
    \end{equation}
    and
    \begin{equation}\label{eq:dist-yk-P}
    	\lim_{\nuk\to \infty} \nabla \varphi( y^\nuk) = 0, \quad \lim_{\nuk\to \infty} {\rm dist}( y^\nuk, {\cal P}^*) = 0.
    \end{equation}
Moreover,
\begin{equation}\label{eq:distykhatP}
\lim_{\nuk\to \infty} {\rm dist}(\widehat y^\nuk, \widehat{\cal P}^*) = 0.
\end{equation}
Hence, $\{\widehat y^k\}$ is bounded and any accumulation point $\widehat y^*$ of $\{\widehat y^\nuk\}$ satisfies $\widehat y^*\in \widehat{\cal P}^*$, and
\begin{equation}\label{eq:converge-func-value}
    \lim_{k\to \infty} \varphi(y^k) = \lim_{k\to \infty} \varphi(\widehat y^k) = \varphi^*,
\end{equation}
where $\varphi^*$ is the optimal value of problem \eqref{prob:dual-K-lin-proj} defined in Lemma \ref{lemma:error-bound}.
\end{proposition}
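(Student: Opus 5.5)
The plan is the standard Wolfe/Zoutendijk argument, transferred between $y^k$ and $\widehat y^k$ using the facts that the correction step does not increase $\varphi$ and that $\nabla\varphi$ is constant on projection-equivalence classes.

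\textbf{Step 1: monotonicity and summability.} Since $(\widehat y^k,\widehat w^k)\in\mQ(y^k,w^k)$, Proposition~\ref{prop:mQ-property} gives $\widehat y^k\in\mP(y^k)$. Hence $\nabla\varphi(\widehat y^k)=\nabla\varphi(y^k)$, which shows the two nonvanishing assumptions are equivalent. By Proposition~\ref{prop:alg-find-ex-Q-monotone}, $\varphi(\widehat y^k)\le\varphi(y^k)$. Combining this with \eqref{eq:Wolfe-1} and \eqref{eq:decentdk},
\[
\varphi(y^{k+1})\le\varphi(\widehat y^k)+\gamma_1\alpha^k\langle\nabla\varphi(\widehat y^k),d^k\rangle\le\varphi(y^k)-\frac{\gamma_1\alpha^k}{c_1}\|\nabla\varphi(\widehat y^k)\|^2 .
\]
So $\{\varphi(y^k)\}$ and $\{\varphi(\widehat y^k)\}$ are nonincreasing, interlaced, and bounded below by $\varphi^*$. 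Telescoping yields $\sum_k\alpha^k\|\nabla\varphi(\widehat y^k)\|^2<\infty$.

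\textbf{Step 2: uniform lower bound on $\alpha^k$.} Let $L:=\|A\|^2$ be the Lipschitz constant of $\nabla\varphi$. From \eqref{eq:Wolfe-2},
\[
(\gamma_2-1)\langle\nabla\varphi(\widehat y^k),d^k\rangle\le\langle\nabla\varphi(\widehat y^k+\alpha^kd^k)-\nabla\varphi(\widehat y^k),d^k\rangle\le L\alpha^k\|d^k\|^2 .
\]
Together with \eqref{eq:mnablaykdk} this gives $\alpha^k\ge\bar\alpha:=(1-\gamma_2)c_2^2/\bigl(L(1+\zeta)^2c_1\bigr)>0$. Therefore $\nabla\varphi(\widehat y^k)=\nabla\varphi(y^k)\to0$.

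\textbf{Step 3: distances to $\mP^*$.} The error bound \eqref{eq:error-bound} holds once $\|\nabla\varphi\|\le r$, so $\dist(y^k,\mP^*)\le C\|\nabla\varphi(y^k)\|\to0$, and likewise for $\widehat y^k$. This proves \eqref{eq:dist-hatyk-P} and \eqref{eq:dist-yk-P}. Then \eqref{eq:quad-bound} gives $\varphi(y^k)\to\varphi^*$, and the sandwich $\varphi^*\le\varphi(\widehat y^k)\le\varphi(y^k)$ gives \eqref{eq:converge-func-value}.

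\textbf{Step 4: convergence to $\widehat{\mP}^*$ (main obstacle).} Here the plan is to use Proposition~\ref{prop:dist-hatyhatP-yP}, noting that $\widehat y^k$ is itself an extreme representative of the class of $y^k$. For large $k$ we have $\dist(y^k,\mP^*)\le R_0$, so Proposition~\ref{prop:dist-hatyhatP-yP} applied to $(y^k,w^k)$ with extreme point $(\widehat y^k,\widehat w^k)$ gives
\[
\dist(\widehat y^k,\widehat{\mP}^*)\le C\,\dist(y^k,\mP^*)\to0 .
\]
This is \eqref{eq:distykhatP}. The care point is that the proposition needs only $\dist(y^k,\mP^*)$ to be small, with no proximity of the iterate to a specific solution, and this is supplied by Step 3.

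\textbf{Step 5: boundedness and accumulation points.} Since $\widehat{\mP}^*$ is the set of $y$-components of extreme points of a single polyhedron, it is finite, hence bounded, so $\{\widehat y^k\}$ is bounded. Any accumulation point lies in the closed set $\widehat{\mP}^*$ because its distance to it is zero.
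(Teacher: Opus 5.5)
Your proof is correct and follows the paper's argument. You combine the Wolfe/Lipschitz lower bound on $\alpha^k$ with the sufficient-decrease condition to get $\nabla\varphi(\widehat y^k)=\nabla\varphi(y^k)\to0$, use the error bound \eqref{eq:error-bound} for the distances to $\mP^*$, and invoke Proposition~\ref{prop:dist-hatyhatP-yP} with the finiteness of $\widehat{\mP}^*$ for the last part; the differences are cosmetic (a uniform step-size bound via \eqref{eq:mnablaykdk} instead of summing $\langle\nabla\varphi(\widehat y^k),d^k\rangle^2/\|d^k\|^2$, and \eqref{eq:quad-bound} instead of continuity at accumulation points for $\varphi(y^k)\to\varphi^*$).
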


\begin{proof}
The Wolfe condition~\eqref{eq:Wolfe-2} and the Lipschitz continuity of $\nabla \varphi$ yield
\begin{equation*}
    \alpha^k L_{\varphi} \|d^k\|^2 \ge \langle \nabla \varphi(\widehat y^k + \alpha^k d^k) - \nabla \varphi(\widehat y^k), d^k \rangle
    \ge (\gamma_2 - 1)\,\langle \nabla \varphi(\widehat y^k), d^k \rangle
    \quad \forall\, k \ge 0,
\end{equation*}
where $L_{\varphi}$ denotes the Lipschitz constant of $\nabla \varphi$,
satisfying $0 < L_{\varphi} \le \|A\|^2$.
Thus, we have
\begin{equation*}
    \alpha^\nuk\ge \frac{(\gamma_2-1) \la\nabla\varphi(\widehat y^\nuk), d^k\ra}{L_{\varphi} \|d^k\|^2} \quad \forall \, k\ge 0.
\end{equation*}
Substituting this into the Wolfe condition \eqref{eq:Wolfe-1}, we obtain
\begin{equation}\label{eq:value_convergence}
    \varphi(\widehat y^{\nuk}) - \frac{\gamma_1 (1-\gamma_2) \la\nabla\varphi(\widehat y^\nuk), d^k\ra^2}{L_{\varphi} \|d^k\|^2} \ge \varphi(\widehat y^{\nuk}+\alpha^\nuk d^\nuk)=\varphi(y^{\nuk + 1})\ge \varphi(\widehat y^{\nuk + 1}),
\end{equation}
which, together with the fact that $\varphi$ is bounded from below, implies
\begin{equation}\label{eq:converge-value}
    \varphi(\widehat y^\nuk)\downarrow \varphi^\infty \in \Rbb \mbox{ as } k\to \infty  \quad\quad\mbox{and}\quad\quad  \sum_{\nuk=1}^{\infty} \frac{\la\nabla\varphi(\widehat y^\nuk), d^k\ra^2}{\|d^k\|^2}<+\infty.
\end{equation}
Hence, we have
\begin{equation}\label{eq:Wolfe-lim-0}
    \frac{\la\nabla\varphi(\widehat y^\nuk), d^k\ra}{\|d^k\|}\to 0 \mbox{ as } k\to \infty.
\end{equation}
Then, it follows from \eqref{eq:Wolfe-lim-0}, \eqref{eq:decentdk} and \eqref{eq:dk-nablavarphi-bound}  that
\begin{equation*}
    \|\nabla\varphi(\widehat y^\nuk)\|=\frac{\la\nabla\varphi(\widehat y^\nuk), d^k\ra}{\|d^k\|}\times \frac{\|\nabla \varphi(\widehat y^\nuk)\|^2}{\langle \nabla \varphi(\widehat y^\nuk) , d^\nuk \rangle }\times\frac{\|d^\nuk\|}{\|\nabla\varphi(\widehat y^\nuk)\|}\to 0 \mbox{ as } k\to \infty.
\end{equation*}
Thus, we conclude that
\begin{equation*}
    \lim_{\nuk\to \infty} \nabla \varphi(\widehat y^\nuk) = 0,
\end{equation*}
which, together with the error bound \eqref{eq:error-bound} in Lemma \ref{lemma:error-bound}, implies
\[
    \lim_{\nuk\to \infty} \dist(\widehat y^\nuk,\mP^*)= 0.
\]
Since $(\widehat y^{\nuk},  \widehat\ww^{\nuk}) \in {\cal E}(\mQ(y^{\nuk}, \ww^{\nuk}))$ for all $k\ge 0$, we know from Proposition \ref{prop:mQ-property} that
\[\nabla\varphi(y^k) = A\Pi_K(A^\top  y^k + c) - b = A\Pi_K(A^\top  \widehat y^k + c) - b =\nabla\varphi(\widehat y^k).\]
Thus, \eqref{eq:error-bound} further implies $\dist(y^k, \mP^*)\to 0$ as $k\to \infty$. Then, by Proposition~\ref{prop:dist-hatyhatP-yP}, we have that
there exists a constant $C'>0$ such that
\[
\dist(\widehat y^k,\widehat{\mP}^*)
\le
C'\dist(y^k,\mP^*)
\quad \mbox{for all sufficiently large $k$}.
\]
Consequently,
\(
\lim_{k\to\infty}
\dist(\widehat y^k,\widehat{\mP}^*)=0.
\)
Since $\widehat{\mP}^*$ is finite, the sequence $\{\widehat y^k\}$ is bounded,
and every accumulation point $\widehat y^*$ belongs to $\widehat{\mP}^*$. Then, the continuity of $\varphi$ and \eqref{eq:converge-value} yield
\(
\varphi(\widehat y^k)\to\varphi^*.
\)
The inequalities in \eqref{eq:value_convergence} then imply that
\(
\varphi(y^k)\to\varphi^* 
\) as $k\to\infty.$
$\hfill \Box$
\end{proof}

The preceding global convergence result highlights an important distinction between our approach and that of~\cite{hu2024semismooth}. The convergence analysis in~\cite{hu2024semismooth} is inherently local. A direct globalization of the algorithm proposed therein may suffer from cycling, for which randomization is used as an empirical remedy \cite[page~746]{hu2024semismooth}. In contrast, Proposition~\ref{prop:alg-find-ex-Q-monotone} shows that our primal--dual extreme-point identification procedure admits a monotone simplex-type descent mechanism over the lifted set $\mathcal{Q}(y,w)$. Combined with the Wolfe line search in Algorithm~\ref{alg:global}, this mechanism yields a provably globally convergent framework while retaining the local superlinear convergence rate established in the following proposition.

\begin{proposition}\label{prop:converg-rate-global}
	Let $ \{y^{\nuk}\} $ and $ \{\widehat{y}^{\nuk}\} $ be the infinite sequences generated by Algorithm \ref{alg:global}. Then, the line search step in Algorithm~\ref{alg:global} satisfies
    \[
        \alpha^k \equiv 1 \quad \mbox{ and } \quad {y}^{k+1} = \widehat y^k + d^k \quad \mbox{for all sufficiently large $k$}.
    \]
    Thus, $\{y^k\}$ is bounded and any accumulation point of $\{y^k\}$ lies in ${\widehat \mP}^*$.
	Moreover, it holds that for all sufficiently large $k$,
	\[
\dist(\widehat y^{k+1}, \widehat{\mP}^*)
=
{\cal O}\bigl(\dist^{\,1+\vartheta}(\widehat y^k, \widehat{\mP}^*)\bigr),
\quad
\dist(\widehat y^{k+1}, \mP^*)
=
{\cal O}\bigl(\dist^{\,1+\vartheta}(\widehat y^k, \mP^*)\bigr),
\]
and
\[
\dist(y^{k+1}, \widehat{\mP}^*)
=
{\cal O}\bigl(\dist^{\,1+\vartheta}(y^k, \widehat{\mP}^*)\bigr), \quad \dist(y^{k+1}, {\mP}^*)
=
{\cal O}\bigl(\dist^{\,1+\vartheta}(y^k, {\mP}^*)\bigr).
\]
\end{proposition}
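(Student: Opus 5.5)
Since $\{\widehat y^k\}$ stays near the finite set $\widehat{\mP}^*$ by Proposition~\ref{prop:global-convergence}, the plan is to show that the unit step is eventually accepted by the Wolfe conditions. After that, the superlinear estimates follow the computation in the proof of Theorem~\ref{thm:convergence-alg}.

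\textbf{Step 1 (error at the unit step).} Fix large $k$ and pick $\overline y^k\in\Pi_{\widehat{\mP}^*}(\widehat y^k)$. Since $\dist(\widehat y^k,\widehat{\mP}^*)\to0$ and $\widehat{\mP}^*$ is finite, \eqref{eq:semismooth-varphi} eventually gives the exact expansion
\[
\nabla\varphi(\widehat y^k)-\nabla\varphi(\overline y^k)-W^k(\widehat y^k-\overline y^k)=0 .
\]
Combining this with $\nabla\varphi(\overline y^k)=0$, the inexactness rule and Lemma~\ref{lemma:unibounded-W}, I obtain
\[
\|\widehat y^k+d^k-\overline y^k\|\le C_{\cal W}\|\nabla\varphi(\widehat y^k)\|^{1+\vartheta}.
\]
Together with Lipschitz continuity, $\|\nabla\varphi(\widehat y^k)\|\le\|A\|^2\|\widehat y^k-\overline y^k\|$. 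Hence $e^k:=\widehat y^k+d^k-\overline y^k$ is of order $\epsilon_k^{1+\vartheta}$, where $\epsilon_k:=\dist(\widehat y^k,\widehat{\mP}^*)$.

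\textbf{Step 2 (Wolfe conditions at $\alpha=1$).} This is the main obstacle, because the usual argument needs a second-order expansion and $\varphi$ is only piecewise quadratic. I would use the quadratic upper bound \eqref{eq:quad-bound}: $\varphi(\widehat y^k+d^k)-\varphi^*\le C_1\|e^k\|^2=o(\epsilon_k^2)$.

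For the other side, I would lower-bound $\varphi(\widehat y^k)-\varphi^*$ by a multiple of $\epsilon_k^2$. Convexity gives
\[
\varphi(\widehat y^k)-\varphi^*\le\langle\nabla\varphi(\widehat y^k),\widehat y^k-\overline y^k\rangle,
\]
but a lower bound is what is needed here. The plan is to use the exact affine relation $\nabla\varphi(\widehat y^k)=W^k(\widehat y^k-\overline y^k)$ and integrate along the segment, where $\nabla\varphi(\overline y^k+t(\widehat y^k-\overline y^k))=tW^k(\widehat y^k-\overline y^k)$ holds for small neighborhoods by \eqref{eq:semismooth-varphi}, applied at $\overline y^k$ with $W^k\in\partial^2\varphi(y)$ along the segment. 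If this works, it gives
\[
\varphi(\widehat y^k)-\varphi^*=\tfrac12\langle \widehat y^k-\overline y^k,W^k(\widehat y^k-\overline y^k)\rangle
\]
and hence $\ge\tfrac{c_2}2\epsilon_k^2$. If the segment argument fails, I would fall back on a direct estimate: writing $-\langle\nabla\varphi(\widehat y^k),d^k\rangle=\langle W^k h,h\rangle+o(\epsilon_k^2)$ with $h=\widehat y^k-\overline y^k$, the Armijo condition \eqref{eq:Wolfe-1} with $\gamma_1<1/2$ reduces to
\[
\tfrac12\langle W^kh,h\rangle-o(\epsilon_k^2)\ge\gamma_1\langle W^kh,h\rangle ,
\]
which holds for large $k$ since $\langle W^kh,h\rangle\ge c_2\epsilon_k^2$. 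For the curvature condition \eqref{eq:Wolfe-2}, $\nabla\varphi(\widehat y^k+d^k)=O(\|e^k\|)=o(\epsilon_k)$ while $\|d^k\|=O(\epsilon_k)$, so the left side is $o(\epsilon_k^2)$ and thus at least $\gamma_2\langle\nabla\varphi(\widehat y^k),d^k\rangle\le-\gamma_2c_2\epsilon_k^2/\dots$. Since $\alpha=1$ is tried first, $\alpha^k=1$ eventually.

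\textbf{Step 3 (rates).} With $y^{k+1}=\widehat y^k+d^k$, Step 1 gives $\dist(y^{k+1},\widehat{\mP}^*)=O(\epsilon_k^{1+\vartheta})$. Then:
\begin{itemize}
\item[(a)] Proposition~\ref{prop:dist-hatyhatP-yP} gives $\epsilon_{k+1}\le C\dist(y^{k+1},\mP^*)\le C\dist(y^{k+1},\widehat{\mP}^*)$, which is the first rate.
\item[(b)] For the $\mP^*$ versions I would use $\epsilon_k\le C\dist(y^k,\mP^*)$ (Proposition~\ref{prop:dist-hatyhatP-yP}) and $\dist(\widehat y^k,\mP^*)\le C\|\nabla\varphi(\widehat y^k)\|$ (error bound \eqref{eq:error-bound}).
\item[(c)] To control $\epsilon_k$ by $\dist(\widehat y^k,\mP^*)$, I would apply Proposition~\ref{prop:dist-hatyhatP-yP} to the pair $(\widehat y^k,\widehat w^k)$, which is its own extreme point.
\item[(d)] Chaining these, with $\dist(\cdot,\mP^*)\le\dist(\cdot,\widehat{\mP}^*)$, yields all four estimates.
\end{itemize}
Boundedness of $\{y^k\}$ and accumulation points in $\widehat{\mP}^*$ follow from $\|y^{k+1}-\overline y^k\|\to0$ and the finiteness of $\widehat{\mP}^*$.
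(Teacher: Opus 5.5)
Your argument is correct, and in Steps 2 and 3 it takes a more elementary route than the paper.

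\textbf{Comparison with the paper.} For the Armijo test, the paper invokes second-order expansions for $SC^1$ functions \cite{pang1995globally,li2018efficiently} at both $\widehat y^k$ and $\widehat y^k+d^k$, the latter with an auxiliary $V^k\in\partial^2\varphi(\widehat y^k+d^k)$. From these it derives $\varphi(\widehat y^k+d^k)-\varphi(\widehat y^k)-\frac12\langle\nabla\varphi(\widehat y^k),d^k\rangle=o(\|d^k\|^2)$. You instead combine two facts:
\begin{itemize}
\item the exact identity $\varphi(\widehat y^k)-\varphi^*=\frac12\langle W^kh,h\rangle$;
\item the crude bound \eqref{eq:quad-bound} at $\widehat y^k+d^k$, which suffices because $\|e^k\|=o(\epsilon_k)$.
\end{itemize}
For the rates, the paper applies Proposition~\ref{prop:dist-hatyhatP-yP} at $(\widehat y^{k+1},\widehat w^{k+1})$. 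It then passes from $\dist(\widehat y^{k+1},\mP^*)$ to $\dist(y^{k+1},\mP^*)$ via \eqref{eq:quad-growth}, \eqref{eq:quad-bound} and the monotonicity $\varphi(\widehat y^{k+1})\le\varphi(y^{k+1})$, and it cites Theorem~\ref{thm:convergence-alg} for the last estimate. You apply Proposition~\ref{prop:dist-hatyhatP-yP} directly at $(y^{k+1},w^{k+1})$, which gives $\epsilon_{k+1}\le C\dist(y^{k+1},\mP^*)$ in one line. That route needs neither the growth condition nor the monotone correction. Your version exploits the polyhedral structure and is self-contained; the paper's relies only on semismoothness-type expansions, so it is less tied to that structure but depends on external results.

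\textbf{The step you leave conditional.} The segment claim needs a proof, because $W^k$ is built at $\widehat y^k$ and its membership in $\partial^2\varphi(y_t)$, with $y_t:=\overline y^k+th$, is not automatic. It does hold:
\begin{enumerate}
\item Put $s_0=A^\top\overline y^k+c$, $s_1=A^\top\widehat y^k+c$, and $x_i=\Pi_K(s_i)$.
\item By Lemma~\ref{lemma:representation}, $\widehat w^k\in\mM(s_1)$. Also $J^k\subseteq I(s_1)$ and $G_{J^k}$ has full row rank, so $J^k\in{\cal D}(s_1)$.
\item For large $k$, Lemma~\ref{lemma:semismooth_HS} (uniform over the finite set $\widehat\mP^*$) gives $J^k\in{\cal D}(s_0)$. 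That is, some $w_0\in\mM(s_0)$ has $\supp(w_0)\subseteq J^k\subseteq I(s_0)$.
\item The pair $\bigl((1-t)x_0+tx_1,\,(1-t)w_0+t\widehat w^k\bigr)$ satisfies \eqref{eq:kkt-proj-K} at $(1-t)s_0+ts_1=A^\top y_t+c$. Complementarity holds because $G_{J^k}x_0=g_{J^k}=G_{J^k}x_1$.
\item Hence $\Pi_K$ is affine on the segment and, using $Ax_0=b$, $\nabla\varphi(y_t)=t\nabla\varphi(\widehat y^k)=tW^kh$. In particular $J^k\in{\cal D}(A^\top y_t+c)$.
\end{enumerate}
Integrating gives $\varphi(\widehat y^k)-\varphi^*=\frac12\langle W^kh,h\rangle\ge\frac{c_2}{2}\epsilon_k^2$ exactly.

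\textbf{Your fallback is circular.} The Armijo condition reduces to $\frac12\langle W^kh,h\rangle-o(\epsilon_k^2)\ge\gamma_1\langle W^kh,h\rangle$ only if you already know $\varphi(\widehat y^k)-\varphi^*\ge\frac12\langle W^kh,h\rangle-o(\epsilon_k^2)$. That is the same second-order information the segment argument supplies. Quadratic growth \eqref{eq:quad-growth} together with Proposition~\ref{prop:dist-hatyhatP-yP} would only cover $\gamma_1$ below a problem-dependent threshold. Drop the fallback and keep the segment argument with the justification above.
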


\begin{proof}
	By \eqref{eq:dk-nablavarphi-bound}, we have
    \[
        \frac{(1 - \zeta)}{c_1} \|\nabla \varphi(\widehat y^k)\| \le \|d^k\| \le \frac{(1+\zeta)}{c_2} \|\nabla \varphi(\widehat y^k)\|,
    \]
    where $c_1$ and $c_2$ are defined in \eqref{eq:constant}. Since $\nabla \varphi(\widehat y^k) \to 0$ in \eqref{eq:dist-hatyk-P}, it follows that $d^k \to 0$ as $k \to \infty$.

For each $k$, choose
\(
\widehat y_k^*
\in
\Pi_{\widehat{\mathcal P}^*}(\widehat y^k).
\)
Then
\(
\|\widehat y^k-\widehat y_k^*\|
=
\dist(\widehat y^k,\widehat{\mathcal P}^*)
\to 0.
\)
Since $\widehat{\mathcal P}^*$ is finite, the neighborhoods in
\eqref{eq:semismooth-varphi} can be chosen uniformly over all
$\widehat y^*\in\widehat{\mathcal P}^*$. Hence, there exists $N_0$
such that, for all $k\ge N_0$,
\begin{equation}\label{eq:app_kN0}
\nabla\varphi(\widehat y^k)
-\nabla\varphi(\widehat y_k^*)
-W^k(\widehat y^k-\widehat y_k^*)
=0.
\end{equation}
Therefore,
we see that for all $\nuk \ge N_0$
\begin{equation*}
\begin{aligned}
	\|\widehat y^\nuk+d^\nuk-\widehat y^*_k \|
	={}& \|-(W^\nuk)^{-1}\left(\nabla \varphi( \widehat y^\nuk) - \nabla \varphi(\widehat y^*_k)-W^\nuk( \widehat y^\nuk - \widehat y^*_k) -(W^\nuk d^\nuk + \nabla \varphi( \widehat y^\nuk)) \right) \| \\
	\le{} & \|(W^\nuk)^{-1}\|\|\nabla \varphi(\widehat y^\nuk)\|^{1+\vartheta} \\
	\le{}&  \frac{c_1^{1+\vartheta}}{c_2(1 - \zeta)^{1+\vartheta}}\big \|d^\nuk\|^{1+\vartheta},
\end{aligned}
\end{equation*}
i.e., for all $\nuk \ge N_0$,
\begin{equation}\label{eq:ykdkmyhatk}
\widehat y^\nuk + d^\nuk - \widehat y^*_k = {\cal O}(\|\nabla \varphi(\widehat y^\nuk)\|^{1+\vartheta}) = {\cal O}(\|d^\nuk\|^{1+\vartheta}).
\end{equation}
Next, invoking \cite[Proposition~7]{li2018efficiently}, \cite[Theorem~2.1]{pang1995globally}, and the fact that $\widehat{\cal P}^*$ is finite, there exists an integer $N_1$ such that, for all $k \ge N_1$, the following expansions hold:
\begin{equation*}
	\varphi(\widehat y^\nuk + d^\nuk) - \varphi(\widehat y_\nuk^*) - \langle \nabla \varphi(\widehat y_\nuk^*) , \widehat y^\nuk + d^\nuk - \widehat y_\nuk^* \rangle - \frac{1}{2} (\widehat y^\nuk + d^\nuk - \widehat y_\nuk^*)^\top  V^k (\widehat y^\nuk + d^\nuk - \widehat y_\nuk^*) = {o}(\| \widehat y^\nuk + d^\nuk - \widehat y_\nuk^* \|^2)
\end{equation*}
and
\begin{equation*}
	\varphi(\widehat y^\nuk) - \varphi(\widehat y_\nuk^*) - \langle \nabla \varphi(\widehat y_\nuk^*) , \widehat y^\nuk - \widehat y_\nuk^* \rangle - \frac{1}{2} (\widehat y^\nuk - \widehat y_\nuk^*)^\top  W^k (\widehat y^\nuk  - \widehat y_\nuk^*) = {o}(\| \widehat y^\nuk - \widehat y_\nuk^* \|^2),
\end{equation*}
where $V^k \in \partial^2 \varphi(\widehat y^\nuk + d^\nuk)$. 
The definitions of $\partial^2 \varphi$ and $\partial_{\rm HS}\Pi_K$ in \eqref{eq:gen-jac} and \eqref{eq:HS-Jac-s} imply that the matrices $\{V^k\}$ are uniformly bounded. Then, from \eqref{eq:ykdkmyhatk}, we deduce that,
for all $k\ge \max\{N_0,N_1\}$,
\[
\varphi(\widehat y^\nuk + d^\nuk) - \varphi(\widehat y^\nuk)
= -\frac{1}{2}\langle \widehat y^\nuk - \widehat y^*_k, W^k(\widehat y^\nuk - \widehat y^*_k)\rangle + o(\|d^\nuk\|^2).
\]
We further know from \eqref{eq:app_kN0} and \eqref{eq:ykdkmyhatk} that
\begin{equation}
	\label{eq:proof_ls0}
	\begin{aligned}
		&\varphi(\widehat y^\nuk + d^\nuk) - \varphi(\widehat y^\nuk) - \frac{1}{2} \langle \nabla \varphi(\widehat y^\nuk), d^\nuk \rangle\\
		={}&  -\frac{1}{2}\langle \widehat y^\nuk + d^\nuk - \widehat y^*_k, W^k(\widehat y^\nuk - \widehat y^*_k)\rangle + \frac{1}{2}\langle d^\nuk, W^k(\widehat y^\nuk - \widehat y^*_k) - \nabla \varphi(\widehat y^\nuk) \rangle + o(\|d^\nuk\|^2)
		\\
		={}& -\frac{1}{2} \langle \nabla \varphi(\widehat y^\nuk) - \nabla \varphi(\widehat y^*_{\nuk}) - W^k(\widehat y^\nuk - \widehat y^*_k), d^k \rangle + o(\|d^k\|^2)
		\\
		={}& o(\|d^k\|^2),
	\end{aligned}
\end{equation}
where the second equality follows from \eqref{eq:ykdkmyhatk} and the fact that $W^k(\widehat y^k - \widehat y_k^*) = {\cal O}(\|d^k\|)$.
This, together with \eqref{eq:mnablaykdk}, further implies that for all $k$ sufficiently large,
\begin{align*}
\varphi(\widehat y^\nuk + d^\nuk) - \varphi(\widehat y^\nuk) = {}&
\frac{1}{2} \langle \nabla \varphi(\widehat y^\nuk), d^\nuk \rangle + o(\|d^k\|^2) \\
= {}& \gamma_1 \langle \nabla \varphi(\widehat y^\nuk), d^\nuk \rangle  + \left(\frac{1}{2} - \gamma_1 \right) \langle \nabla \varphi(\widehat y^\nuk), d^\nuk \rangle  + o(\|d^\nuk\|^2) \\
\le {}& \gamma_1 \langle \nabla \varphi(\widehat y^\nuk), d^\nuk \rangle  -\frac{c_2^2 }{(1+\zeta)^2c_1}(\frac{1}{2} - \gamma_1) \|d^k\|^2 + o(\|d^k\|^2)\\
\le {}& \gamma_1 \langle \nabla \varphi(\widehat y^\nuk), d^\nuk \rangle.
\end{align*}
Thus, we have verified that the sufficient decrease condition \eqref{eq:Wolfe-1} of the Wolfe conditions is satisfied for $\alpha^k = 1$ for all $k$ sufficiently large.

Next, we verify the curvature condition \eqref{eq:Wolfe-2} of the Wolfe conditions. Notice that the Lipschitz continuity of $\nabla\varphi$ and \eqref{eq:ykdkmyhatk} imply that
\[
    \nabla\varphi(\widehat y^k + d^k) = \nabla\varphi(\widehat y^k + d^k) - \nabla\varphi(\widehat y_k^*) = O(\|\widehat y^k + d^k - \widehat y_k^*\|) = o(\|d^k\|).
\]
Thus, by \eqref{eq:mnablaykdk}, it holds for $k$ sufficiently large,
\begin{equation*}
    \begin{aligned}
        \la\nabla\varphi(\widehat y^\nuk + d^\nuk),d^\nuk\ra - \gamma_2 \la \nabla \varphi(\widehat y^\nuk), d^\nuk \ra & \ge  o(\|d^k\|^2) + \frac{c_2^2}{(1+\zeta)^2c_1} \gamma_2 \|d^k\|^2 \ge 0.
    \end{aligned}
\end{equation*}
That is, \eqref{eq:Wolfe-2} holds with $\alpha^k = 1$ for all $k$ sufficiently large. Hence, we conclude that
\begin{equation}\label{eq:ykp1}
    y^{k+1} = \widehat y^k + d^k \quad \mbox{ for all $k$ sufficiently large.}
\end{equation}
Since $d^k\to 0$, the boundedness of $\{\widehat y^k\}$ established in Proposition~\ref{prop:global-convergence} implies that $\{y^k\}$ is also bounded. Moreover, $\{\widehat y^k\}$ and $\{y^k\}$ share the same set of accumulation points. Hence, by Proposition~\ref{prop:global-convergence}, every accumulation point of $\{y^k\}$ also belongs to ${\widehat \mP}^*$.

Next, we focus on establishing the superlinear convergence of $\{\widehat y^k\}$ and $\{y^k\}$. Note that \eqref{eq:dist-hatyk-P} in Proposition \ref{prop:global-convergence} and Proposition \ref{prop:dist-hatyhatP-yP} imply that there exists a constant $C' >0$ such that for $k$ sufficiently large
\begin{equation}\label{eq:dist-haty-hatp-p}
{\rm dist}(\widehat y^{k+1}, \widehat\mP^*) \le C' {\rm dist}(\widehat y^{\nuk+1},\mP^*).
\end{equation}
Meanwhile, \eqref{eq:converge-func-value} in Proposition \ref{prop:global-convergence} and Lemma \ref{lemma:error-bound} imply that there exist constants $C_0, C_1 >0$ such that for all $k$ sufficiently large
\begin{equation}\label{eq:dist-hatyp-distyhatp}
\begin{aligned}
\dist(\widehat y^{k+1}, \mP^*) \le \sqrt{\frac{1}{C_0}(\varphi(\widehat y^{\nuk+1})-\varphi^*)} \le{}& \sqrt{\frac{1}{C_0}(\varphi( y^{\nuk+1})-\varphi^*)} \\[2pt]
\le{}& \sqrt{\frac{C_1}{C_0}}\dist(y^{k+1}, \mP^*) \le  \sqrt{\frac{C_1}{C_0}}\dist(y^{k+1}, \widehat \mP^*).
\end{aligned}
\end{equation}
Moreover, by \eqref{eq:ykp1}, \eqref{eq:ykdkmyhatk}, and the Lipschitz continuity of $\nabla \varphi$, we have
\[
\dist(y^{k+1}, \widehat{\mP}^*)
\;\le\;
\|\widehat y^k + d^k - \widehat y_k^*\|
\;=\;
{\cal O}\bigl(\|\nabla \varphi(\widehat y^k)\|^{1+\vartheta}\bigr)
\;=\;
{\cal O}\bigl(\dist^{\,1+\vartheta}(\widehat y^k, \mP^*)\bigr)
\;=\;
{\cal O}\bigl(\dist^{\,1+\vartheta}(\widehat y^k, \widehat{\mP}^*)\bigr).
\]
Combining this with \eqref{eq:dist-haty-hatp-p} and \eqref{eq:dist-hatyp-distyhatp} yields
\[
\dist(\widehat y^{k+1}, \widehat{\mP}^*)
=
{\cal O}\bigl(\dist^{\,1+\vartheta}(\widehat y^k, \widehat{\mP}^*)\bigr),
\quad
\dist(\widehat y^{k+1}, \mP^*)
=
{\cal O}\bigl(\dist^{\,1+\vartheta}(\widehat y^k, \mP^*)\bigr),
\]
and
\[
\dist(y^{k+1}, \widehat{\mP}^*) = {\cal O}\bigl( \dist^{1+\vartheta}(\widehat y^k, \widehat{\mP}^*) \bigr)
=
{\cal O}\bigl(\dist^{\,1+\vartheta}(y^k, \widehat{\mP}^*)\bigr).
\]
    Finally, $\dist(y^{\nuk+1}, \mathcal{P}^*) = \mathcal{O}\bigl( \dist^{1+\vartheta}(y^{\nuk}, \mathcal{P}^*) \bigr)$ follows directly from Theorem~\ref{thm:convergence-alg}, as $\alpha^k \equiv 1$ for all sufficiently large $k$, and $\dist(y^k,\mathcal P^*)\to0$ and $\min\{\zeta, \|\nabla\varphi(\widehat y^k)\|^\vartheta\}  \le  \|\nabla\varphi(\widehat y^k)\|^\vartheta\to 0$ as $k \to \infty$ by Proposition~\ref{prop:global-convergence}.
$\hfill \Box$
\end{proof}

\section{Numerical experiments}\label{sec:Numerical}
In this section, we conduct preliminary numerical experiments to examine the practical performance of the proposed semismooth Newton framework on polyhedral projection problems. In particular, we aim to evaluate the effectiveness of the extreme-point correction mechanism developed in this paper. Accordingly, our numerical study focuses on two aspects: the reliability of Algorithm~\ref{alg:global} in solving the tested instances to high accuracy, and its numerical performance compared with the classical uncorrected semismooth Newton (classical SSN) framework \cite{zhao2010newton,LiSunToh2020BirkhoffJacobian}. We emphasize that the purpose of these experiments is not to demonstrate the superiority of Algorithm~\ref{alg:global} over existing classical methods for projection problems or convex quadratic programs. Rather, our goal is to isolate and assess the effect of the proposed correction mechanism within a semismooth Newton framework for addressing singularity issues. Therefore, we do not compare with algorithms that are not based on semismooth Newton methods, such as interior-point methods, splitting methods or gradient-type methods.

We consider three classes of test problems. The first class is derived from the quadratically regularized optimal transport problem \eqref{prob:app-ot}. The second class is a real-data box-constrained projection problem motivated by feasibility restoration in battery scheduling. The third class is derived from convex Markov decision processes and concerns the projection of state-action occupation measures onto a simplex under discounted flow-balance constraints. For these classes, we compare the semismooth Newton method with extreme-point correction against its classical uncorrected counterpart, in which the possible singularity of the generalized Jacobian of $\nabla \varphi$ is handled by adding adaptive small perturbations\footnote{See the implementation details in \cite{LiSunToh2020BirkhoffJacobian}. The corresponding codes are available at \url{https://github.com/MatOpt/Proj_Birkhoff}.}. For each test, both methods use the same randomly generated $y^0$, from which the same associated multiplier $w^0 \in {\cal M}(A^\top y^0 + c)$ is computed.

Both tested algorithms are implemented in MATLAB. The experiments are conducted by running MATLAB (version 25.2, R2025b) on a Mac Studio workstation (Apple M3 Ultra, 28-core CPU, 256 GB of RAM). 
In our numerical experiments, we measure the accuracy of an approximate optimal solution
$y$ for~\eqref{prob:dual-K-lin-proj} by using the following relative KKT residual:
\[
\eta_{\rm KKT}
:=
\frac{\|A\Pi_K(A^\top y+c)-b\|}{1+\|b\|}.\]
Let $\epsilon >0$ be a prescribed tolerance. We terminate the tested algorithms when $\eta_{\rm KKT} \le \epsilon$ or when the maximum number of iterations is reached.

\subsection{Quadratically regularized optimal transport}\label{sec:num-qrot}
We first test our method on quadratically regularized optimal transport problems of the form \eqref{prob:app-ot}. 
For convenience, we recall the formulation:
\[
\min_{X\in\Rbb^{M \times N}}
\left\{
\langle C,X\rangle + \frac{\rho}{2}\|X\|_F^2
\;\middle|\;
X\one_N = \omega,\;\;
X^\top \one_M = \nu,\;\;
X\ge0
\right\}.
\]
Here, $C\in\Rbb^{M\times N}$ is the cost matrix, 
$\omega\in\Rbb^M_+$ and $\nu\in\Rbb^N_+$ are two nonnegative marginal vectors satisfying
$\one_M^\top \omega=\one_N^\top \nu$, and $\rho>0$ is a prescribed regularization parameter. In our experiments, we remove the last scalar equation in \(X^\top \one_M=\nu\), which is redundant, so that the equality-constraint matrix has full row rank.
Below, we report numerical results for quadratically regularized optimal transport problems using both synthetic and real data. 

\subsubsection{Synthetic Gaussian-mixture instances}
We first consider synthetic Gaussian-mixture data and generate the problem instances following the procedure in \cite[Section~4.1]{chu2023efficient}.
In this setting, we assume that $M = N$ and the two marginal vectors $\omega$ and $\nu$ are generated from two independent random vectors with entries sampled uniformly from $(0,1)$, and are then normalized to have unit total mass, i.e., $\one_M^\top \omega = \one_N^\top \nu = 1$. 

The support points are generated from two Gaussian-mixture distributions. 
Specifically, we choose $r$ mixture centers
$
\mu_1,\ldots,\mu_r
$
equally spaced on an interval $[\ell,u]$, where $\ell<u$, and use a common variance parameter $\sigma^2>0$ for all Gaussian components. 
Two independent mixture-weight vectors
$
\alpha\in\Rbb^r_+,
\beta\in\Rbb^r_+
$
are generated from uniform random vectors and normalized to have unit total mass.
This defines two one-dimensional Gaussian-mixture distributions
\[
p_1(t)
=
\sum_{k=1}^r \alpha_k \mathcal{N}(t;\mu_k,\sigma^2),
\qquad
p_2(t)
=
\sum_{k=1}^r \beta_k \mathcal{N}(t;\mu_k,\sigma^2).
\]
The support points
$
\{\xi_i\}_{i=1}^M\subset\Rbb^3,
\{\zeta_j\}_{j=1}^N\subset\Rbb^3
$
are then generated by sampling their coordinates independently from these two mixtures. Specifically, the coordinates of each $\xi_i$ are sampled from $p_1$, while the coordinates of each $\zeta_j$ are sampled from $p_2$. The cost matrix is defined by the squared Euclidean distance
\[
C_{ij}
=
\|\xi_i-\zeta_j\|^2,
\qquad
1\le i\le M,\quad 1\le j\le N,
\]
and is rescaled by its largest entry.
In our experiments, we set $[\ell,u]=[-20,20]$, $r=5$, $\sigma^2=5$.
This construction yields a scalable family of dense instances, with the problem dimension set to $M=N\in\{100,200,500,1000,1200\}$ and the regularization parameter chosen as $\rho\in\{8\times 10^{-2},4\times 10^{-2},1\times 10^{-2},5\times 10^{-3}\}$.
We stop the tested algorithms when $\eta_{\rm KKT} \le 10^{-12}$ or when the iteration limit of $300$ is reached.

\begin{table}[htbp]
\centering
\caption{Comparisons between Algorithm~\ref{alg:global} and the classical SSN for synthetic Gaussian-mixture instances. In the table, ``iter.'' denotes the number of iterations, ``$\eta_{\rm KKT}$'' denotes the final relative KKT residual, and ``time'' denotes the total runtime in seconds.}
\label{tab:gmm-qrot-main}
\scriptsize
\begin{tabular}{cccccccccc}
\toprule
\multicolumn{4}{c}{Instance} & \multicolumn{3}{c}{Algorithm \ref{alg:global}} & \multicolumn{3}{c}{Classical SSN} \\
\cmidrule(lr){1-4}\cmidrule(lr){5-7}\cmidrule(lr){8-10}
$(M,N)$ & $n$ & $m_E$ & $\rho$ & iter. & $\eta_{\rm KKT}$ & time & iter. & $\eta_{\rm KKT}$ & time\\
\midrule
\multirow{4}{*}{$(100,100)$} & \multirow{4}{*}{$10{,}000$} & \multirow{4}{*}{$199$}
& 0.08 & 21 & 3.9e-15 & 0.187 & 54 & 1.4e-13 & 0.133 \\
& & & 0.04 & 25 & 7.0e-15 & 0.088 & 72 & 1.6e-15 & 0.104 \\
& & & 0.01 & 28 & 2.5e-14 & 0.160 & 128 & 7.2e-15 & 0.114 \\
& & & 0.005 & 39 & 5.7e-14 & 0.175 & 140 & 2.1e-14 & 0.087 \\
\midrule
\multirow{4}{*}{$(200,200)$} & \multirow{4}{*}{$40{,}000$} & \multirow{4}{*}{$399$}
& 0.08 & 33 & 4.2e-15 & 0.279 & 81 & 1.8e-15 & 0.183 \\
& & & 0.04 & 39 & 9.4e-15 & 0.351 & 122 & 1.1e-13 & 0.268 \\
& & & 0.01 & 54 & 3.6e-14 & 0.744 & 206 & 1.5e-14 & 0.445 \\
& & & 0.005 & 59 & 7.6e-14 & 0.963 & 241 & 2.7e-14 & 0.493 \\
\midrule
\multirow{4}{*}{$(500,500)$} & \multirow{4}{*}{$250{,}000$} & \multirow{4}{*}{$999$}
& 0.08 & 45 & 1.5e-14 & 2.062 & 121 & 2.1e-15 & 1.017 \\
& & & 0.04 & 56 & 2.8e-14 & 3.493 & 185 & 2.0e-13 & 1.464 \\
& & & 0.01 & 97 & 1.1e-13 & 7.817 & \textbf{300} & \textbf{1.2e-3} & 2.190 \\
& & & 0.005 & 106 & 2.2e-13 & 10.363 & \textbf{300} & \textbf{2.5e-2} & 2.031 \\
\midrule
\multirow{4}{*}{$(1000,1000)$} & \multirow{4}{*}{$1{,}000{,}000$} & \multirow{4}{*}{$1{,}999$}
& 0.08 & 51 & 7.6e-15 & 9.323 & \textbf{300} & \textbf{4.3e-9} & 5.216 \\
& & & 0.04 & 74 & 1.5e-14 & 16.409 & 278 & 4.9e-15 & 5.722 \\
& & & 0.01 & 143 & 5.7e-14 & 39.741 & \textbf{300} & \textbf{8.4e-2} & 6.150 \\
& & & 0.005 & 193 & 1.1e-13 & 62.578 & \textbf{300} & \textbf{1.1e-1} & 5.561 \\
\midrule
\multirow{4}{*}{$(1200,1200)$} & \multirow{4}{*}{$1{,}440{,}000$} & \multirow{4}{*}{$2{,}399$}
& 0.08 & 60 & 1.1e-14 & 19.069 & 213 & 2.7e-15 & 6.292 \\
& & & 0.04 & 80 & 2.1e-14 & 27.932 & \textbf{300} & \textbf{8.8e-3} & 8.533 \\
& & & 0.01 & 138 & 8.8e-14 & 71.851 & \textbf{300} & \textbf{7.0e-2} & 7.971 \\
& & & 0.005 & 204 & 1.6e-13 & 97.269 & \textbf{300} & \textbf{1.9e-1} & 7.693 \\
\bottomrule
\end{tabular}
\end{table}

The detailed comparison results are summarized in Table~\ref{tab:gmm-qrot-main}. For smaller instances, or when the regularization parameter is relatively large, the resulting problems are relatively well conditioned. In these cases, both methods converge reliably, but Algorithm~\ref{alg:global} generally requires fewer iterations than the classical SSN method. As the problem dimension increases or the regularization parameter decreases, the instances become increasingly difficult, and both methods require more iterations. However, the classical uncorrected method often stagnates and fails to achieve high accuracy within the prescribed limit of 300 iterations. In particular, for larger and more difficult instances, it terminates at relatively inaccurate solutions, with KKT residuals remaining on the order of ${\cal O}(10^{-2})$. In contrast, Algorithm~\ref{alg:global} consistently attains high accuracy across all tested instances, demonstrating its robustness in handling degeneracy and computing high-precision solutions. Regarding the per-iteration cost, the extreme-point correction step in Algorithm~\ref{alg:global} involves simplex-pivot operations and is therefore expected to be more expensive than the classical uncorrected method. Nevertheless, as shown in the table, the total runtime of Algorithm~\ref{alg:global} remains competitive for small-dimensional instances and for instances with relatively large regularization parameters.

\begin{figure}[htbp]
\centering
\includegraphics[width=\textwidth]{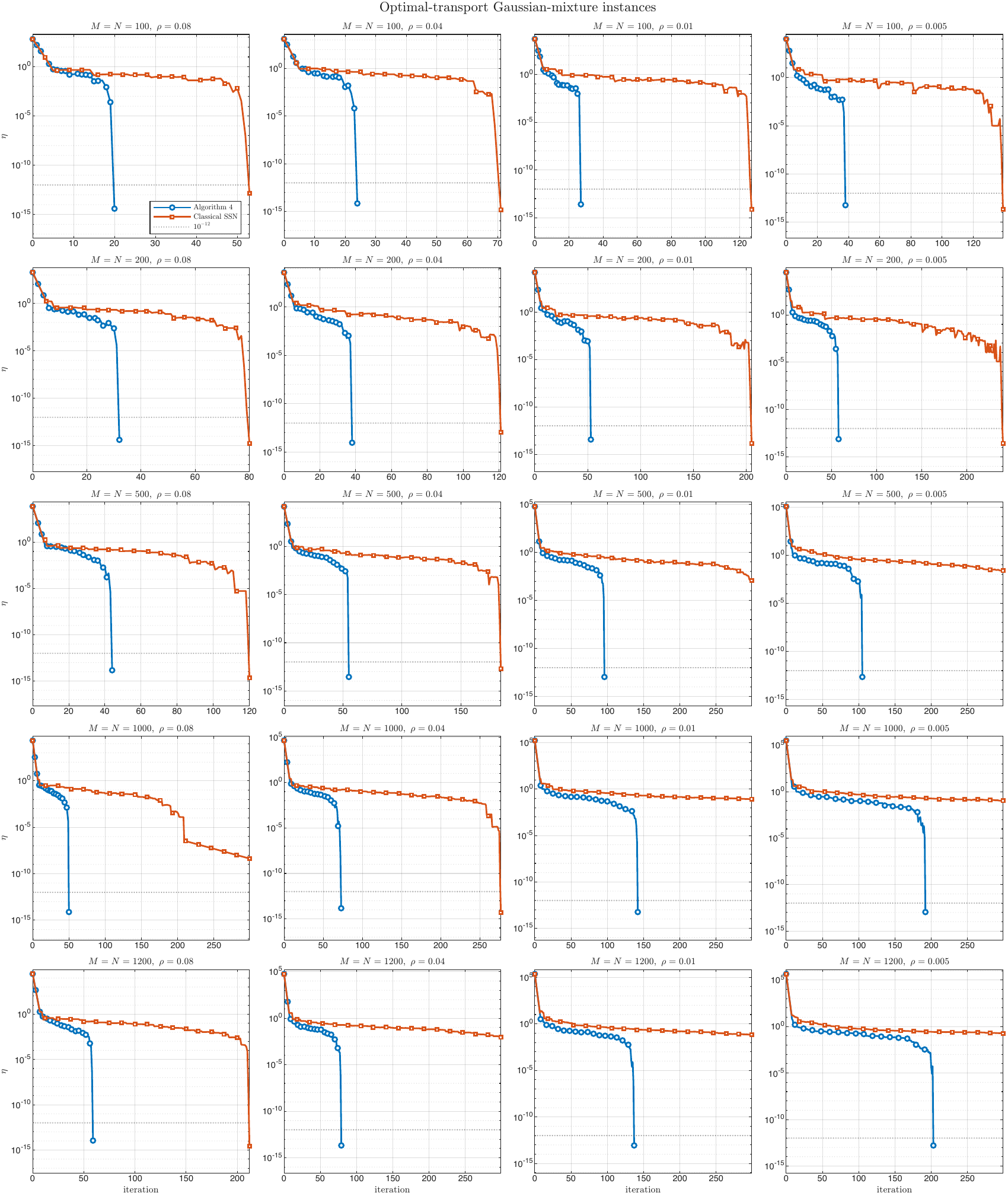}
\caption{Convergence histories of $\eta_{\rm KKT}$ versus the iteration count for synthetic Gaussian-mixture instances. Each panel compares Algorithm~\ref{alg:global} (blue) with the classical SSN (orange).}
\label{fig:optimal_transport_gaussian}
\end{figure}

Figure~\ref{fig:optimal_transport_gaussian} reports the convergence histories of the relative KKT residual \(\eta_{\rm KKT}\) versus the iteration count for all tested Gaussian-mixture instances. The plots show that the classical SSN suffers from prolonged stagnation on difficult instances, whereas Algorithm~\ref{alg:global} makes steady progress and eventually exhibits local superlinear or quadratic convergence.

\subsubsection{DOTmark image instances}
The second family is generated from the DOTmark collection \cite{schrieber2016dotmark}. We select two grayscale images of the same size from a prescribed DOTmark class. Their pixel intensities are first truncated below at zero, then vectorized and normalized to unit total mass. This produces two probability vectors
$
\omega,\nu\in\Rbb_+^{S^2},
$
where $S$ is the image side length. The transport support is the full $S\times S$ pixel grid. Thus each pixel location is represented by its two-dimensional grid coordinate, and the cost matrix is generated from the pairwise Manhattan distance between pixel locations:
\[
C_{ij}=\|p_i-p_j\|_1 = |x_i-x_j| + |y_i-y_j|,
\quad p_i=(x_i,y_i), \, p_j=(x_j,y_j), \quad i,j=1,\ldots,S^2.
\]
As in the synthetic tests, the cost matrix is normalized by its largest entry. In the experiments, we set $S=32$, and select three classes from the DOTmark collection, i.e., the ClassicImages class, the GRFmoderate class, and the GRFrough class. For each pair, we test five different regularization parameters $\rho\in\{8\times 10^{-1},4\times 10^{-1},1\times 10^{-1},5\times 10^{-2},1\times 10^{-2}\}$. The stopping criteria are the same as in the synthetic tests. The detailed results are summarized in Table~\ref{tab:dotmark-qrot-hess} and Figure \ref{fig:dotmark-eta}.

\begin{table}[!htbp]
\centering
\caption{Comparisons between Algorithm~\ref{alg:global} and the classical SSN for DOTmark image instances with $n=(S^2)^2=1{,}048{,}576$ and $m_E=2S^2-1=2{,}047$.}
\label{tab:dotmark-qrot-hess}
\scriptsize
\begin{tabular}{cccccccc}
\toprule
\multicolumn{2}{c}{Instance} & \multicolumn{3}{c}{Algorithm~\ref{alg:global}} & \multicolumn{3}{c}{Classical SSN} \\
\cmidrule(lr){1-2}\cmidrule(lr){3-5}\cmidrule(lr){6-8}
data set & $\rho$ & iter. & $\eta_{\rm KKT}$ & time & iter. & $\eta_{\rm KKT}$ & time \\
\midrule
\multirow{5}{*}{ClassicImages}
& 0.8  & 59 & 8.4e-15 & 18.745 & \textbf{300} & \textbf{9.5e-5} & 3.241 \\
& 0.4  & 65 & 1.7e-14 & 19.745 & \textbf{300} & \textbf{5.6e-4} & 3.274 \\
& 0.1  & 79 & 6.8e-14 & 19.101 & \textbf{300} & \textbf{2.8e-3} & 3.296 \\
& 0.05 & 78 & 1.5e-13 & 19.309 & \textbf{300} & \textbf{1.6e-2} & 4.242 \\
& 0.01 & 85 & 6.8e-13 & 20.384 & \textbf{300} & \textbf{8.0e-2} & 4.463 \\
\midrule
\multirow{5}{*}{GRFmoderate}
& 0.8  & 44 & 1.2e-14 & 18.131 & 199 & 3.7e-13 & 2.334 \\
& 0.4  & 56 & 2.3e-14 & 16.628 & 273 & 6.1e-15 & 3.126 \\
& 0.1  & 55 & 9.3e-14 & 16.536 & \textbf{300} & \textbf{2.6e-3} & 3.317 \\
& 0.05 & 67 & 1.8e-13 & 18.192 & \textbf{300} & \textbf{1.3e-2} & 3.446 \\
& 0.01 & 70 & 8.9e-13 & 21.644 & \textbf{300} & \textbf{1.4e-2} & 3.927 \\
\midrule
\multirow{5}{*}{GRFrough}
& 0.8  & 48 & 3.7e-15 & 18.326 & 236 & 2.4e-15 & 3.070 \\
& 0.4  & 54 & 7.6e-15 & 18.316 & \textbf{300} & \textbf{9.7e-5} & 3.202 \\
& 0.1  & 61 & 2.8e-14 & 17.264 & \textbf{300} & \textbf{1.1e-3} & 2.452 \\
& 0.05 & 66 & 5.8e-14 & 18.452 & \textbf{300} & \textbf{4.6e-3} & 2.720 \\
& 0.01 & 64 & 3.0e-13 & 19.423 & \textbf{300} & \textbf{8.0e-3} & 3.271 \\
\bottomrule
\end{tabular}
\end{table}

It can be observed from Table \ref{tab:dotmark-qrot-hess} that, for the DOTmark image instances, Algorithm~\ref{alg:global} consistently attains high accuracy for all tested regularization parameters. In contrast, the classical SSN method fails to reach high accuracy within the 300-iteration limit for 12 out of the 15 instances, with terminal KKT residuals ranging from approximately $10^{-4}$ to $10^{-1}$.

\begin{figure}[htbp]
\centering
\includegraphics[width=\textwidth]{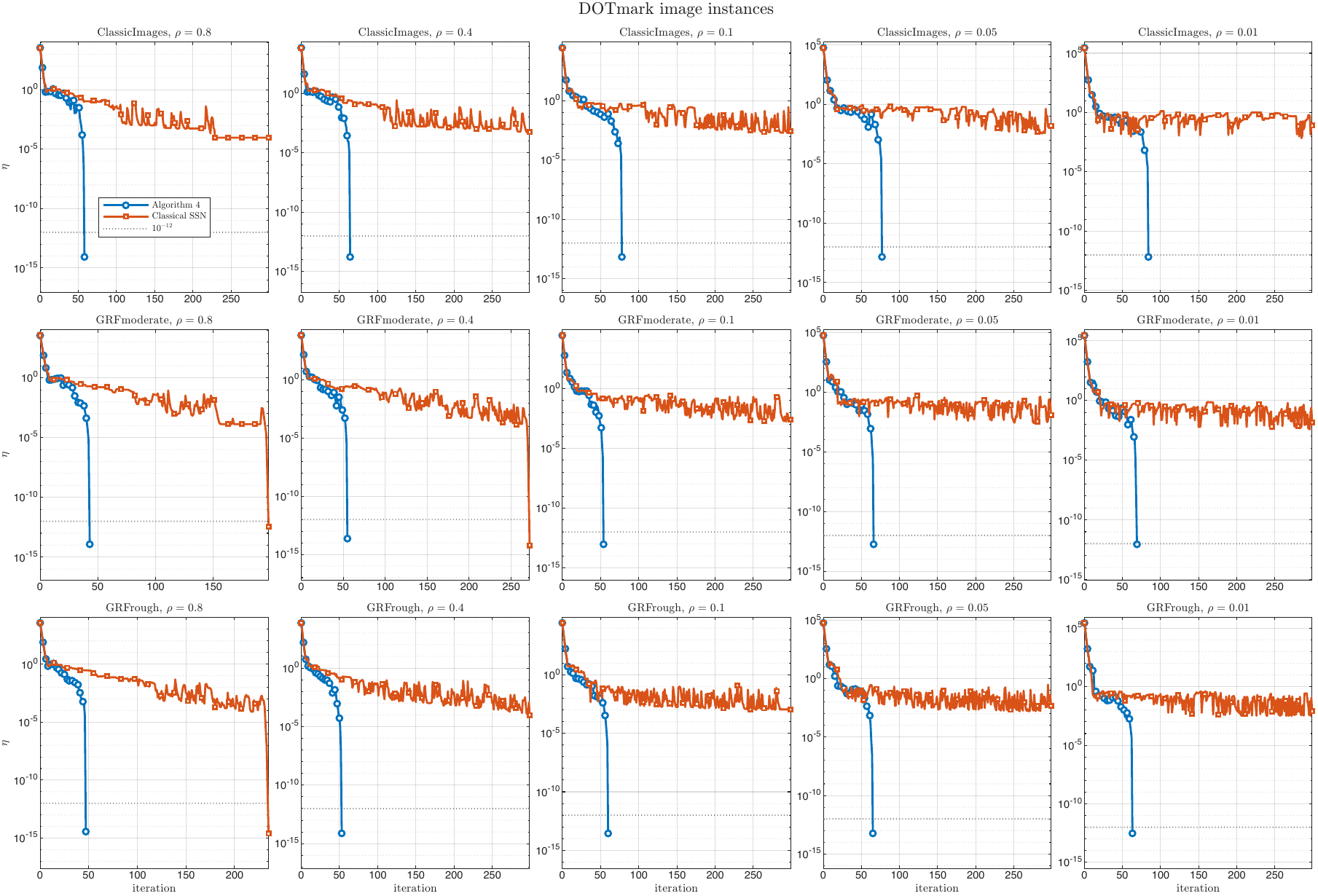}
\caption{Convergence histories of $\eta_{\rm KKT}$ versus the iteration count for DOTmark image instances. Each panel compares Algorithm~\ref{alg:global} (blue) with the classical SSN (orange).}
\label{fig:dotmark-eta}
\end{figure}

Meanwhile, Figure~\ref{fig:dotmark-eta} further highlights the distinct local behavior of the two methods on the DOTmark image instances. The KKT residuals produced by the classical SSN often exhibit pronounced oscillations and stagnate around \({\cal O}(10^{-2})\), with only slow subsequent decrease. This behavior is consistent with the degeneracy of these instances, where the generalized Jacobians selected by the classical SSN can become nearly singular, leading to unstable Newton models and jagged residual histories. By contrast, Algorithm~\ref{alg:global} shows a much more stable local behavior. After steady global progress, it enters a fast local regime in which the residual decreases smoothly and drops by several orders of magnitude within only a few iterations. These observations suggest that the proposed correction strategy effectively stabilizes the local Newton linearization on highly degenerate instances, thereby enabling Algorithm~\ref{alg:global} to compute high-accuracy solutions more reliably.

\subsection{Feasibility restoration in battery scheduling}
\label{sec:num-battery-box}
We next consider a real-data box-constrained projection problem motivated by feasibility restoration in battery scheduling. Energy-storage systems are widely used to absorb surplus renewable generation, discharge during high-demand periods, and mitigate the mismatch between intermittent renewable supply and time-varying demand. In energy management, a physically meaningful storage schedule is typically described by linear state-of-charge dynamics, period-wise power-balance constraints, terminal energy requirements, and lower and upper bounds on charging/discharging power, stored energy, and grid exchange \cite{kraning2014dynamic}. In practice, however, a nominal schedule generated by a simplified model, a forecast-based rule, or a learning-based policy may violate these physical constraints. Projecting such a nominal schedule onto the feasible set therefore provides a natural feasibility-restoration step \cite{mohammadian2025restoring} and leads to a projection problem with linear equality constraints and simple box constraints.

In our experiments, we construct the test instances using the time-series data taken from the Open Power System Data (OPSD) platform
\cite{wiese2019open,opsd2020timeseries}. Let \(T\) be the number of selected time periods.
For each time period \(t=1,\ldots,T\), 
let \(L_t\), \(S_t\), and \(W_t\) denote the original load, solar generation,
and wind generation data. 
Let
$
\bar L=\sum_{t=1}^T L_t/T
$
be the average load over the selected time window. The normalized demand and
the scaled renewable generation are then defined by
\[
d_t=\frac{L_t}{\bar L},
\qquad
r_t=\kappa\frac{S_t + W_t}{\bar L},
\qquad t=1,\ldots,T,
\]
where \(\kappa>0\) is a scaling factor.
Let \(E_{\rm init}\) and \(E_{\rm tar}\) be the initial and terminal target
state of charge of the battery, respectively. The parameters
\(P^{\rm ch}\), \(P^{\rm dis}\), \(E_{\max}\), and \(G_{\max}\) represent the
maximum charging power, maximum discharging power, battery energy capacity, and
maximum grid-import power, respectively. For each time period \(t=1,\ldots,T\),
we use \(p_t\), \(e_t\), \(g_t\), and \(q_t\) to represent the battery power,
state of charge, grid import, and curtailed renewable power. We adopt the sign
convention that \(p_t>0\) corresponds to discharging and \(p_t<0\) corresponds
to charging. Given a possibly
infeasible reference schedule
$(\bar p,\bar e,\bar g,\bar q) \in \mathbb{R}^{T}\times \mathbb{R}^T \times \mathbb{R}^T \times \mathbb{R}^T,
$
we compute its Euclidean projection onto the set of schedules satisfying battery
dynamics, power balance, terminal energy requirement, and physical box bounds:
\begin{equation*}
\begin{array}{ll}
\min & \displaystyle
\frac12\|(p,e,g,q)-(\bar p,\bar e,\bar g,\bar q)\|^2 \\[1mm]
\text{s.t.}
& e_1+ p_1=E_{\rm init},\\
& e_t-e_{t-1}+ p_t=0,\quad t=2,\ldots,T,\\
& p_t+g_t-q_t=d_t-r_t,\quad t=1,\ldots,T,\\
& e_T=E_{\rm tar},\\
& -P^{\rm ch}\le p_t\le P^{\rm dis},\quad t=1,\ldots,T,\\
& 0\le e_t\le E_{\max},\quad 0\le g_t\le G_{\max},\quad
  0\le q_t\le r_t,\quad t=1,\ldots,T.
\end{array}
\end{equation*}
The above problem is clearly an instance of \eqref{eq:gen-box} with \(n=4T\) variables and \(m_E=2T+1\) equality constraints, and the simple box constraint  \(K=[l,u]\).

To construct the reference schedule \((\bar p,\bar e,\bar g,\bar q)\), we use
a noisy net-load-following rule. Specifically, let
\[
{\rm net}_t=d_t-r_t
\]
be the normalized net load. The nominal battery power is generated by
\[
\bar p_t=\theta_{\rm ref} \max\{P^{\rm ch}, P^{\rm dis}\} \tanh({\rm net}_t)+\xi_t^p,
\]
where $\{\xi^p_t\}_{t=1}^T$ are independent small Gaussian perturbations and  \(\theta_{\rm ref}>0\) controls the response intensity of the reference schedule to the net load.
The hyperbolic tangent function  \(\tanh(\cdot)\) is used as a smooth saturation function. It makes the battery tend to discharge when \({\rm net}_t>0\) and charge when \({\rm net}_t<0\), while preventing the reference power from growing unboundedly with the net load.
The remaining components are generated according to 
\[ \bar e_t=E_{\rm init}-\sum_{s=1}^t \bar p_s+\xi_t^e, \qquad \bar g_t={\rm net}_t-\bar p_t+\xi_t^g, \qquad \bar q_t=\max\{-{\rm net}_t,0\}+\xi_t^q . \] The independent Gaussian perturbation terms \(\xi_t^e,\xi_t^g,\xi_t^q\) are added to mimic forecasting and modeling errors.

\begin{table}[htbp]
\centering
\caption{Comparisons between Algorithm \ref{alg:global} and the classical SSN for feasibility restoration instances.}
\label{tab:opsd-battery}
\scriptsize
\begin{tabular}{cccccccccc}
\toprule
\multicolumn{4}{c}{Instance} & \multicolumn{3}{c}{Algorithm~\ref{alg:global}} & \multicolumn{3}{c}{Classical SSN} \\
\cmidrule(lr){1-4}\cmidrule(lr){5-7}\cmidrule(lr){8-10}
mode & $T$ & $n$ & $m_E$ & iter. & $\eta_{\rm KKT}$ & time & iter. & $\eta_{\rm KKT}$ & time \\
\midrule
\multirow{5}{*}{mild}
 & 720  & 2880  & 1441  & 3  & 1.5e-15 & 0.671 & 56      & 5.3e-15 & 0.107 \\
 & 1440 & 5760  & 2881  & 3  & 4.9e-15 & 1.880 & 45      & 1.1e-15 & 0.064 \\
 & 2880 & 11520 & 5761  & 4  & 4.8e-15 & 7.201 & 66      & 5.3e-13 & 0.127 \\
 & 4320 & 17280 & 8641  & 4  & 2.6e-15 & 17.092 & 67      & 2.1e-13 & 0.185 \\
 & 8760 & 35040 & 17521 & 4  & 7.2e-15 & 72.394 & ${\bf 300}$ & {\bf 2.7e-2} & 2.517 \\
\midrule
\multirow{5}{*}{tight}
 & 720  & 2880  & 1441  & 7  & 5.2e-16 & 0.455 & 73      & 7.0e-13 & 0.087 \\
 & 1440 & 5760  & 2881  & 10 & 5.6e-15 & 1.404 & 148     & 2.7e-13 & 0.199 \\
 & 2880 & 11520 & 5761  & 4  & 1.9e-15 & 5.100 & 34      & 5.4e-15 & 0.070 \\
 & 4320 & 17280 & 8641  & 8  & 1.6e-15 & 12.573 & 48      & 1.6e-15 & 0.158 \\
 & 8760 & 35040 & 17521 & 9  & 3.8e-15 & 53.802 & 43      & 3.8e-15 & 0.332 \\
\bottomrule
\end{tabular}
\vspace{0.3em}

\end{table}

In our experiments, we select a contiguous time window
starting from January 1, 2019. If the selected window contains missing entries in
the load, solar generation, or wind generation series, we move the starting time
forward until a clean window of length \(T\) is obtained. Specifically, we test 
\(
T\in\{720, 1440, 2880, 4320, 8760\}.
\) In the tests, we consider two parameter regimes, i.e., the mild and tight regimes. 
The mild regime uses moderate renewable generation and relatively loose limits on battery power, battery capacity, and grid import, resulting in fewer active box constraints. The tight regime, on the other hand, increases renewable generation while reducing battery power and energy capacities, leading to more frequent saturation of operational limits and thus more active box constraints. The detailed parameter values for the two regimes are reported below:
\begin{equation*}
\begin{aligned}
&\mbox{mild regime:} &\kappa=0.8,\qquad
P^{\rm ch}=P^{\rm dis}=0.35,\qquad
E_{\max}=1.20,\qquad
G_{\max}=2.50, 
\\
&\mbox{tight regime:} &\kappa=1.4,\qquad
P^{\rm ch}=P^{\rm dis}=0.18,\qquad
E_{\max}=0.55,\qquad
G_{\max}=2.20. 
\end{aligned}
\end{equation*}
For both regimes, we set the initial  and terminal target state of charge to
$
E_{\rm init}=E_{\rm tar}=E_{\max}/2.
$
In all experiments, we set
\(
\theta_{\rm ref}=2.5
\) so that the nominal schedule may exceed the actual
charging/discharging limits, making the subsequent projection nontrivial.
The perturbation terms are generated independently from centered Gaussian
distributions:
\[
\xi_t^p\sim {\cal N}(0,\sigma_p^2),
\qquad
\xi_t^e\sim {\cal N}(0,\sigma_e^2),
\qquad
\xi_t^g\sim {\cal N}(0,\sigma_g^2),
\qquad
\xi_t^q\sim {\cal N}(0,\sigma_q^2),
\qquad t=1,\ldots,T
\]
with
\(
\sigma_p=0.05,
\sigma_e=\sigma_g=\sigma_q=0.10.
\)
We stop the tested algorithms when $\eta_{\rm KKT} \le 10^{-12}$ or when the iteration limit of $300$ is reached. 
The numerical results are summarized in Table \ref{tab:opsd-battery} and Figure \ref{fig:opsd-battery-eta}.

\begin{figure}[htbp]
\centering
\includegraphics[width=\textwidth]{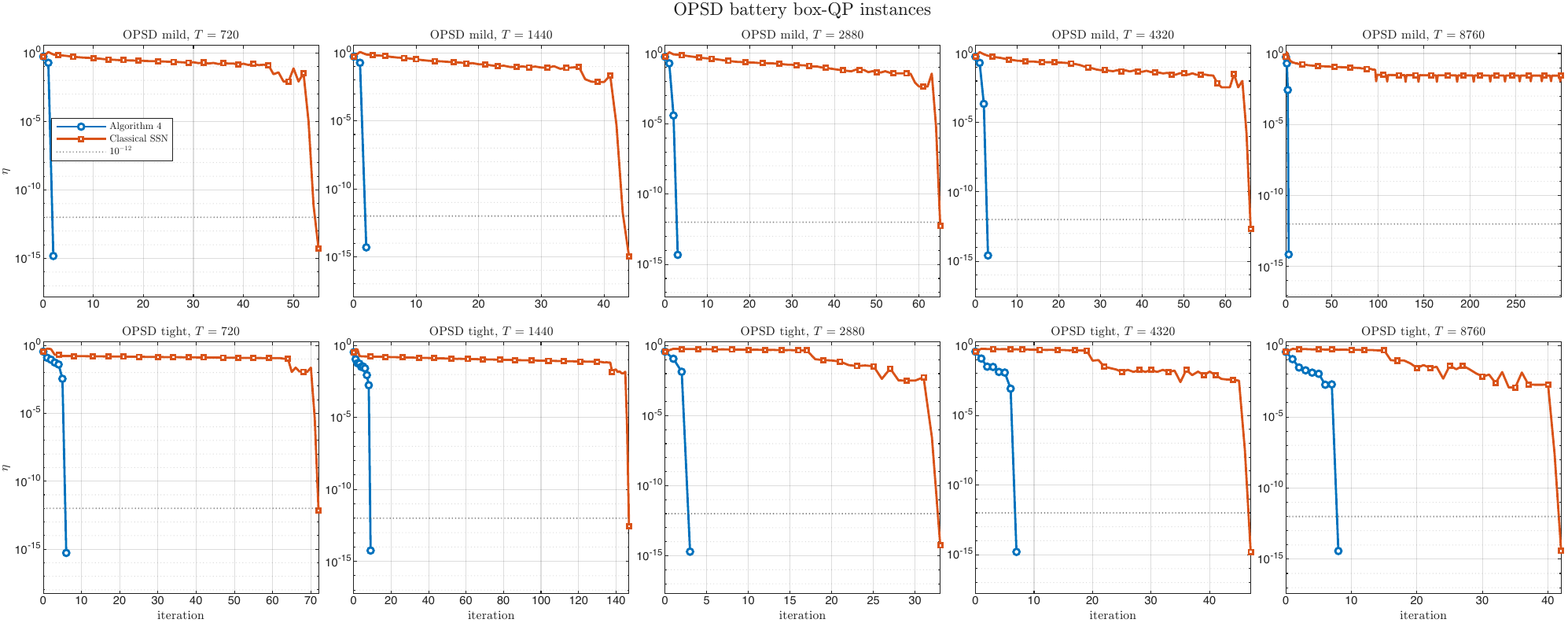}
\caption{Convergence histories of the relative KKT residual $\eta_{\rm KKT}$ versus the iteration count for the feasibility restoration instances. Each panel compares Algorithm~\ref{alg:global} (blue) with the classical SSN (orange).}
\label{fig:opsd-battery-eta}
\end{figure}

As can be observed from Table~\ref{tab:opsd-battery} and Figure~\ref{fig:opsd-battery-eta}, under both the mild and tight settings, Algorithm~\ref{alg:global} requires only 3--10 iterations to compute solutions with terminal KKT residuals of order ${\cal O}(10^{-15})$. This demonstrates clearly its global and fast local convergence. In contrast, the classical SSN generally requires substantially more iterations and fails to reach the target accuracy on the largest instance in the mild regime.

\subsection{Nearest occupation measure in convex MDPs}
We next consider a class of simplex-constrained projection problems arising
from Markov decision processes (MDPs).  Let
\({\cal S}=\{1,\ldots,S\}\) be the state space and
\({\cal A}=\{1,\ldots,M\}\) be the action space.  We write
\[
    x=(x(s,a))_{(s,a)\in{\cal S}\times{\cal A}}
    \in\mathbb R^{SM}
\]
for the normalized discounted occupation measure, i.e., 
\[
x \in \Delta_{SM} := \{x\in\mathbb R^{SM}\mid x\ge 0,\ {\bf 1}_{SM}^\top x=1\}.
\]
The standard discounted
flow-balance equations for MDPs are
\[
    \sum_{a\in{\cal A}}x(s,a)
    -\gamma\sum_{s'\in{\cal S}}\sum_{a'\in{\cal A}}
    P(s\mid s',a')x(s',a')
    =
    (1-\gamma)\nu(s),
    \qquad s\in{\cal S},
\]
where \(\gamma\in(0,1)\) is the discount factor, \(P(\cdot\mid s,a)\)
is the transition probability, and \(\nu\) is the initial distribution. Let \(F\in\mathbb R^{S\times SM}\) denote the full flow-balance matrix.
Its entries are
\[
    F_{s,(s',a')}
    =
    {\bf 1}_{\{s=s'\}}-\gamma P(s\mid s',a').
\]
Since summing all discounted flow-balance equations yields the normalization
condition \({\bf 1}_{SM}^\top x=1\), one flow-balance equation is redundant.  In our formulation,  we drop the last row of $F$, and set
\[
    A= F(1:S-1,:),
    \qquad
    b=(1-\gamma)(\nu(1),\ldots,\nu(S-1))^\top .
\]
With these preparations, we consider the following projection problem arising from apprenticeship learning and convex MDPs \cite{abbeel2004apprenticeship,zahavy2021reward}:
\begin{equation}\label{prob:num-simplex-proj}
\min_{x\in\Rbb^{SM}}
\left\{
\frac{1}{2}\|x - c\|^2
\;\middle|\;
Ax=b,\;\;x\in\Delta_{SM}
\right\},
\end{equation}
where \(c \in \mathbb{R}^{SM}\) is a prescribed target occupancy profile, which may be obtained, for example, from expert demonstrations or empirical occupancy estimates. The quadratic loss measures the deviation of an occupation measure from a prescribed target occupancy profile. Thus, the model seeks an occupation measure, and consequently a policy, that matches the expert's behavior in terms of occupancy statistics~\cite{abbeel2004apprenticeship}.

To further evaluate the performance of our algorithm, we explicitly construct instances of \eqref{prob:num-simplex-proj} where the flow-balance equations and the simplex constraint are linearly dependent at the solution, i.e., the LICQ condition fails. This is achieved by designing the transition probabilities and the reference occupation measure \(\bar{x}\) so that the last state \(S\) is absorbing and receives zero mass under \(\bar{x}\).
Our construction starts by drawing a probability vector
\[
d\in\Rbb^{S-1}_+,\qquad \one_{S-1}^\top d=1.
\]
For every state $s=1,\ldots,S-1$ and every action $a$, the next-state distribution is set to be the same vector $d$ on the first $S-1$ states, while state $S$ receives no mass:
\[
P(i\mid s,a)=d_i,\quad i=1,\ldots,S-1,
\qquad
P(S\mid s,a)=0,
\]
and the last state $S$ is absorbing, i.e.,
\[
P(S\mid S,a)=1,\qquad
P(i\mid S,a)=0,\quad i=1,\ldots,S-1.
\]
Define the reference occupation measure
\[
\bar x(s,a)=\frac{d_s}{M},
\quad s=1,\ldots,S-1,\ a=1,\ldots,M,
\qquad
\bar x(S,a)=0,\quad a=1,\ldots,M.
\]
Then $\bar x\in\Delta_{SM}$. Setting
$
\nu = {F\bar x}/{(1-\gamma)}
$
gives $\nu=(d^\top,0)^\top$ and hence $\bar x$ is feasible for~\eqref{prob:num-simplex-proj}. 
Under this construction, 
it is not difficult to see that $A$ has full row rank because it contains a submatrix $I_{S-1}-\gamma d\one_{S-1}^\top$, whose determinant is $1-\gamma>0$. Moreover, 
one can observe that the LICQ fails at $\bar x$ because the following linear dependence holds:   
\[
\sum_{s=1}^{S-1} F_s
-(1-\gamma)\one_{SM}^\top
+(1-\gamma)\sum_{a=1}^M e_{(S,a)}^\top
=0,
\]
where \(e_{(s,a)}\in\mathbb R^{SM}\) is the
standard basis vector associated with the state--action pair \((s,a)\).
To make $\bar x$ the unique optimal solution to~\eqref{prob:num-simplex-proj}, we first choose a vector $\bar \lambda \in\Rbb^{S-1}$ and a constant $\alpha > 0$, and construct a vector $q$ in the normal cone of $\Delta_{SM}$ at $\bar x$, denoted by $N_{\Delta_{SM}}(\bar x)$, as
\[
q=\alpha\one_{SM}-\mu \in \Rbb^{SM} \, \mbox{ with } \, 
\mu_i=0\ {\rm if}\ \bar x_i>0,\quad
\mu_i\ge0\ {\rm if}\ \bar x_i=0,
\]
and then set
\[
c=\bar x+A^\top \bar \lambda+q.
\]
Then, we have that the following KKT condition
\[
0\in \bar x-c+A^\top \bar \lambda+N_{\Delta_{SM}}(\bar x)
\]
holds, and $\bar x$ is the unique optimal solution.

In our experiments, \(d\in\mathbb{R}^{S-1}\) is obtained by normalizing $S-1$ independent uniform samples, while the entries of 
$\bar \lambda\in\mathbb{R}^{S-1}$ are drawn independently from the standard normal distribution.
To generate \(q\), we set the constant \(\alpha=0.3\), and, for each $i=1,\ldots,SM$, define
\[
\mu_i =
\begin{cases}
0, & \text{if } \bar{x}_i>0,\\
1+\xi_i, & \text{if } \bar{x}_i=0,
\end{cases}
\]
where the \(\xi_i\)'s are independent uniform random variables on \([0,1]\). In our experiments, we sweep over $S, M \in \{20, 30, 500, 1000\}$ and $\gamma \in \{0.8, 0.9, 0.99\}$ to construct $24$ test instances. We stop the tested algorithms when $\eta_{\rm KKT} \le 10^{-10}$ or when the iteration limit of $100$ is reached. We report the results in Table \ref{tab:mdp-licq-selected} and Figure \ref{fig:mdp-licq}.

\begin{table}[htbp]
\centering
\caption{Comparisons between Algorithm~\ref{alg:global} and the classical SSN for LICQ-failure instances.}
\label{tab:mdp-licq-selected}
\scriptsize
\begin{tabular}{cccccccccc}
\toprule
\multicolumn{4}{c}{Instance} & \multicolumn{3}{c}{Algorithm~\ref{alg:global}} & \multicolumn{3}{c}{Classical SSN} \\
\cmidrule(lr){1-4}\cmidrule(lr){5-7}\cmidrule(lr){8-10}
$(S,M)$ & $n$ & $m_E$ & $\gamma$ & iter. & $\eta_{\rm KKT}$ & time & iter. & $\eta_{\rm KKT}$ & time \\
\midrule
\multirow{3}{*}{$(500,20)$} & \multirow{3}{*}{10,000} & \multirow{3}{*}{499}
 & 0.8  & 3 & 3.6e-13 & 46.963 & ${\bf 100}$ & {\bf 4.5e-1} & 4.270 \\
 & & & 0.9  & 3 & 8.7e-13 & 50.549 & ${\bf 100}$ & {\bf 5.5e-1} & 4.123 \\
 & & & 0.99 & 3 & 5.3e-12 & 52.494 & ${\bf 100}$ & {\bf 4.7e-1} & 4.254 \\
\midrule
\multirow{3}{*}{$(1000,20)$} & \multirow{3}{*}{20,000} & \multirow{3}{*}{999}
 & 0.8  & 3 & 1.1e-12 & 515.143 & ${\bf 100}$ & {\bf 5.9e-1} & 10.990 \\
 & & & 0.9  & 3 & 1.8e-12 & 517.864 & ${\bf 100}$ & {\bf 4.4e-1} & 11.154 \\
 & & & 0.99 & 3 & 1.8e-11 & 500.332 & ${\bf 100}$ & {\bf 5.3e-1} & 10.999 \\
\midrule
\multirow{3}{*}{$(500,30)$} & \multirow{3}{*}{15,000} & \multirow{3}{*}{499}
 & 0.8  & 3 & 5.9e-13 & 60.520 & ${\bf 100}$ & {\bf 4.9e-1} & 4.900 \\
 & & & 0.9  & 3 & 9.6e-13 & 66.182 & ${\bf 100}$ & {\bf 4.9e-1} & 4.399 \\
 & & & 0.99 & 3 & 9.0e-12 & 58.695 & ${\bf 100}$ & {\bf 6.3e-1} & 5.838 \\
\midrule
\multirow{3}{*}{$(1000,30)$} & \multirow{3}{*}{30,000} & \multirow{3}{*}{999}
 & 0.8  & 3 & 2.5e-12 & 820.258 & ${\bf 100}$ & {\bf 5.8e-1} & 14.649 \\
 & & & 0.9  & 3 & 2.9e-12 & 833.487 & ${\bf 100}$ & {\bf 7.2e-1} & 14.954 \\
 & & & 0.99 & 3 & 1.0e-11 & 780.591 & ${\bf 100}$ & {\bf 5.4e-1} & 14.685 \\
\midrule
\multirow{3}{*}{$(20,500)$} & \multirow{3}{*}{10,000} & \multirow{3}{*}{19}
 & 0.8  & 3 & 3.6e-13 & 0.062 & ${\bf 100}$ & {\bf 8.9e-1} & 1.350 \\
 & & & 0.9  & 3 & 9.6e-13 & 0.061 & ${\bf 100}$ & {\bf 7.3e-1} & 1.414 \\
 & & & 0.99 & 3 & 8.4e-12 & 0.062 & ${\bf 100}$ & {\bf 6.8e-1} & 1.691 \\
\midrule
\multirow{3}{*}{$(30,500)$} & \multirow{3}{*}{15,000} & \multirow{3}{*}{29}
 & 0.8  & 3 & 1.0e-12 & 0.243 & ${\bf 100}$ & {\bf 9.1e-1} & 2.426 \\
 & & & 0.9  & 3 & 1.3e-12 & 0.241 & ${\bf 100}$ & {\bf 9.5e-1} & 2.340 \\
 & & & 0.99 & 3 & 1.6e-11 & 0.245 & ${\bf 100}$ & {\bf 6.5e-1} & 2.988 \\
\midrule
\multirow{3}{*}{$(20,1000)$} & \multirow{3}{*}{20,000} & \multirow{3}{*}{19}
 & 0.8  & 3 & 5.5e-13 & 0.126 & ${\bf 100}$ & {\bf 8.9e-1} & 3.669 \\
 & & & 0.9  & 3 & 2.3e-12 & 0.123 & ${\bf 100}$ & {\bf 7.3e-1} & 3.155 \\
 & & & 0.99 & 5 & 4.9e-11 & 0.156 & ${\bf 100}$ & {\bf 9.4e-1} & 3.679 \\
\midrule
\multirow{3}{*}{$(30,1000)$} & \multirow{3}{*}{30,000} & \multirow{3}{*}{29}
 & 0.8  & 4 & 8.8e-11 & 0.527 & ${\bf 100}$ & {\bf 9.0e-1} & 5.818 \\
 & & & 0.9  & 3 & 3.3e-12 & 0.484 & ${\bf 100}$ & {\bf 9.2e-1} & 5.618 \\
 & & & 0.99 & 3 & 3.0e-11 & 0.483 & ${\bf 100}$ & {\bf 9.6e-1} & 6.130 \\
\bottomrule
\end{tabular}
\end{table}

As can be observed, for these degenerate problems, Algorithm~\ref{alg:global} solves all the tested instances within 3--5 iterations, whereas the classical SSN suffers from prolonged stagnation. In particular, the classical SSN only returns low-accuracy solutions with terminal KKT residuals between approximately $0.4$ and $1$. These results further confirm the effectiveness of the proposed correction step and the robustness of Algorithm~\ref{alg:global}. 
We also note that the correction mechanism can be substantially more expensive than a classical SSN iteration, especially when $m_E$ is large. For instances with $S=1000$, Algorithm \ref{alg:global} requires approximately 500--830 seconds, whereas the classical SSN takes only about 11--15 seconds, although it terminates at relatively low accuracy. As stated at the beginning of this section, the present experiments primarily demonstrate the reliability of the correction strategy and its ability to attain high-accuracy solutions, rather than uniform superiority in runtime. 

\begin{figure}[htbp]
\centering
\includegraphics[width=\textwidth]{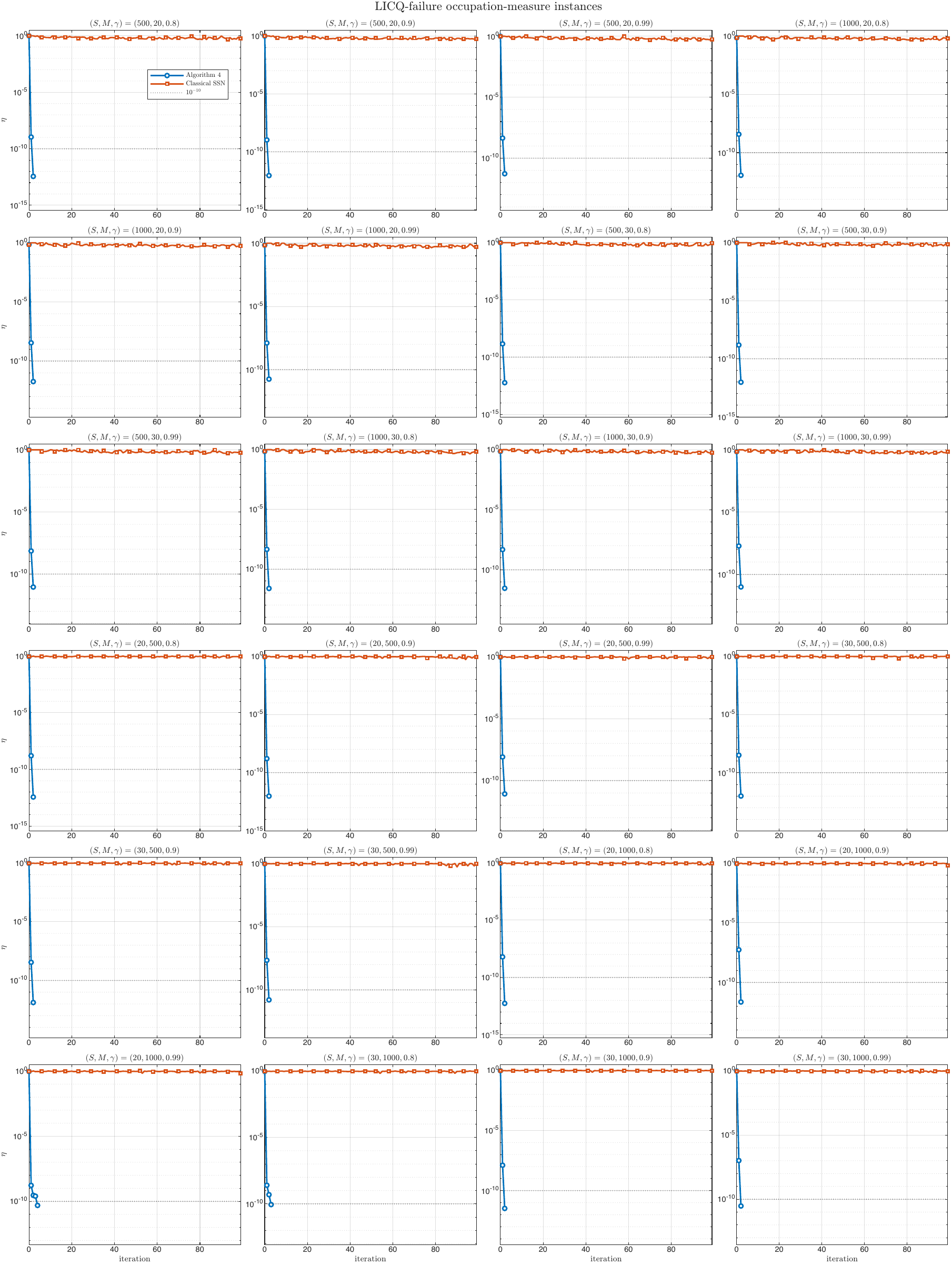}
\caption{Convergence histories of the relative KKT residual $\eta_{\rm KKT}$ versus the iteration count for LICQ-failure instances. Each panel compares Algorithm~\ref{alg:global} (blue) with the classical SSN (orange).}
\label{fig:mdp-licq}
\end{figure}

\section{Conclusion}\label{sec:conclusion}
In this paper, we studied dual semismooth Newton methods for degenerate
polyhedral projection problems. By introducing a primal--dual lifted
projection-equivalent set, we showed that a nonsingular generalized Jacobian of $\nabla \varphi$ can be
constructed at every extreme point of this set. We further established the equivalence among this extreme-point
property, a full-column-rank condition, and a W-SRCQ-type condition, together with
displacement bounds that connect representative selection with the local
Newton analysis. Based on these results, we developed a local inexact semismooth Newton method
and a globalized semismooth Newton method combining monotone
extreme-point identification with a Wolfe line search. Without imposing any a priori nonsingularity or regularity assumption at the optimal
solution, we showed that the local method converges at least superlinearly, and that the globalized method is globally convergent,
eventually accepts full Newton steps, and consequently inherits the same
local superlinear convergence rate. Numerical experiments on
optimal transport, battery-scheduling feasibility restoration, and
occupation-measure projection illustrated the reliability of the proposed
correction mechanism on highly degenerate instances. Developing more efficient structured correction procedures and integrating the proposed mechanism with other fast first- and second-order methods are important directions for future research.


\begin{thebibliography}{10}

\bibitem{abbeel2004apprenticeship}
Pieter Abbeel and Andrew~Y. Ng.
\newblock Apprenticeship learning via inverse reinforcement learning.
\newblock In {\em Proceedings of the Twenty-First International Conference on
  Machine Learning}, pages 1--8, New York, NY, 2004. Association for Computing
  Machinery.

\bibitem{BaiChuSun2007DualIQEP}
Zheng-Jian Bai, Delin Chu, and Defeng Sun.
\newblock A dual optimization approach to inverse quadratic eigenvalue problems
  with partial eigenstructure.
\newblock {\em SIAM Journal on Scientific Computing}, 29(6):2531--2561, 2007.

\bibitem{Bertsekas2016Nonlinear}
Dimitri~P. Bertsekas.
\newblock {\em Nonlinear Programming}.
\newblock Athena Scientific, Belmont, MA, 3rd edition, 2016.

\bibitem{bertsimas1997introduction}
Dimitris Bertsimas and John~N. Tsitsiklis.
\newblock {\em Introduction to Linear Optimization}.
\newblock Number~6 in Athena Scientific Series in Optimization and Neural
  Computation. Athena Scientific, Belmont, MA, 1997.

\bibitem{BioucasDiasEtAl2012}
Jos{\'e}~M. Bioucas-Dias, Antonio Plaza, Nicolas Dobigeon, Mario Parente, Qian
  Du, Paul Gader, and Jocelyn Chanussot.
\newblock Hyperspectral unmixing overview: Geometrical, statistical, and sparse
  regression-based approaches.
\newblock {\em IEEE Journal of Selected Topics in Applied Earth Observations
  and Remote Sensing}, 5(2):354--379, 2012.

\bibitem{BleiNgJordan2003}
David~M. Blei, Andrew~Y. Ng, and Michael~I. Jordan.
\newblock Latent {Dirichlet} allocation.
\newblock {\em Journal of Machine Learning Research}, 3:993--1022, 2003.

\bibitem{BlondelEtAl2018}
Mathieu Blondel, Vivien Seguy, and Antoine Rolet.
\newblock Smooth and sparse optimal transport.
\newblock In {\em Proceedings of the Twenty-First International Conference on
  Artificial Intelligence and Statistics}, volume~84 of {\em Proceedings of
  Machine Learning Research}, pages 880--889. PMLR, 2018.

\bibitem{BonnansShapiro2000}
J.~Fr{\'e}d{\'e}ric Bonnans and Alexander Shapiro.
\newblock {\em Perturbation Analysis of Optimization Problems}.
\newblock Springer, New York, NY, 2000.

\bibitem{chu2023efficient}
Hong T.~M. Chu, Ling Liang, Kim-Chuan Toh, and Lei Yang.
\newblock An efficient implementable inexact entropic proximal point algorithm
  for a class of linear programming problems.
\newblock {\em Computational Optimization and Applications}, 85(1):107--146,
  2023.

\bibitem{CourtyEtAl2017}
Nicolas Courty, R{\'e}mi Flamary, Devis Tuia, and Alain Rakotomamonjy.
\newblock Optimal transport for domain adaptation.
\newblock {\em IEEE Transactions on Pattern Analysis and Machine Intelligence},
  39(9):1853--1865, 2017.

\bibitem{Cuturi2013}
Marco Cuturi.
\newblock {Sinkhorn} distances: Lightspeed computation of optimal transport.
\newblock In {\em Advances in Neural Information Processing Systems},
  volume~26, pages 2292--2300. Curran Associates, Inc., 2013.

\bibitem{Dontchev1998}
Asen~L. Dontchev.
\newblock A proof of the necessity of linear independence condition and strong
  second-order sufficient optimality condition for {Lipschitzian} stability in
  nonlinear programming.
\newblock {\em Journal of Optimization Theory and Applications},
  98(2):467--473, 1998.

\bibitem{facchinei2003finite}
Francisco Facchinei and Jong-Shi Pang.
\newblock {\em Finite-Dimensional Variational Inequalities and Complementarity
  Problems}.
\newblock Springer, New York, NY, 2003.

\bibitem{feng2024quadratically}
Fuxiaoyue Feng, Chao Ding, and Xudong Li.
\newblock A quadratically convergent semismooth {Newton} method for nonlinear
  semidefinite programming without generalized {Jacobian} regularity.
\newblock {\em Mathematical Programming}, 214(1--2):643--683, 2025.

\bibitem{FogelEtAl2013}
Fajwel Fogel, Rodolphe Jenatton, Francis Bach, and Alexandre d'Aspremont.
\newblock Convex relaxations for permutation problems.
\newblock In {\em Advances in Neural Information Processing Systems},
  volume~26, pages 1016--1024. Curran Associates, Inc., 2013.

\bibitem{gabay1983chapter}
Daniel Gabay.
\newblock Applications of the method of multipliers to variational
  inequalities.
\newblock In Michel Fortin and Roland Glowinski, editors, {\em Augmented
  Lagrangian Methods: Applications to the Numerical Solution of Boundary-Value
  Problems}, volume~15 of {\em Studies in Mathematics and Its Applications},
  pages 299--331. North-Holland, Amsterdam, 1983.

\bibitem{glowinski1975approximation}
Roland Glowinski and Americo Marroco.
\newblock Sur l'approximation, par {\'e}l{\'e}ments finis d'ordre un, et la
  r{\'e}solution, par p{\'e}nalisation-dualit{\'e} d'une classe de
  probl{\`e}mes de {Dirichlet} non lin{\'e}aires.
\newblock {\em Revue fran{\c{c}}aise d'automatique, informatique, recherche
  op{\'e}rationnelle. Analyse num{\'e}rique}, 9(R2):41--76, 1975.

\bibitem{goldfarb1983numerically}
Donald Goldfarb and Ashok Idnani.
\newblock A numerically stable dual method for solving strictly convex
  quadratic programs.
\newblock {\em Mathematical Programming}, 27(1):1--33, 1983.

\bibitem{golub2013matrix}
Gene~H. Golub and Charles~F. Van~Loan.
\newblock {\em Matrix Computations}.
\newblock Johns Hopkins University Press, Baltimore, MD, 4th edition, 2013.

\bibitem{HagerZhang2016Projection}
William~W. Hager and Hongchao Zhang.
\newblock Projection onto a polyhedron that exploits sparsity.
\newblock {\em SIAM Journal on Optimization}, 26(3):1773--1798, 2016.

\bibitem{han1997newton}
Jiye Han and Defeng Sun.
\newblock {Newton} and quasi-{Newton} methods for normal maps with polyhedral
  sets.
\newblock {\em Journal of Optimization Theory and Applications},
  94(3):659--676, 1997.

\bibitem{hu2024semismooth}
Hao Hu, Xinxin Li, Haesol Im, and Henry Wolkowicz.
\newblock A semismooth {Newton}-type method for the nearest doubly stochastic
  matrix problem.
\newblock {\em Mathematics of Operations Research}, 49(2):729--751, 2024.

\bibitem{KeshavaMustard2002}
Nirmal Keshava and John~F. Mustard.
\newblock Spectral unmixing.
\newblock {\em IEEE Signal Processing Magazine}, 19(1):44--57, 2002.

\bibitem{kraning2014dynamic}
Matt Kraning, Eric Chu, Javad Lavaei, and Stephen Boyd.
\newblock Dynamic network energy management via proximal message passing.
\newblock {\em Foundations and Trends in Optimization}, 1(2):73--126, 2014.

\bibitem{LellmannEtAl2009}
Jan Lellmann, J{\"o}rg~H. Kappes, Jing Yuan, Florian Becker, and Christoph
  Schn{\"o}rr.
\newblock Convex multi-class image labeling by simplex-constrained total
  variation.
\newblock In {\em Scale Space and Variational Methods in Computer Vision},
  volume 5567 of {\em Lecture Notes in Computer Science}, pages 150--162.
  Springer, 2009.

\bibitem{li1995error}
Wu~Li.
\newblock Error bounds for piecewise convex quadratic programs and
  applications.
\newblock {\em SIAM Journal on Control and Optimization}, 33(5):1510--1529,
  1995.

\bibitem{li2018efficiently}
Xudong Li, Defeng Sun, and Kim-Chuan Toh.
\newblock On efficiently solving the subproblems of a level-set method for
  fused lasso problems.
\newblock {\em SIAM Journal on Optimization}, 28(2):1842--1866, 2018.

\bibitem{LiSunToh2020BirkhoffJacobian}
Xudong Li, Defeng Sun, and Kim-Chuan Toh.
\newblock On the efficient computation of a generalized {Jacobian} of the
  projector over the {Birkhoff} polytope.
\newblock {\em Mathematical Programming}, 179(1--2):419--446, 2020.

\bibitem{LorenzMannsMeyer2021}
Dirk~A. Lorenz, Paul Manns, and Christian Meyer.
\newblock Quadratically regularized optimal transport.
\newblock {\em Applied Mathematics \& Optimization}, 83(3):1919--1949, 2021.

\bibitem{Malick2004DualSDLS}
J{\'e}r{\^o}me Malick.
\newblock A dual approach to semidefinite least-squares problems.
\newblock {\em SIAM Journal on Matrix Analysis and Applications},
  26(1):272--284, 2004.

\bibitem{mohammadian2025restoring}
Mostafa Mohammadian, Anna Van~Boven, and Kyri Baker.
\newblock Restoring feasibility in power grid optimization: A counterfactual
  {ML} approach.
\newblock In {\em 2025 {IEEE} International Conference on Communications,
  Control, and Computing Technologies for Smart Grids ({SmartGridComm})}, pages
  1--6. IEEE, 2025.

\bibitem{nesterov2013introductory}
Yurii Nesterov.
\newblock {\em Introductory Lectures on Convex Optimization: A Basic Course},
  volume~87 of {\em Applied Optimization}.
\newblock Springer, New York, NY, 2004.

\bibitem{nesterov1994interior}
Yurii Nesterov and Arkadii Nemirovskii.
\newblock {\em Interior-Point Polynomial Algorithms in Convex Programming}.
\newblock SIAM, Philadelphia, PA, 1994.

\bibitem{NocedalWright2006}
Jorge Nocedal and Stephen~J. Wright.
\newblock {\em Numerical Optimization}.
\newblock Springer Series in Operations Research and Financial Engineering.
  Springer, New York, NY, 2nd edition, 2006.

\bibitem{nutz2025quadratically}
Marcel Nutz.
\newblock Quadratically regularized optimal transport: Existence and
  multiplicity of potentials.
\newblock {\em SIAM Journal on Mathematical Analysis}, 57(3):2622--2649, 2025.

\bibitem{opsd2020timeseries}
{Open Power System Data}.
\newblock Data package time series.
\newblock \url{https://data.open-power-system-data.org/time_series/}, 2020.
\newblock Version 2020-10-06.

\bibitem{pang1995globally}
Jong-Shi Pang and Liqun Qi.
\newblock A globally convergent {Newton} method for convex {$SC^1$}
  minimization problems.
\newblock {\em Journal of Optimization Theory and Applications},
  85(3):633--648, 1995.

\bibitem{PeyreCuturi2019}
Gabriel Peyr{\'e} and Marco Cuturi.
\newblock Computational optimal transport.
\newblock {\em Foundations and Trends in Machine Learning}, 11(5--6):355--607,
  2019.

\bibitem{Qi1993convergence}
Liqun Qi.
\newblock Convergence analysis of some algorithms for solving nonsmooth
  equations.
\newblock {\em Mathematics of Operations Research}, 18(1):227--244, 1993.

\bibitem{qi1993nonsmooth}
Liqun Qi and Jie Sun.
\newblock A nonsmooth version of {Newton}'s method.
\newblock {\em Mathematical Programming}, 58(1):353--367, 1993.

\bibitem{robinson1981some}
Stephen~M. Robinson.
\newblock Some continuity properties of polyhedral multifunctions.
\newblock In Heinz K{\"o}nig, Bernhard Korte, and Klaus Ritter, editors, {\em
  Mathematical Programming at Oberwolfach}, volume~14 of {\em Mathematical
  Programming Studies}, pages 206--214. Springer, Berlin, 1981.

\bibitem{rockafellar1997convex}
R.~Tyrrell Rockafellar.
\newblock {\em Convex Analysis}, volume~28 of {\em Princeton Mathematical
  Series}.
\newblock Princeton University Press, Princeton, NJ, 1970.

\bibitem{SchootUiterkampHurinkGerards2021}
Martijn H.~H. Schoot~Uiterkamp, Johann~L. Hurink, and Marco E.~T. Gerards.
\newblock A fast algorithm for quadratic resource allocation problems with
  nested constraints.
\newblock {\em Computers \& Operations Research}, 135:105451, 2021.

\bibitem{schrieber2016dotmark}
J{\"o}rn Schrieber, Dominic Schuhmacher, and Carsten Gottschlich.
\newblock {DOTmark}---a benchmark for discrete optimal transport.
\newblock {\em IEEE Access}, 5:271--282, 2017.

\bibitem{ShiouraShakhlevichStrusevich2016}
Akiyoshi Shioura, Natalia~V. Shakhlevich, and Vitaly~A. Strusevich.
\newblock Application of submodular optimization to single machine scheduling
  with controllable processing times subject to release dates and deadlines.
\newblock {\em INFORMS Journal on Computing}, 28(1):148--161, 2016.

\bibitem{Tamir1980}
Arie Tamir.
\newblock Efficient algorithms for a selection problem with nested constraints
  and its application to a production-sales planning model.
\newblock {\em SIAM Journal on Control and Optimization}, 18(3):282--287, 1980.

\bibitem{VidalGribelJaillet2019}
Thibaut Vidal, Daniel Gribel, and Patrick Jaillet.
\newblock Separable convex optimization with nested lower and upper
  constraints.
\newblock {\em INFORMS Journal on Optimization}, 1(1):71--90, 2019.

\bibitem{WangShenZhangYang2023DGASS}
Yunlong Wang, Chungen Shen, Lei-Hong Zhang, and Wei~Hong Yang.
\newblock Proximal gradient/semismooth {Newton} methods for projection onto a
  polyhedron via the {Duality-Gap-Active-Set} strategy.
\newblock {\em Journal of Scientific Computing}, 97(1):3, 2023.

\bibitem{wiese2019open}
Frauke Wiese, Ingmar Schlecht, Wolf-Dieter Bunke, Clemens Gerbaulet, Lion
  Hirth, Martin Jahn, Friedrich Kunz, Casimir Lorenz, Jonathan
  M{\"u}hlenpfordt, Juliane Reimann, and Wolf-Peter Schill.
\newblock {Open Power System Data}---frictionless data for electricity system
  modelling.
\newblock {\em Applied Energy}, 236:401--409, 2019.

\bibitem{wright1997primal}
Stephen~J. Wright.
\newblock {\em Primal--Dual Interior-Point Methods}.
\newblock SIAM, Philadelphia, PA, 1997.

\bibitem{zahavy2021reward}
Tom Zahavy, Brendan O'Donoghue, Guillaume Desjardins, and Satinder~P. Singh.
\newblock Reward is enough for convex {MDP}s.
\newblock In {\em Advances in Neural Information Processing Systems},
  volume~34, pages 25746--25759. Curran Associates, Inc., 2021.

\bibitem{zhao2010newton}
Xin-Yuan Zhao, Defeng Sun, and Kim-Chuan Toh.
\newblock A {Newton--CG} augmented {Lagrangian} method for semidefinite
  programming.
\newblock {\em SIAM Journal on Optimization}, 20(4):1737--1765, 2010.

\end{thebibliography}
\end{document}